\documentclass{article}
\usepackage[english]{babel}
\usepackage{float}
\usepackage{amsmath, amssymb, amsthm, epsfig, color, mathtools, bbm, dsfont, mathrsfs, bm}
\usepackage[latin1]{inputenc}
\usepackage[T1]{fontenc}
\usepackage{indentfirst}
\usepackage[shortlabels]{enumitem}
\usepackage[title]{appendix}
\usepackage{lmodern}       
\usepackage{comment}  
\usepackage{textcomp, upgreek}
\usepackage{setspace}
\usepackage{soul}
\usepackage{accents}
\usepackage{hyperref}
\usepackage{geometry}
\usepackage{xcolor}
\allowdisplaybreaks[4]

\usepackage{xr}   
\usepackage{graphicx} 
\usepackage[export]{adjustbox}
\usepackage{subcaption} 
\usepackage{mathtools} 
\usepackage{siunitx}
\usepackage{bm} 
\usepackage{comment}

\usepackage{addfont}


\usepackage{tikz, tkz-euclide}
\usetikzlibrary{patterns}
\usetikzlibrary{arrows.meta}
\usetikzlibrary{calc}
\usetikzlibrary{decorations.markings}
\usepackage{fancybox}


\usepackage{graphicx}
\graphicspath{ {./images/} }
\newcommand{\Z}{\mathbb{Z}}

\newcommand{\lab}{\mathrm{lab}}

\newcommand{\I}{\mathrm{I}}
\newcommand{\Sp}{\mathrm{sp}}

\newcommand{\C}{\mathscr{C}}

\newcommand{\vareps}{\varepsilon}
\newcommand{\cA}{\mathcal{A}}
\newcommand{\B}{\mathrm{B}}
\newcommand{\Int}{\mathrm{Int}}
\def\fInt{\mathfrak{Int}}

\def\d{\mathop{\textrm{\rm d}}\nolimits}                  
\def\sign{\mathop{\textrm{\rm sign}}\nolimits}                  
\def\P{\mathbb{P}}
\def\ter{\textcolor{red}}
\def\teb{\textcolor{blue}}

\newcommand{\be}{\begin{equation}}
\newcommand{\ee}{\end{equation}}

\numberwithin{equation}{section}

  \newcounter{dummy} \numberwithin{dummy}{section}
  \theoremstyle{plain}
  \newtheorem*{theorem*}        {Theorem}
	\newtheorem*{conjecture*}   {Conjecture}
  \newtheorem{theorem}[dummy]          {Theorem}
  \newtheorem{lemma}[dummy]              {Lemma}
  \newtheorem*{lemma*}          {Lemma}
    \newtheorem{claim}[dummy]         {Claim}
  \newtheorem{corollary}[dummy]           {Corollary}
  \newtheorem{proposition}[dummy]       {Proposition}
   
  \newtheorem{remark}[dummy]           {Remark}
  
  \theoremstyle{remark}
  \theoremstyle{definition}
   \newtheorem{definition}[dummy]          {Definition}
\makeatletter

\newcommand\longleftrightarrowfill@{%
  \arrowfill@\leftarrow\relbar\rightarrow}
\makeatother

%
%
\definecolor{Red}{cmyk}{0,1,1,0}

\definecolor{Blue}{cmyk}{1,1,0,0}

\definecolor{DarkBlue}{rgb}{0.1,0.1,0.5}
\definecolor{Red}{rgb}{0.9,0.0,0.1}
\definecolor{DarkGreen}{rgb}{0.10,0.50,0.10}
\definecolor{DarkRed}{rgb}{0.50,0.10,0.10}
\definecolor{bleu}{RGB}{0,140,189}%





\DeclarePairedDelimiter\ceil{\lceil}{\rceil} 

\begin{document}

\begin{center}
{\LARGE Phase transitions in low-dimensional long-range random field Ising models}
\vskip.5cm
Jian Ding$^{1}$, Fenglin Huang$^{1}$, Jo{\~a}o Maia$^{2}$
\vskip.3cm
\begin{footnotesize}
$^{1}$ School of Mathematical Sciences, Peking University, China\\
$^{2}$ Beijing International Center for Mathematical Research, Peking University, China\\
\end{footnotesize}
\vskip.1cm
\end{center}

\begin{abstract}
We consider the long-range random field Ising model in dimension $d = 1, 2$, whereas the long-range interaction is of the form $J_{xy} = |x-y|^{-\alpha}$ with $1< \alpha < 3/2$ for $d=1$ and with $2 < \alpha \leq 3$ for $d = 2$. Our main results establish phase transitions in these regimes. In one dimension, we employ a Peierls argument with some novel modification, suitable for dealing with the randomness coming from the external field; in two dimensions, our proof follows that of Affonso, Bissacot, and Maia (2023) with some adaptations, but new ideas are required in the critical case of $\alpha=3$. 
\end{abstract}

\section{Introduction}

The Ising model is one of the most studied models in statistical mechanics. In this paper, we work with the combination of two variations of the Ising model, the random field and the long-range models. The long-range variation, introduced first by Kac and Thompson \cite{Kac_Thompson_69}, considers a model on the lattice $\Z^d$ with the same configuration space $\Omega \coloneqq \{-1, +1\}^{\Z^d}$ (as in the nearest-neighbor case), but with a long-range interaction  $\{J_{xy}\}_{x,y\in\Z^d}$ given by 
\begin{equation}\label{Long-Range Interaction}
  J_{xy} = \begin{cases}
                 \frac{1}{|x-y|^\alpha} &\text{ if }x\neq y,\\
                 0                &\text{otherwise,} 
            \end{cases}
\end{equation}
with $\alpha>d$ ensuring that the model is regular. A second variation, the random field Ising model (RFIM), is the usual Ising model with the addition of a random external field. Our model of interest is the long-range random field Ising model, which has a \textit{local Hamiltonian} given by
\begin{equation}\label{Eq: Hamiltonian}
  H_{\Lambda; \varepsilon h}^\eta(\sigma) \coloneqq -\sum_{x,y\in\Lambda} J_{xy}\sigma_x\sigma_y - \sum_{x\in \Lambda, y\in\Lambda^c} J_{xy}\sigma_x\eta_y - \sum_{x\in\Lambda} \varepsilon h_x\sigma_x,
\end{equation}
where $\Lambda\subset\Z^d$ is finite, $\eta\in\Omega$ is the \textit{boundary condition}, $\{h_x\}_{x\in\Z^d}$ is a collection of i.i.d. Gaussian random variables in a probability space denoted as $(\widetilde{\Omega}, \mathcal{A}, \mathbb{P})$ and the parameter $\varepsilon >0$ controls the variance of the external field. 
As in the usual Ising model, the local Gibbs measure is a probability measure given by \begin{equation*}
  \mu_{\Lambda;\beta, \varepsilon h}^\eta(\sigma) \coloneqq \mathbbm{1}_{\Omega_\Lambda^\eta}(\sigma)\frac{e^{-\beta H_{\Lambda; \varepsilon h}^{\eta}(\sigma)}}{Z_{\Lambda; \beta, \varepsilon}^{\eta}(h)},
\end{equation*}
where $Z_{\Lambda; \beta, \varepsilon}^{\eta}(h)$ is a normalizing constant and $\Omega_\Lambda^\eta \coloneqq \{\omega\in\Omega : \omega_x=\eta_x, \forall x\in\Lambda^c\}$ is the collection of the configurations matching $\eta$ outside $\Lambda$. For the extremal boundary conditions $\eta \equiv +1$ ($\eta_{x}= +1, \forall x \in \mathbb{Z}^{d})$, and analogously, $\eta \equiv -1$, the weak*-limits
\begin{equation*}
    \mu_{\beta,\varepsilon h}^{\pm}[\sigma] \coloneqq \lim_{N\to\infty} \mu_{[-N,N]^d;\beta, \varepsilon h}^{\pm}[\sigma],
\end{equation*}
are well-defined (see \cite[Theorem 7.2.2]{Bovier.06})  and are our main objects of study. When the limiting measures are distinct, i.e., $\mu^+_{\beta, \epsilon h} \neq \mu^-_{\beta, \varepsilon h}$ $\mathbb{P}$-a.s. for some $\beta>0$, we say there is \textit{phase transition}, also called \textit{long-range order}.    

Both the long-range model and the RFIM have a rich history in statistical physics. Determining for which dimensions $d$ the RFIM presents a phase transition was a topic of heated debate in the physics community until it was settled in the seminal works of Aizenmann and Wehr \cite{Aizenman.Wehr.90} and Bricmont and Kupiainen \cite{Bricmont.Kupiainen.88}. Aizenmann and Wehr showed that the model has only one Gibbs measure in dimensions $d=1,2$, while Bricmont and Kupiainen proved phase transition for $d\geq 3$. Several interesting phenomena occur in the RFIM, and interesting results were obtained in recent years: we mention for example \cite{Aizenman_Harel_Peled_20, AP19, Cha18, ding2023phase, Ding_Wirth_23, Ding_Xia_21} for results in two dimensions, and \cite{Ding_Liu_Xia_24, Ding2021} for three and higher dimensions.

For the long-range model in one dimension, Dobrushin and Ruelle proved that there is no phase transition if $\alpha>2$ \cite{Dobrushin69, Ruelle68}. On the other hand, Kac and Thompson \cite{Kac_Thompson_69} conjectured that the long-range interaction should induce a phase transition, different from the Ising model. This was proved by Dyson \cite{Dyson.69} for $1<\alpha<2$; for this reason, the model is also known as the \textit{Dyson model}. For the critical exponent $\alpha=2$, phase transition was proved in the seminal work of Fr\"ohlich and Spencer \cite{Frohlich.Spencer.82}, where the authors introduced a notion of long-range one-dimensional contours.  This contour system was extended to the multidimensional setting by Affonso, Bissacot, Endo and Handa \cite{affonso_24}, where the authors proved phase transition at low temperatures, even in the presence of a decaying field.  

For the long-range random field Ising model, very little was known until very recently. In low dimensions, the same work of Aizenman and Wehr \cite{Aizenman.Wehr.90} proved uniqueness for $d=1,2$ and $3d/2<\alpha$, and in one dimension they also proved the uniqueness for the critical case of $\alpha=3/2$. Phase transition for the nearest-neighbor Ising model with no external field implies phase transition for the long-range model, again with no external field, by standard correlation inequalities. However, a different argument is required for models with a random field, since some correlation inequalities are lost in this setting.  A recent work of Affonso, Bissacot, and Maia \cite{Affonso_Bissacot_Maia_23} proved phase transition for dimensions $d\geq 3$ in the entire region $\alpha>d$. In one dimension, the only known result prior to this work was proved by Cassandro, Orlandi, and Picco \cite{Cassandro.Picco.09}, with the extra restrictions that $3-\frac{\ln 3}{\ln 2} < \alpha < 3/2$ and $J(1)$ large enough, where $J(1)$ is the nearest-neighbor interaction.

A natural follow-up question is if the restriction of $J(1)$ being large can be lifted since this is not required in the long-range model with no field. Another question of interest, as pointed out by \cite{Aizenman_Greenblatt_Lebowitz_2012}, is if $\alpha = 3$ is indeed the critical exponent for phase transition in two dimensions in the presence of a random field. In this paper we answer both these questions, completing the picture of phase transition. 

For the long-range random field Ising model, we prove phase transition in one dimension for $1<\alpha<3/2$, without the assumption of $J(1)$ being large. We also show that there is a phase transition in two dimensions for all $2<\alpha\leq 3$, including the critical exponent $\alpha = 3$. Notice that this is distinct from the one-dimensional critical case $\alpha = 3/2$, where there is uniqueness at all temperatures. Our results can be summarized in the next two theorems. 

\begin{theorem}\label{thm-main} Fix $d=1$ and $1<\alpha<\frac{3}{2}$. For any constant $c_1>0$,
there exist constants $\vareps_1\coloneqq \vareps_1(\alpha,c_1)>0$ and $\beta_1\coloneqq \beta_1(\alpha, c_1)>0$ such that for any $\varepsilon < \varepsilon_1$ and any $\beta>\beta_1$, the following holds with $\mathbb P$-probability at least $1-c_1$:
  \begin{equation*}
    \mu^+_{\beta, \varepsilon h}(\sigma_0=-1)<c_1.
  \end{equation*}
\end{theorem}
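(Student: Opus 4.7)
The plan is to run a Peierls argument on the long-range contours developed for the one-dimensional Dyson-type model, combined with a quenched control of the external field that is tailored to the regime $\alpha<3/2$. The starting point is the contour representation of Fröhlich--Spencer, as extended in \cite{affonso_24}: under $+$ boundary conditions, any configuration with $\sigma_0=-1$ gives rise to at least one contour $\gamma$ surrounding $0$ with interior $\Int(\gamma)\subset\Z$, and the surface energy cost of such a contour in the $J_{xy}=|x-y|^{-\alpha}$ model is of order $|\Int(\gamma)|^{2-\alpha}$. Starting from this, the flipping map $\sigma\mapsto\sigma^{\Int(\gamma)}$ yields the deterministic bound
\begin{equation*}
\mu^+_{\beta,\varepsilon h}(\sigma_0=-1)\;\le\;\sum_{\gamma\ni 0}\exp\!\bigl(-c\beta\,|\Int(\gamma)|^{2-\alpha}+2\beta\varepsilon\, F(\gamma)\bigr),\qquad F(\gamma):=\sum_{x\in\Int(\gamma)}h_x.
\end{equation*}

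The obstruction to a standard annealed Peierls argument is that $F(\gamma)$ is Gaussian with variance $|\Int(\gamma)|$, so $\mathbb E[e^{2\beta\varepsilon F(\gamma)}]=e^{2\beta^2\varepsilon^2|\Int(\gamma)|}$ scales \emph{linearly} in $|\Int(\gamma)|$ and overwhelms the energy gain $|\Int(\gamma)|^{2-\alpha}$ whenever $\alpha>1$. A quenched version is therefore necessary, and the restriction $\alpha<3/2$ enters precisely because the \emph{typical} Gaussian fluctuation $\sqrt{|\Int(\gamma)|}$ is dominated by $|\Int(\gamma)|^{2-\alpha}$ exactly when $2-\alpha>1/2$.

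To exploit this, I would introduce a good event $\mathcal{G}\subset\widetilde{\Omega}$ on which the accumulated field over every admissible contour interior is controlled by its typical Gaussian scale, up to a logarithm:
\begin{equation*}
\mathcal{G}\;=\;\bigl\{|F(I)|\le C_0\sqrt{|I|}\,\log^{1/2}(|I|+1)\ \text{for every }I\subset\Z\text{ that can occur as }\Int(\gamma)\text{ with }\gamma\ni 0\bigr\}.
\end{equation*}
Using Gaussian concentration together with an entropy bound on the admissible interiors of size $L$ supplied by the one-dimensional long-range contour machinery (this entropy grows at most sub-exponentially in $L$), one shows that $\mathbb{P}(\mathcal{G})\ge 1-c_1$ for $C_0$ large enough. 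On $\mathcal{G}$, the Peierls sum becomes
\begin{equation*}
\sum_{\gamma\ni 0}\exp\!\Bigl(-c\beta\,|\Int(\gamma)|^{2-\alpha}+C_0\beta\varepsilon\,|\Int(\gamma)|^{1/2}\log^{1/2}(|\Int(\gamma)|+1)\Bigr),
\end{equation*}
and since $2-\alpha>1/2$, taking $\varepsilon$ small and $\beta$ large makes this smaller than $c_1$ uniformly for $h\in\mathcal{G}$; the sub-exponential contour entropy is easily absorbed into the $\beta|\Int(\gamma)|^{2-\alpha}$ term. Combining the two estimates gives the theorem.

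The main obstacle I anticipate is the delicate interplay between the two union bounds, both of which require sharp entropy bounds on contour shapes in a setting where the interiors need not be single intervals but can be unions of many blocks living on many different scales. The "novel modification" hinted at in the abstract is presumably a hierarchical or multi-scale grouping of the contour pieces so that $F$ can be controlled block by block and scale by scale without losing the narrow margin $2-\alpha-1/2>0$. If one is not careful, each scale contributes a logarithmic or polylogarithmic factor that can spoil the estimate near the critical exponent $\alpha=3/2$; arranging the decomposition so that the exponent $2-\alpha$ remains strictly above $1/2$ after summing across scales is the technical heart of the argument.
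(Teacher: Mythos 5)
Your high-level strategy is aligned with the paper's philosophy—run a Peierls argument, control the field on a good event where the fluctuations scale as $\sqrt{|A|}$, and exploit the margin $2-\alpha>1/2$—but there are two substantive gaps in the proposal, and the actual route the paper takes is quite different from the contour machinery you invoke.

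First, the deterministic bound you write down,
\[
\mu^+_{\beta,\varepsilon h}(\sigma_0=-1)\;\le\;\sum_{\gamma\ni 0}\exp\!\bigl(-c\beta\,|\Int(\gamma)|^{2-\alpha}+2\beta\varepsilon\, F(\gamma)\bigr),\qquad F(\gamma)=\sum_{x\in\Int(\gamma)}h_x,
\]
is not valid as stated. After you flip the spins in $\Int(\gamma)$, the change in the field term is $-2\varepsilon\sum_{x\in\Int(\gamma)}h_x\sigma_x$, which depends on the (unknown) interior configuration $\sigma|_{\Int(\gamma)}$, not merely on $\gamma$ and $h$. You cannot bound this by $\pm 2\varepsilon F(\gamma)$ without absolute values, and once you take absolute values the bound is too weak. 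The standard fix, and what the paper actually does (following \cite{Ding2021}), is to map $(\sigma,h)\mapsto(\tau_{A_\sigma}\sigma,\tau_{A_\sigma}h)$ simultaneously, so the field term transports exactly, and to compare partition functions via the error function $\Delta_A(h)=-\beta^{-1}\log\bigl(Z^+_\Lambda(h)/Z^+_\Lambda(\tau_A h)\bigr)$ (equation \eqref{Eq: Delta}). The subgaussian concentration of $\Delta_A$ (Lemma~\ref{Lemma: Concentration.for.Delta.General}) is what replaces your naive control of $F$.

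Second, the entropy of admissible interiors is where the real difficulty lies, and you assert rather than establish that it grows subexponentially. The paper explicitly declines the route you propose---using Fr\"ohlich--Spencer-type contours (as extended in \cite{affonso_24,Affonso_Bissacot_Corsini_Welsch_24})---because extending the associated entropy counting to $d=1$ is, in the authors' words, ``possible but not trivial.'' Instead they build a genuinely different Peierls map: the balancing procedure of Section~\ref{sec: balancing procedure}, which iteratively erases ``isolated'' intervals based on local densities of pluses and minuses rather than spin flips, and produces a balanced configuration to which Lemma~\ref{Lemma: interacton_2} and the multiscale coarse-graining $\Psi_\ell$ (Definition~\ref{Def: Definition of Psi}, Proposition~\ref{prop: entropy argument}) apply. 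The exponent $\theta=\min\{2-\alpha-10\delta,\log_{3.9}2\}>1/2$ that governs both the energy gain and the entropy bound is the quantitative output of this construction, and getting it strictly above $1/2$ (to beat the $\sqrt{|A|}$ field scale) is exactly where your proposed logarithmic slack per scale would be dangerous. So the ``novel modification'' you guess at is correct in spirit---it is a multiscale, block-by-block control---but the actual implementation abandons the contour system you are leaning on in favor of the balancing procedure, and without that (or a worked-out 1d contour entropy bound) the argument does not close.
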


\begin{theorem}\label{thm-main-2d} Fix $d=2$ and $2<\alpha\le3$.  For any constant $c_2>0$,
there exist constants $\vareps_2\coloneqq \vareps_2(\alpha,c_2)>0$ and $\beta_2\coloneqq \beta_2(\alpha, c_2)>0$ such that for any $\varepsilon < \varepsilon_2$ and any $\beta>\beta_2$, the following holds with $\mathbb P$-probability at least $1-c_2$: 
  \begin{equation*}
    \mu^+_{\beta, \varepsilon h}(\sigma_0=-1)<c_2.
  \end{equation*}
\end{theorem}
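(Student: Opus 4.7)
The plan is to implement a Peierls-type contour argument, adapting the scheme that Affonso, Bissacot, and Maia developed for the long-range RFIM in $d\ge 3$ to the two-dimensional setting, with a genuinely new input required at the critical exponent $\alpha=3$. The starting point is a long-range contour representation (in the spirit of Affonso-Bissacot-Endo-Handa): every configuration $\sigma$ with $+$ boundary is decomposed into a family of contours $\gamma$ with well-defined supports $\s(\gamma)$ and interiors $V(\gamma)$, and the event $\{\sigma_0=-1\}$ forces the existence of a contour $\gamma$ with $0\in V(\gamma)$. A spin-flip comparison, flipping all spins inside $V(\gamma)$, yields the Peierls-type bound
\begin{equation*}
  \mu^+_{\Lambda;\beta,\vareps h}\!\left(\gamma\text{ is a contour of }\sigma\right)\le\exp\!\left(-2\beta E(\gamma)+2\beta\vareps\sum_{x\in V(\gamma)}h_x\right),
\end{equation*}
where $E(\gamma):=\sum_{x\in V(\gamma),\,y\notin V(\gamma)}J_{xy}$ is the long-range surface tension of the contour.

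The next step is to quantify $E(\gamma)$. For a contour whose interior has diameter $R$ in $\Z^2$, a direct estimate of $\sum_{x\in V,\,y\notin V}|x-y|^{-\alpha}$ yields
\begin{equation*}
  E(\gamma)\gtrsim\begin{cases} R^{4-\alpha}, & 2<\alpha<3,\\ R\log R, & \alpha=3. \end{cases}
\end{equation*}
Both strictly exceed the $2$D perimeter $R$, polynomially for $\alpha<3$ but only by a logarithmic factor at $\alpha=3$. Meanwhile $\sum_{V(\gamma)}h_x$ is Gaussian with standard deviation $\sqrt{|V(\gamma)|}\lesssim R$, so random field fluctuations compete with the surface tension precisely at this marginal scale.

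For $2<\alpha<3$ the Affonso-Bissacot-Maia scheme adapts in a fairly direct way. Split the Peierls contribution by conditioning on whether the random field inside $V(\gamma)$ is atypically large, at a threshold $t(R)$ chosen strictly between $R$ (typical Gaussian scale) and $R^{4-\alpha}$ (surface tension scale). Gaussian tails control the bad event, while on the good event the residual Peierls exponent is at least $c\beta R^{4-\alpha}$, which beats both the contour entropy $\le e^{CR}$ and the linear-in-$t$ correction $2\beta\vareps t(R)$ for $\beta$ large and $\vareps$ small. A standard union bound over dyadic scales and over contours enclosing the origin then gives $\mu^+(\sigma_0=-1)<c_2$ pointwise on a high-probability event.

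The main obstacle is the critical case $\alpha=3$. Here the logarithmic gap between surface tension and perimeter is too thin for a single per-contour Gaussian tail bound to work: optimizing the threshold in $\min_t\{e^{-2\beta(E-\vareps t)}+e^{-t^2/(2|V|)}\}$ forces the gain $R\log R$ to be shared between the two terms, and neither of the resulting exponents beats the contour entropy $e^{CR}$ for large $R$. The plan is to use a multi-scale / coarse-graining refinement, in which the random field is decomposed along a geometric sequence of scales $L_k$ and the coarse-grained fluctuations in each $L_k$-box (of order $\vareps/L_k$ per site) are compared with the per-site surface tension $\beta\log L_k/L_k$. A chaining argument over scales, together with a uniform Gaussian concentration bound for $\sum_V h_x$ across candidate contour interiors $V$, produces a good event $\mathcal A$ with $\mathbb P(\mathcal A)>1-c_2$, on which the $\log R$ factor from the long-range surface tension exactly cancels the logarithmic losses from the union bound over scales and over contour shapes. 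This balance --- surface tension logs exactly compensating entropy and chaining logs --- has no counterpart in the $d\ge 3$ analysis, where the gap is polynomial, and is the key novelty of the two-dimensional critical case.
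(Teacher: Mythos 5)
Your high-level framework --- long-range contours, a Peierls comparison against the interaction energy of the flipped region, and the Imry-Ma scaling $R^{4-\alpha}$ vs.\ $R\log R$ vs.\ $\sqrt{|V|}$ --- is the one the paper uses, and you have correctly identified that the whole difficulty is concentrated at $\alpha=3$. However, there are two concrete gaps in the proposal.

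First, the $2<\alpha<3$ argument as you describe it does not actually close for $\alpha\ge 5/2$, and you should not expect a ``single threshold $t(R)$ plus Gaussian tail'' argument to work in this regime. For a contour with $|\gamma|\sim n$, the interior can have $|V|\sim n^2$, and there are $\sim e^{cn}$ contours surrounding the origin at that size. Union-bounding the bad events $\{|\sum_{V}h_x|>t(n)\}$ over these therefore forces $t(n)\gtrsim n^{3/2}$, while the surface tension is only $\sim n^{4-\alpha}$. The condition $\vareps\, n^{3/2}\lesssim n^{4-\alpha}$ then requires $\vareps\lesssim n^{5/2-\alpha}$, which goes to zero with $n$ once $\alpha>5/2$. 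This is precisely why Affonso-Bissacot-Maia use the Fisher-Fr\"ohlich-Spencer coarse-graining combined with Ding's error-function technique, rather than a per-contour tail bound: one decomposes $\Delta_{\Int_-(\gamma)}(h)$ telescopically over admissible-cube approximations and applies Gaussian concentration scale by scale, which is what allows the union bound to be taken only over the (much smaller) collection of coarse-grained shapes at each scale. The paper's treatment of $2<\alpha<3$ is a (genuinely small) modification of this --- replacing boundary-size estimates by $J(\Int_-(\gamma))$ estimates --- not a threshold argument.

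Second, and more importantly, for $\alpha=3$ your plan stops at the level of a heuristic: ``the log factor from the surface tension exactly cancels the logarithmic losses from the union bound.'' That is the right intuition, but the actual proof needs two specific quantitative inputs that are absent from your sketch. The first is an improved cube-level energy bound $J(\mathrm{C}^+(\sigma),\mathrm{C}^-(\sigma))\gtrsim\sqrt{m}\log m$ (the paper's Lemma \ref{Lemma: Int_boundary_cubes}), which is the multiscale version of the Biskup-Chayes-Kivelson $N\log N$ scaling and is what makes the last coarse-graining scale $L_Q$ satisfy $L_Q 2^{rL_Q}\sim Q$ instead of $2^{rL_Q}\sim Q$. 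The second is the non-overlap decomposition $\sum_{\ell}Q_\ell(A)\le J(A)$ (the paper's Lemma \ref{Lemma: No_overlap}), where $Q_\ell(A)$ is the interaction contributed by scale-$\ell$ boundary pairs of shrunken cubes $\widehat{\mathrm{C}}$. This allows the entropy of coarse-grained shapes at scale $\ell$ (Proposition \ref{Prop: Prop.1.FFS}) to be bounded by the \emph{scale-local} budget $Q_\ell$ rather than the global $J$; since $\alpha=3$ forces $\sim\log Q$ scales, using the global budget at every scale would lose a $\log Q$ factor that cannot be absorbed. Without both of these ingredients the ``exact cancellation of logs'' you invoke has no mechanism behind it, so as written the proposal does not constitute a proof of the critical case.
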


Both previous works on the long-range RFIM employed a notion of contour suitable for the long-range Ising model with no field and then made the necessary adaptations to control the external field. Despite the notions of contours in \cite{Cassandro.Picco.09} and \cite{Affonso_Bissacot_Maia_23} being different, both of them were inspired by the contours of Fr\"ohlich and Spencer \cite{Frohlich.Spencer.82}.

In one dimension, it does not seem possible to use the same contours as in \cite{Cassandro.Picco.09} and remove the condition on $J(1)$. Removing the condition on $J(1)$ was achieved in \cite{Bissacot_Endo_Enter_Kimura_18} for the model with no field, but an ever stronger restriction on $\alpha$ where $3/2<\alpha^*<\alpha<2$ is required. On the other hand, one could try to use the same notion of contour as in \cite{Frohlich.Spencer.82}, since they were recently extended to the entire region $1<\alpha\leq2$ in \cite{Affonso_Bissacot_Corsini_Welsch_24}, again for the model with no random field. This however, would require extending the entropy arguments of \cite{Affonso_Bissacot_Maia_23} to one dimension, which seems possible but not trivial, and then applying our improved coarse-graining procedure as presented in Section~\ref{sec: 2d phase transition}.

We opt to use a different approach, introducing a new Peierls-type argument, tailored for long-range interactions with random field. Our construction of the Peierls map resembles that of \cite{Frohlich.Spencer.82}, and it is also done in a multiscale fashion. A noticeable difference is that we do not rely on spin flips, only on the local densities of the plus and minus spins in the configuration. Using our construction, the entropy arguments are simpler, and a coarse-graining argument similar to that in \cite{FFS84} is enough to control the field. In addition, we can also show phase transition for $\alpha\in (1,2)$ when there is no external field, which gives an alternative proof to \cite{Affonso_Bissacot_Corsini_Welsch_24} (see Remark~\ref{rmk: extension to pure 1d model}).

\subsection{Proof strategy in one dimension}
Our argument in one dimension follows the general principle of a Peierls argument: denoting by $\Lambda\Subset \Z$ a finite subset of $\Z$, for each configuration $\sigma\in\Omega^+\coloneqq  \cup_{\Lambda\Subset \Z}\Omega_\Lambda^+$ with $\sigma_0=-1$, we map $\sigma$ to a configuration $\omega$ with $\omega_0=1$ and compare their energies. Our first intuition for the Peierls map is to flip all the minuses in a suitable interval containing the origin. However, two difficulties arise:
\begin{enumerate}
    \item Spins now interact with all the others, so it is hard to define what is a suitable interval.
    \item If we flip all the minuses in an interval, the minus spins surrounded by many pluses result in a huge entropy that we can not control.
\end{enumerate}
 
To overcome the difficulties above, especially the second one, in Section~\ref{sec: balancing procedure} we define a balancing procedure. The intuition is that minus spins surrounded by a vast majority of plus spins are, in a sense, `isolated', so they should be treated as plus spins and vice-versa. We will continue this procedure until no such spin appears. In the end, we will flip all the minus spins left in a suitable interval to gain energy for which we can control the entropy.

In Lemma~\ref{Lemma: interacton_2}, we establish the key energy bound, which essentially states that for a configuration without `isolated' spins, there will be a significant number of plus spins as long as the configuration contains at least one plus spin. Lemma~\ref{Lemma: Sequence_01} plays a crucial role in the proof of Lemma~\ref{Lemma: interacton_2}, and its validity heavily depends on the one-dimensional structure.

Then in  Proposition \ref{prop: entropy argument} we provide the necessary entropy estimations to do a Peierls argument. To control the random field we use the approach of \cite{Ding2021}, so phase transition follows after we control the probability of some bad events (see \eqref{bad_event_1d}). This is done in Proposition \ref{prop: free energy control}, using the coarse-graining argument of \cite{FFS84} together with the previously established entropy bounds. 

\subsection{Proof strategy in two dimensions}
The argument of \cite{Affonso_Bissacot_Maia_23} applies the strategy of \cite{Ding2021} to the long-range model, and the main complication is to extend the coarse-graining argument of \cite{FFS84} to long-range contours, which are in general not connected. Their approach is robust, which allows us to consider the same contour system in two dimensions. For $2<\alpha<3$, a simple modification of their argument suffices to show phase transition. Such adaption, however, does not suffice to prove the critical case $\alpha=3$. A rough intuition underlying the existence of phase transition is the following Imry-Ma argument \cite{Imry.Ma.75}: the contribution of the external field from any $A\subset \Z^2$ should be of order $\sqrt{|A|}$, so there is phase transition as long as the energy contribution of flipping the spins is larger than $\sqrt{|A|}$. For the box $A = [-N,N]^2$, this contribution is at least $N^{4-\alpha}$, which beats $\sqrt{|A|}=N$ only when $\alpha<3$. However, when   $\alpha = 3$ the interaction between spins inside and outside $A$ is actually of order $N\log(N)$, see \cite{Biskup_Chayes_Kivelson_07}. So, for the critical case, we must introduce some improved energy estimations, which capture the correct order of the interaction, see Lemma \ref{Lemma: Int_boundary_cubes}. 

Even with this improvement, the same coarse-graining argument does not hold. In order to compare the entropy bound with the energy bound we need to introduce a finer coarse-graining procedure. 
Instead of directly using the entropy bound of \cite{Affonso_Bissacot_Maia_23}, we give our entropy bound according to its coarse-graining level. This improvement allows us to apply a union bound over all levels and control the enumeration coming from each level.

The structure of the paper is as follows. Section~\ref{sec: 1d phase transition} contains the proof of Theorem \ref{thm-main}. In Section~\ref{sec: balancing procedure} we define our Peierls map in one dimension. In Section \ref{sec: balancing_properties}, we analyze the Peierls map and in Section~\ref{sec: proof of main thm} we present the outline of our argument and prove Theorem \ref{thm-main} assuming some required estimations on the energy and entropy. Sections~\ref{sec: energy bound} and \ref{sec: entropy bound} are devoted to proving such estimations. Section \ref{sec: 2d phase transition} contains the proof of Theorem \ref{thm-main-2d}, and it is rather independent from Section~\ref{sec: 1d phase transition}. In Section~\ref{sec: 2d not crtitical}, we prove the case of $2<\alpha<3$ and in Section~\ref{sec: 2d crtitical} we prove the critical case $\alpha=3$.

\section{Phase Transition in 1d}\label{sec: 1d phase transition}

In this section, we focus on the case $d=1$ and prove Theorem \ref{thm-main}. In Section \ref{sec: balancing procedure}, we construct the Peierls map, whereas a key ingredient is a balancing procedure; in Section \ref{sec: balancing_properties}, we analyze the Peierls map and prove some properties of the balancing procedure, and in Section~\ref{sec: proof of main thm} we prove Theorem \ref{thm-main} assuming some suitable bounds on the energy and the entropy; in Sections \ref{sec: energy bound} and \ref{sec: entropy bound}, we provide the aforementioned bounds on the energy and the entropy respectively.

Throughout this section, we fix $d=1$, $1<\alpha<\frac{3}{2}$ and $\Lambda\Subset\Z$ containing the origin. We also fix the following constants: $c_1\geq 10$ an arbitrary constant, $\delta\coloneqq \min\{0.001,\frac{1.5-\alpha}{20}\}>0$, 
and $M_0\coloneqq M_0(\alpha, c_1)>2$ large enough, whose value will be chosen later. 
We reserve notations $C_1, \ C_2, \ \dots$ for arbitrary constants, possibly taking different values in different proofs, but depending only on $M_0$, $\delta$, or $\alpha$. In addition, we use $b_1,\ c_1,\ b_2, \ c_2, \ \dots$ and $\overline{c}_1, \ \overline{c}_2, \ \dots$ for constants whose values are fixed (throughout the paper) upon their first occurrences.

\subsection{The construction of the Peierls map}\label{sec: balancing procedure}

Given a configuration $\sigma\in\Omega^+_\Lambda$ such that $\sigma_0=-1$, we aim to determine a set of sites $A\subset \Lambda$ such that our Peierls map will obtain a configuration $\tau_A(\sigma)$ from $\sigma$ by flipping spins in $A$. Such a set has to be carefully chosen so that on the one hand we can compare the energy change from $\sigma$ to $\tau_A(\sigma)$, and on the other hand, the multiplicity of the Peierls map (i.e., the entropy loss) can be controlled. To this end, we will employ a balancing procedure  (see Definition \ref{def: Peierls map} below). 

As a notation convention, when writing an interval $[a,b)$ we always mean its restriction to $\Z$, $[a,b)\cap \Z$. At each scale $\ell\geq 0$, we consider intervals $\I_\ell(x) = [2^{\ell-4}x-2^{\ell-1}, 2^{\ell - 4}x+2^{\ell-1})$, with $x\in\Z$ and side length $2^\ell$. Each such interval will be called an $\ell$-interval, and we write $\I_\ell$ (i.e., omitting the center $x$) to denote an arbitrary $\ell$-interval. The family of all such intervals is denoted as $\mathcal{I}\coloneqq \{\I_\ell(x)\}_{x\in\Z, \ell\geq 0}$, and the restriction to $\ell$-intervals is $\mathcal{I}_\ell\coloneqq \{\I_\ell(x)\}_{x\in\Z}$. We can further split each $\mathcal{I}_\ell$ into 16 sub-collections \begin{equation}\label{eq: sub-collection of disjoint coverings}
    \mathcal{I}_\ell^i\coloneqq\{[2^{\ell-4}(16x+i), 2^{\ell-4}(16x+16+i))\}_{x\in\Z},~i=0,1, \dots 15,
\end{equation} whereas each sub-collection forms a partition of $\Z$. 

For any configuration $\sigma\in\Omega$ and any interval $\I$, we split $\I$ into its positive and negative sites $\I^\pm(\sigma) \coloneqq \{x\in \I : \sigma_x = \pm1\}$. We say that $\I_\ell$ is \textit{minus dense} with respect to a configuration $\sigma$ when $\frac{|\I^-_\ell(\sigma)|}{|\I_\ell|}> 1-\frac{1}{M_\ell}$, where $M_\ell \coloneqq M_02^{\delta \ell}$. When the minus density satisfies  $\frac{|\I^-_\ell(\sigma)|}{|\I_\ell|}\le \frac{1}{M_\ell}$, there are very few minuses in the interval, so we say $\I_\ell$ is {\textit{minus vacant}}. If the density satisfies $\frac{|\I^-_\ell(\sigma)|}{|\I_\ell|}> \frac{1}{M_\ell}$, we say $\I_\ell$ is \textit{minus occupied}. We remark that a minus dense interval is also a minus occupied interval. In a completely analogous fashion, we define the \textit{plus dense}, \textit{plus vacant}, and \textit{plus occupied} intervals. Notice that the notions of occupied are not exclusionary, that is, an interval $\I_\ell$ can be both minus occupied and plus occupied since $\frac{1}{M_\ell}\leq \frac{1}{2}$. However, the dense and vacant properties are complementary: a plus vacant interval has minus density larger than $(1 - \frac{1}{M_\ell})$, and it is therefore minus dense.

In the Peierls argument, if $\I_\ell\owns 0$ is a minus dense interval, the natural attempt is to compare $\sigma$ with a configuration after we flip all minus spins in $\I_\ell$. The long-range interaction, however, makes this very costly since there can be many other minus dense intervals near $\I_\ell$. We therefore need a notion to identify when $\I_\ell$ is isolated. 

\begin{definition}\label{def: favored interval}
An interval $\I_\ell(x)$ is \textit{plus favored} with respect to $\sigma$ if it satisfies the following two conditions:
\begin{enumerate}
    \item[\textbf{(I)}] $\sigma_y=1$ whenever $1\leq d(y, \I_\ell(x))\le 2^{\ell - 1}$ (this will be used in e.g. Lemma \ref{Lemma: interacton_close_to_I} later). 
    \item[\textbf{(II)}] All the $2M_\ell$ closest neighboring $\ell$-intervals are plus dense, that is, for all $1\leq k\leq M_\ell$,  $\I_\ell(x
    {+16k})$ and $\I_\ell(x{-16k})$ are plus dense. 
\end{enumerate}
We may also consider the following weaker notion of dense and favored:  $\I_\ell$ is \textit{weakly} plus dense (with respect to $\sigma$) when $|\I^+_\ell(\sigma)|> 2^\ell\left(1-\frac{c_1}{M_\ell}\right)$, and $\I_\ell$ is \textit{weakly} \textit{plus favored} (with respect to $\sigma$) if it satisfies \textbf{(I)} and \textbf{(II)} after we replace $M_{\ell}$ by  $M^\prime_\ell \coloneqq 2\left\lfloor\frac{M_\ell}{2c_1}\right\rfloor$ (so $M^\prime_\ell$ is even, which is only for notational convenience later when divided by $2$) and replace plus dense by weakly plus dense, where $c_1\geq 10$ is a constant fixed at the beginning of this section. See Figure~\ref{Fig: plus_favored} for an illustration of plus favored interval. Similarly, we can define the \textit{minus favored}, \textit{weakly minus dense}, and \textit{weakly} \textit{minus favored} intervals. 
\end{definition}

\begin{figure}[ht]
    \centering
    \input{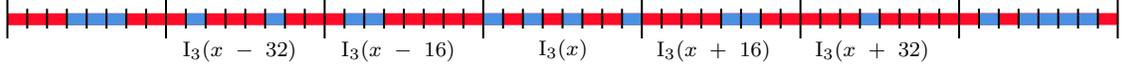}
    \caption{The figure depicts a configuration $\sigma$ where the red cubes denote plus spins and blue cubes denote minus spins. Taking $M_0=1$ and $\delta=\frac{1}{3}$, the interval $\I_3(x)$ is plus favored, since its $2^{3-1} = 4$ closest neighbors in both directions are positive, and the number of minuses in each interval of the form $\I_3(x \pm 16)$ and $\I_3(x\pm 32)$ is smaller than $\frac{2^3}{2^{3\cdot\frac{1}{3}}} = 4$. Moreover, as there are minuses in $\I_3(x)$ and it is plus favored, it is also isolated. }
    \label{Fig: plus_favored}
\end{figure}

\begin{definition}
    Given any $r\geq 1$  and any interval $\I\subset \Z$, we define $\rho_r(\I)\coloneqq \{x\in\Z: d(x, \I)\leq (r-1)|\I|\}$, the expansion of $\I$ by a $2r-1$ factor. 
\end{definition}

\begin{definition}\label{def: isolated interval}
    An interval $\I_\ell$  is \textit{isolated} with respect to $\sigma$ when it is plus favored and $\I^-_\ell(\sigma)\neq \emptyset$, or minus favored and $\I_\ell^+(\sigma)\neq \emptyset$. Similarly, we define the \textit{weakly isolated} intervals. We say $\sigma$ is \textit{balanced} in $\I \subset \Z$ 
    if for every $\ell\geq 0$,  every $\ell$-interval $\I^\prime\in\mathcal{I}_\ell$ with $\rho_{M_\ell}(\I^\prime)\subset\I$ is not isolated with respect to $\sigma$.
    A plus favored isolated interval will be called a \textit{plus isolated} interval.
    Similarly, we define the \textit{minus isolated} interval.
    
We denote by $\C_\ell^\pm(\sigma)$ the plus/minus isolated intervals in the $\ell$-th scale. We also denote $\C^\pm(\sigma) \coloneqq \cup_{\ell\geq 0}\C_\ell^\pm(\sigma)$ and  $\C(\sigma)\coloneqq \C^+(\sigma)\cup \C^-(\sigma)$. In addition, let  $\C^{+,0}(\sigma)\coloneqq \{ \I \in \C^+(\sigma) :  0\in\I \}$ be the set of all plus isolated intervals in $\sigma$ containing the origin.
\end{definition} 
 To determine the spins of $\sigma$ to be flipped, we transform it into a balanced configuration by erasing all isolated intervals, one by one. Next, we precisely describe this erasing process (as we will see below, we erase a plus/minus isolated interval by flipping all minuses/pluses therein).

For any $\sigma\in\Omega$ and $A\subset \Z$, let us denote by 
$\tau_A(\sigma)$ the configuration $\sigma$ after we flip all spins in $A$, so $\tau_A(\sigma) \coloneqq ( (-1)^{\mathbbm{1}_{A}(x)}\sigma_x)_{x\in\Z}$.
Given any $\sigma\in\Omega^{+}$, at each step $s\geq 0$, starting with the configuration $\sigma^0\coloneqq \sigma$, we employ the following \emph{balancing procedure}.
\begin{enumerate}
    \item Take $\ell\geq 0$ the smallest integer such that there exists an isolated $\ell$-interval $\I_\ell$ with $\I_\ell\in \C(\sigma^{s})\setminus\C^{+,0}(\sigma^{s})$ (that is, if $\I_\ell$ is a plus isolated interval then it cannot contain the origin; this is because we do not want to flip the origin). If there is more than one isolated $\ell$-interval in $ \C(\sigma^{s})\setminus\C^{+,0}(\sigma^{s})$, pick the leftmost one and denote it by $\B_s$; if there is no such isolated interval, we then stop our procedure.
    \item Flip all minuses in $\B_s$ if it is a plus isolated interval, and flip all pluses if it is minus isolated. That is, we take  $\sigma^{s+1} \coloneqq \tau_{\B_s^+(\sigma^{s})}(\sigma^{s})\mathbbm{1}_{\{\B_s\in\C^-\}} + \tau_{\B_s^-(\sigma^{s})}(\sigma^{s})\mathbbm{1}_{\{\B_s\in\C^+\}}$ (recall that $\B^\pm(\sigma) = \{v\in \B: \sigma_v = \pm 1\}$ for any $\B \subset \Z$). 
\end{enumerate}

\begin{figure}[ht]
    \centering
    \input{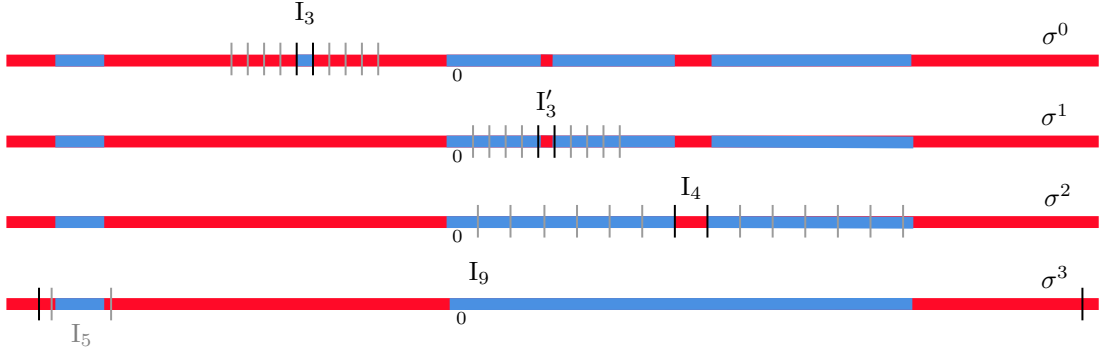}
    \vspace{-1cm}
    \caption{An illustration of the balancing procedure. As in Figure \ref{Fig: plus_favored}, red regions denote plus spins, while blue regions denote minus spins. We consider $M_0=1$ and $\delta = \frac{2}{3}$. Starting from configuration $\sigma^0$, the first isolated interval selected is $\I_3$. As it is plus favored, it is flipped to plus. Next, $\I_3^\prime$ is selected and flipped to minus for an analogous reason. Then, $\I_4$ is selected and flipped to minus. Notice that $\I_4$ is isolated with respect to $\sigma^2$ but not in $\sigma^1$, since $M_4 = 2^{\frac{8}{3}} \approx 6.34$ and all its first $6$ nearest-neighboring intervals are minus dense with respect to $\sigma^2$ but not $\sigma^1$. Notice that, in $\sigma^3$, $\I_5$ is not plus favored, since $M_5 = 2^{\frac{10}{3}}\approx 10.07$, but not all of its 10 closest neighbors are plus dense. The procedure stops at $\sigma^3$, since the only isolated interval ($\I_9$) is plus favored but contains 0.}
    \label{Fig: balancing_procedure}
\end{figure}

At each step, the balancing procedure selects one isolated interval and erases it. See Figure \ref{Fig: balancing_procedure} for an illustration. Lemma \ref{lem: procedure stops} below assures that our procedure will stop in a finite time. 
\begin{definition}\label{def: Peierls map}
    We let $S\coloneqq S(\sigma)$ be the last step of the balancing procedure of $\sigma$ and let $\I_{\sigma}$ be the smallest isolated interval with respect to $\sigma^S$ containing the origin (if there is more than one, pick the leftmost one; we let $\I_\sigma = \emptyset$ if such an interval does not exist). The spins to be flipped in the Peierls argument are $A_{\sigma}\coloneqq \I_\sigma^-(\sigma^S)$. 
\end{definition}
\subsection{The analysis of the Peierls map}\label{sec: balancing_properties}
In this subsection, we analyze the Peierls map constructed in Section \ref{sec: balancing procedure}. To this end, we first show that the balancing procedure eventually stops, for which the following lemma will be useful.
For any interval $\B$ selected in the balancing procedure, let $s_{\B}$ denote the first step in which it was selected. The following lemma shows that $\B$ will be selected only once and thus $s_{\B}$ is the only step in which $\B$ was selected.
\begin{lemma}\label{Lemma: aux_1}
    Let $\I$ be an interval selected at step $s_{\I}$ (in the balancing procedure of $\sigma$). Then for any $k\ge 1$, $\sigma^{s_\I+k}\vert_{\rho_{{\frac{3}{2}}}{(\I)}}$ is constant (here $\sigma|_{\cdot}$ means the restriction of $\sigma$).
    Furthermore, for all intervals $\B$ selected in step $s_\B>s_\I$, we have $\B\nsubseteq \rho_{{\frac{3}{2}}}{(\I)}$ or $\B=\rho_{{\frac{3}{2}}}{(\I)}$.
\end{lemma}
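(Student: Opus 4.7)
The plan is to prove both assertions simultaneously by induction on $k \geq 1$: I will show that $\sigma^{s_\I+k}|_{\rho_{3/2}(\I)}$ is constant for every $k$, and then derive $\B \nsubseteq \I$ from this constancy. Without loss of generality, assume $\I = \I_\ell$ is plus isolated (the minus case is symmetric). For the base case $k=1$, condition \textbf{(I)} of plus-favored applied to $\I$ yields $\sigma^{s_\I}_y = +1$ for every $y$ with $1 \leq d(y,\I) < 2^{\ell-1}$; after flipping all minuses in $\I$ at step $s_\I$, the configuration $\sigma^{s_\I+1}$ is therefore constantly $+1$ on $\rho_{3/2}(\I)$.

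For the inductive step, suppose $\sigma^{s_\I+k}|_{\rho_{3/2}(\I)}$ is constantly $+1$ (the case $\equiv -1$ is symmetric), and let $\B = \I_{k'}$ be the interval selected at step $s_\I + k$. If $\B \cap \rho_{3/2}(\I) = \emptyset$, there is nothing to check. If $\B$ is plus isolated, the flip only converts minuses in $\B$ to pluses, but $\B \cap \rho_{3/2}(\I)$ contains no minuses by the induction hypothesis, so $\rho_{3/2}(\I)$ is unchanged.

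The main obstacle is the case where $\B$ is minus isolated with $\B \cap \rho_{3/2}(\I) \neq \emptyset$. I will argue that necessarily $\B \supseteq \rho_{3/2}(\I)$; then the flip turns $\rho_{3/2}(\I)$ to constantly $-1$, preserving constancy. The key input is condition \textbf{(I)} of minus-favored for $\B$, which forces the ring $R(\B) \coloneqq \{y : 1 \leq d(y,\B) < 2^{k'-1}\}$ to be all $-1$; by the induction hypothesis, $R(\B)$ must therefore be disjoint from $\rho_{3/2}(\I)$. Writing $\B = [c,d]$ and $\rho_{3/2}(\I) = [a,b]$, and using $\B \cap [a,b] \neq \emptyset$, a direct one-dimensional case analysis on the left and right components of the ring shows that the only way both sides of $R(\B)$ can avoid $[a,b]$ is $c \leq a$ and $d \geq b$, i.e., $\B \supseteq \rho_{3/2}(\I)$.

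For the second assertion, suppose toward contradiction that some $\B = \I_{k'} \subseteq \I$ is selected at step $s_\B > s_\I$. Then $k' \leq \ell$, and by the first assertion $\sigma^{s_\B}|_{\rho_{3/2}(\I)}$ has constant sign. Since $\B \subseteq \I$ and $2^{k'-1} \leq 2^{\ell-1}$, the ring of $\B$ is contained in $\rho_{3/2}(\I)$ and hence has the same constant sign as the interior of $\B$. This rules out $\B$ being plus isolated (it has no minuses) and rules out $\B$ being minus isolated (its ring is not of the opposite sign), contradicting the fact that $\B$ was selected by the balancing procedure.
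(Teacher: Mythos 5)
Your proof follows essentially the same route as the paper's: induct on the steps after $s_\I$, using the observation that a selected interval's immediate surroundings must carry the opposite sign of the spins being erased, so such an interval cannot straddle the boundary of a region that is constant at that step. Your second assertion also matches the paper's reasoning — a sub-interval of $\I$ selected later would see only a constant configuration on and around itself, so it cannot be isolated. The one point where your argument has a genuine gap is that you invoke only condition \textbf{(I)} of the favored definition to deduce that the ring $R(\B) = \{y : 1 \leq d(y,\B) < 2^{k'-1}\}$ has the opposite sign from $\rho_{3/2}(\I)$. For $k' \in \{0,1\}$ this set is \emph{empty}, so the deduction ``$R(\B)$ is disjoint from $\rho_{3/2}(\I)$, hence $\B \supseteq \rho_{3/2}(\I)$'' is vacuous, and similarly in your argument for the second assertion (``its ring is not of the opposite sign'') the contradiction disappears at these small scales. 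The fix is to observe that for a $k'$-interval at any scale, \emph{some} condition (here condition \textbf{(II)}, which forces the immediately adjacent $k'$-intervals to be densely — in fact, when $k' \leq 1$, entirely — of the selected sign) always forces the two sites adjacent to $\B$ to have the sign $\B$ is about to be flipped to. This is precisely how the paper phrases the key step (``the neighboring sites of $\B$ must have sign $-u$ \dots by the definition of our balancing procedure''), which covers all scales uniformly; your proof should be amended to appeal to \textbf{(II)} as well, or to this unified statement, when $k' \leq 1$.
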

\begin{proof}
By symmetry, we assume that $\I$ is flipped to plus at step $s_\I.$
   Note that $\sigma^{s_\I+1}\vert_{\rho_{{\frac{3}{2}}}{(\I)}}=+1$ for the following reasons: for $x\in \rho_{\frac{3}{2}}(\I)\setminus\I$ we have $\sigma^{s_\I} = 1$ by Definition \ref{def: favored interval} \textbf{(I)} (this obviously holds at step $s_\I + 1$); for $x\in \I$, we have $\sigma^{s_\I+1}_x = 1$ by the definition of the balancing procedure. Suppose now there is a step $s>s_\I$ such that $\sigma^{s}\vert_{\rho_{{\frac{3}{2}}}{(\I)}}=u$ is constant and $\sigma^{s+1}\vert_{\rho_{{\frac{3}{2}}}{(\I)}}$ is not constant. Let $\B$ be the interval selected at step $s$ in the balancing procedure. Since we flipped $\B$ to $-u$, the neighboring sites of $\B$ must have sign $-u$ in $\sigma^s$, by the definition of our balancing procedure. As $\sigma^{s}\vert_{\rho_{{\frac{3}{2}}}{(\I)}}=u$, these neighboring sites of $\B$ cannot be contained in $\rho_{{\frac{3}{2}}}{(\I)}$. Hence we have either $\rho_{\frac{3}{2}}(\I)\cap \B=\emptyset$ or $\rho_{\frac{3}{2}}(\I)\subset \B$. This contradicts the fact that $\sigma^{s}\vert_{\rho_{{\frac{3}{2}}}{(\I)}}$ is constant and $\sigma^{s+1}\vert_{\rho_{{\frac{3}{2}}}{(\I)}}$ is not constant.
   
   In addition, if we flipped some interval $\B$ to $u\in \{-1,1\}$, then $\B$ must have two neighboring $u$ sites and there must be a $-u$ spin in $\B$. Since $\sigma^{s_\I+k}\vert_{\rho_{{\frac{3}{2}}}{(\I)}}$ is always constant for $k\ge 1$, we get that $\B\nsubseteq \rho_{{\frac{3}{2}}}{(\I)}$ or $\B=\rho_{{\frac{3}{2}}}{(\I)}$.
   Altogether, this completes the proof of the lemma.
\end{proof}
\begin{lemma}\label{lem: procedure stops}
    For any finite interval $\Lambda\subset\Z$ containing the origin and any $\sigma\in \Omega_{ \Lambda}^+$, the balancing procedure starting from $\sigma$ will end in finite steps. 
    In addition, if  $\sigma_0=-1$, then $\I_\sigma \neq \emptyset$ and $ A_\sigma\subset\Lambda$. In this case, the final configuration $\sigma^S$ is balanced in $\rho_{\frac{3}{2}}(\I_{\sigma})$. 
\end{lemma}
\begin{proof}
We first prove that the balancing procedure will end in finite steps.
By Lemma~\ref{Lemma: aux_1}, any interval $\B\in \mathcal{I}$ will be selected in the procedure at most once. Thus, it suffices to show that the number of intervals that may be selected is finite. We first claim that spins outside $\Lambda$ are never flipped to minus. Assuming otherwise, we can take $\B$ as the first minus isolated interval selected in the balancing procedure with $\B\cap \Lambda^c\neq \emptyset$. Supposing that $\B$ was selected in step $s_\B$, since $\B$ is minus isolated with respect to $\sigma^{s_\B}$, we get that the nearest neighbor of $\B$ outside $\Lambda$ must take value $-1$ in $\sigma^{s_\B}$. This contradicts our assumption that $\B$ was the first minus isolated interval intersecting $\Lambda^c$ selected in the procedure.

From the above claim,  it follows in particular that 
\begin{equation}\label{eq: no flip outside}
    \sigma^S\vert_{\Lambda^c}=1.
\end{equation} We also get that any interval $\B$ with $\B\cap \Lambda^c\neq \emptyset$ selected  in the balancing procedure can only be flipped to plus. By the definition of plus isolated intervals, $\B$ must contain at least one minus spin and thus $\B\cap \Lambda \neq \emptyset$. As a result,
we get that no interval $\B$ with $\B\cap \Lambda = \emptyset$ can be selected during the balancing procedure. Moreover, the number of intervals $\B\subset \Lambda$ is clearly finite, so it suffices to show that the number of intervals $\B$ selected in the balancing procedure satisfying $\B\cap \Lambda\neq\emptyset$ and $\B\cap \Lambda^c\neq\emptyset$ is finite. 

To this end, we note that every such interval $\B$ must be plus favored, as argued in the first paragraph of this proof.
Recall that $\B\in \mathcal{I}$, so if $\B$ has length $2^n$, we must have $\min_{y\in \B}|y|\ge 2^{n-4}$. This is because each $n$-interval in $\mathcal{I}$ can be written as $[2^{n-4}x,2^{n-4}(x+16))$ for some $x\in \Z$, and in addition we have $x>0$ or $x\leq -16$ since $0\not\in \B$. Combining with the fact $\B\cap\Lambda\neq \emptyset$, we get that the length of $\B$ is smaller than $32 |\Lambda|$ and therefore the number of intervals in this case is also finite. 

Next we prove that $\I_\sigma\neq \emptyset$ and $A_\sigma\subset \Lambda$ whenever $\sigma_0=-1$.  By the definition of the balancing procedure, a plus favored interval containing $0$ has never been selected through the balancing procedure. Thus, if $\sigma_0=-1$, the spin at the origin has never been flipped to $+1$ and hence we have $\sigma^S_0=-1$. From this, it follows that $0\in \B^-(\sigma)$ for every plus favored interval $\B\in \mathcal{I}$ containing the origin. From \eqref{eq: no flip outside}, we get that there exists at least one plus favored interval containing the origin (any interval $\B\in \mathcal{I}$ with $\Lambda\subset \B$ is plus isolated), and thus $\I_\sigma$ exists and from the previous observations we get that $0\in \I_\sigma^-(\sigma^S)$. Moreover, $A_\sigma = \I_\sigma^-(\sigma^S)\subset \Lambda$, since by \eqref{eq: no flip outside} all spins outside must be plus.

Finally, we prove that $\sigma^S$ is balanced in $\rho_{\frac{3}{2}}(\I_{\sigma})$. Otherwise there would exist an $\ell\geq 0$ and an $\ell$-interval  $\B\subset \rho_{\frac{3}{2}}(\I_\sigma)$ isolated with respect to $\sigma^S$. By Definition \ref{def: isolated interval}, we get that $\rho_{M_\ell}(\B)\subset \rho_{\frac{3}{2}}(\I_\sigma)$ and thus $\B$ is smaller than $\I_\sigma$.
Moreover, since $\sigma^S$ is the final configuration of the balancing procedure, $\B$ must be plus favored and contain the origin. This, however, contradicts (the `smallest' in) the definition of $\I_\sigma$. 
\end{proof}

One desirable feature of our balancing procedure, as shown in Lemma~\ref{Lemma: not_much_flips} below, is that the final configuration inherits structural properties of the original configuration: under some suitable conditions, dense regions of pluses/minuses in the original configuration remain to be weakly dense regions of pluses/minuses in the final configuration. We introduce some auxiliary definitions and lemmas before proving Lemma~\ref{Lemma: not_much_flips}. 

\begin{definition}\label{def: tame}
    An interval $\I$ is \textit{tame} with respect to $\sigma$ up to step $T$ if  $|\I\cap \B_{t}^c|\geq |\I|/16$ for all steps $t<T$, where $\B_{t}$ is the interval selected at step $t$ in the balancing procedure of $\sigma$. 
\end{definition}

\begin{lemma}\label{lem: plus density near a favored interval}
    Suppose that $\B=[b,b+2^\ell)$ is a plus favored interval with respect to $\sigma$. Then for any integer $0<p\le M_\ell2^\ell$, we have \begin{align}
        |\{x\in [b+2^\ell,b+2^\ell+p):\sigma_x=1\}|&\ge (1-\frac{2}{M_\ell})p,\label{eq: plus density near a favored interval 1}\\
        |\{x\in [b-p,b):\sigma_x=1\}|&\ge (1-\frac{2}{M_\ell})p.\nonumber\label{eq: plus density near a favored interval 2}
    \end{align} 
\end{lemma}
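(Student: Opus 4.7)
I would prove the first inequality only, since the second one follows from the symmetric argument applied to the intervals $\I_\ell(x-16k)$ lying to the left of $\B$. Writing $J \coloneqq [b+2^\ell, b+2^\ell+p)$, the goal is to show $|J^-(\sigma)| \le \tfrac{2p}{M_\ell}$. The underlying idea is that $J$ sits immediately to the right of $\B$ inside the region tiled by the consecutive $\ell$-intervals $\I_\ell(x+16k)$, $k \ge 1$, each of which is plus dense by condition \textbf{(II)} of plus favored and therefore hosts at most $2^\ell/M_\ell$ minus spins.

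The plan is to split the argument into two regimes according to the size of $p$. For $p < 2^{\ell-1}$, condition \textbf{(I)} of plus favored immediately forces $\sigma_y = +1$ for every $y \in J$, since any such $y$ satisfies $1 \le d(y,\B) \le p < 2^{\ell-1}$; hence $|J^-(\sigma)| = 0$, trivially meeting the required bound. For $2^{\ell-1} \le p \le M_\ell 2^\ell$, I would set $k_0 \coloneqq \lceil p/2^\ell \rceil$, so that $J \subset \bigcup_{k=1}^{k_0} \I_\ell(x+16k)$. The hypothesis $p \le M_\ell 2^\ell$ ensures $k_0 \le M_\ell$ (up to harmless integer rounding of $M_\ell$), so every interval appearing in this cover is plus dense. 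A union bound then yields
\[
|J^-(\sigma)| \le \sum_{k=1}^{k_0} |\I_\ell(x+16k)^-(\sigma)| \le k_0 \cdot \frac{2^\ell}{M_\ell} \le \frac{p+2^\ell}{M_\ell} \le \frac{2p}{M_\ell},
\]
where the last inequality uses $2^\ell \le 2p$, which is valid precisely in the regime $p \ge 2^{\ell-1}$. Combining both regimes gives $|J^+(\sigma)| = p - |J^-(\sigma)| \ge (1 - \tfrac{2}{M_\ell})p$, as desired.

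This is essentially a routine covering estimate, and I do not anticipate any genuine obstacle. The only mild bookkeeping issue is confirming that the index $k_0$ stays within the range $\{1,\dots,\lfloor M_\ell\rfloor\}$ on which condition \textbf{(II)} supplies plus density, which is exactly what the hypothesis $p \le M_\ell 2^\ell$ is designed to guarantee.
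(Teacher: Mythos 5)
Your proposal takes essentially the same approach as the paper: reduce to one side by symmetry, invoke condition \textbf{(I)} directly for $p<2^{\ell-1}$, and otherwise cover $[b+2^\ell,b+2^\ell+p)$ by the consecutive plus-dense $\ell$-intervals $\I_\ell(x+16k)$ supplied by condition \textbf{(II)} and sum the minus counts. The paper simply splits the covering case once more, into $2^{\ell-1}<p\le 2^\ell$ (a single plus-dense interval suffices) and $p>2^\ell$ (several intervals), whereas you merge them with $k_0=\lceil p/2^\ell\rceil$; this is an equivalent argument.

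One small arithmetic slip worth flagging: your chain $k_0\cdot\frac{2^\ell}{M_\ell}\le\frac{p+2^\ell}{M_\ell}\le\frac{2p}{M_\ell}$ uses, in the final step, $p+2^\ell\le 2p$, which requires $p\ge 2^\ell$, not merely $p\ge 2^{\ell-1}$ as you state (you wrote ``$2^\ell\le 2p$'' which gives only $p+2^\ell\le 3p$). The conclusion $k_0 2^\ell\le 2p$ is nonetheless true throughout the regime $p\ge 2^{\ell-1}$, because when $2^{\ell-1}\le p<2^\ell$ one has $k_0=1$ and $2^\ell\le 2p$ directly; this is precisely why the paper treats that range as a separate subcase rather than passing through the intermediate bound $p+2^\ell$. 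So the written chain should be corrected, but the overall argument is sound.
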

\begin{proof}
    By symmetry, it suffices to prove \eqref{eq: plus density near a favored interval 1}. We consider three cases according to the value of $p$.\begin{enumerate}
        \item If $p\le 2^{\ell-1}$, then all spins in $[b+2^\ell,b+2^\ell+p)$ must be plus, and thus \eqref{eq: plus density near a favored interval 1} holds immediately.
        \item If $2^{\ell-1}<p\le 2^{\ell}$, as the number of minuses in  $[b+2^\ell,b+2^\ell+p)$ is at most $\frac{2^{\ell}}{M_\ell}$, we get  \eqref{eq: plus density near a favored interval 1} immediately since $\frac{2^{\ell}}{M_\ell}\le \frac{2p}{M_\ell}$.
        \item If $p>2^\ell$, let $k=\lfloor\frac{p}{2^\ell}\rfloor$, so $p\ge k\cdot 2^\ell$. Since $p\le M_\ell2^\ell$, the $\ell$-interval $[b+i\cdot2^\ell,b+(i+1)\cdot2^\ell)$ is minus vacant for any integer $0<i\leq k$, implying that the number of minuses in $[b+2^\ell,b+2^\ell+p)$ is at most $\frac{(k+1)2^{\ell}}{M_\ell}$. Again, the trivial bound  $\frac{(k+1)2^{\ell}}{M_\ell}\le \frac{2k\cdot 2^\ell}{M_\ell}\le \frac{2p}{M_\ell}$ yields the desired \eqref{eq: plus density near a favored interval 1}.\qedhere
    \end{enumerate}
\end{proof}

\begin{lemma}\label{Lemma: not_much_flips}
   Let $\I$ be an interval with length $2^{n}\leq |\I| < 2^{n+1}$. Let $\sigma\in\Omega_{\Lambda}^+$ and let $T\geq 1$ be a step such that $\I$ is tame with respect to $\sigma$ up to step $T$. If $|\I^+(\sigma)|\leq \frac{|\I|}{\sqrt{M_n}}$, then \begin{equation}\label{eq: not_much_flip}
       |\cup_{t=0}^T\I^+(\sigma^t)|\leq c_1|\I^+(\sigma)|.
   \end{equation}
\end{lemma}

\begin{proof}
    Our proof of Lemma~\ref{Lemma: not_much_flips} is by induction on $n$.
    If $n=1$ then $|\I|=1$. Thus, no interval intersecting $\I$ is allowed to be selected, implying the claim immediately. Now we assume the lemma holds for any $n\leq N-1$, any starting configuration $\sigma$ and any $T\geq 1$. 
    We next prove by induction the case when $n = N$. To this end, let $T^\prime+1$ be the first step before $T$ such that an $\ell$-interval $\B$ with $M_\ell2^\ell> 2^n$ and $\B\cap \I\neq \emptyset$ was selected and flipped to plus (if no such interval exists before step $T$, then we take $T^\prime=T$).
    We claim that it suffices to prove that 
    \begin{equation}\label{eq: not much flips before T'}
        |\cup_{t=0}^{T^\prime}\I^+(\sigma^t)|\leq c_1|\I^+(\sigma)|,
    \end{equation} 
    i.e., the desired bound holds at step ${T^\prime}$. To this end, we next prove that in fact, we must have $T^\prime = T$ assuming \eqref{eq: not much flips before T'} holds. Otherwise, if $T^\prime < T$, as $\B$ was selected at step $T^\prime+1$ and $\I$ was assumed to be tame up to step $T$ (recall Definition \ref{def: tame}), we must have $|\I\cap \B^c|\geq \frac{|\I|}{16}$. Since $2^n<M_\ell 2^\ell$ and $|\I|<2^{n+1}$, we have $\frac{|\I\setminus \B|}{2}\le M_\ell 2^\ell$. Thus, by Lemma \ref{lem: plus density near a favored interval} the number of pluses of $\sigma^{T^\prime}$ in $\I\setminus \B$ is at least $(1 - \frac{2}{M_\ell})\frac{|\I\setminus \B|}{2}$. 
    Hence we have $|\I^+(\sigma^{T^\prime})|\ge (\frac{1}{2}-\frac{1}{M_\ell})|\I\setminus \B|\ge \frac{|\I|}{64}$, which contradicts \eqref{eq: not much flips before T'}. This then proves that $T=T^\prime$ and the desired result follows. The rest of the proof is dedicated to proving \eqref{eq: not much flips before T'}. 
    
    From now on, we fix a starting configuration $\sigma$. In order to keep track of the change during the balancing procedure, for any $x\in\I$ we record the first selected interval $D^x \ni x$ which results in flipping $x$ to plus ($x$ may or may not be flipped again, but that is not relevant to us). If $x$ was never flipped to plus we just take $\mathrm{D}^x = \emptyset$. Let $\mathcal{F}_\ell \coloneqq \{\mathrm{D}^x \subset \Z: |\mathrm{D}^x|=2^{\ell}, x\in\I\}$ be the collection of $\ell$-intervals  we have recorded. For any $\B\in \mathcal{F}_\ell$, let $s_\B$ be the step in which $\B$ was flipped and let $P_\ell(\B)=|\cap_{t=0}^{s_\B}\B^-(\sigma^{t})|$ be the number of new pluses created at the flip.  We say $\B\in\mathcal{F}_\ell$ is \textit{negligible} when $P_\ell(\B)\leq \frac{2^\ell}{M_\ell}$, i.e., not many new pluses were created when flipping minuses in $\B$; otherwise we say $\B$ is \textit{non-negligible}. We further split $\mathcal{F}_\ell$ into the union of $\mathcal{F}_\ell^j\coloneqq \mathcal{F}_\ell\cap \{[2^{\ell}(x+\frac{j}{16}), 2^{\ell}(x+\frac{j}{16}+1))\}_{x\in\Z}$ for $j=0,\dots, 15$. Let $\ell_n\coloneqq \lfloor\frac{n-\log_2(M_0)}{1+\delta}\rfloor$ be the maximal integer $\ell$ such that $M_\ell2^\ell\leq 2^n$. 
\begin{claim}\label{lem: negligibe_contibuiton}
    For negligible intervals and $0\le j\le 15$, if $\ell\le\ell_n$
    we have
    \begin{equation}\label{Eq: negligibe_contibuiton}
         \sum_{\substack{\B\in \mathcal{F}^j_\ell \\ \B \text{ negligible}}}P_\ell(\B)\leq \frac{8|\I^+(\sigma)|}{\sqrt{M_\ell}}. \\
    \end{equation}
\end{claim}
\begin{claim}\label{lem: non-negligibe_contibuiton}
    For non-negligible intervals and $0\le j\le 15$, if $\ell\le\ell_n$ 
    we have
    \begin{equation}\label{Eq: non-negligible_contribuition}
       \sum_{\substack{\B\in \mathcal{F}^j_\ell \\ \B \text{ non-negligible}}}P_{\ell}(\B)\leq\frac{2|\I^+(\sigma)|}{M_\ell^{1/3}}.
    \end{equation}
\end{claim}
For smooth presentation, we will first complete the proof of the lemma (i.e., proving \eqref{eq: not much flips before T'}) assuming Claims \ref{lem: negligibe_contibuiton} and \ref{lem: non-negligibe_contibuiton}, and provide their proofs afterwards. We avoid stating these claims as independent lemmas since we are under the induction hypothesis of the present lemma. 

    Note that $\cup_{t=0}^{T^\prime}\I^+(\sigma^t) \subset \I^+(\sigma) \cup\left( \cup_{t=1}^{T^\prime}\I^+(\sigma^t)\cap \I^-(\sigma)\right)$. 
    Recall the definition of $\mathcal{F}_\ell$ and $P_\ell(\B)$. Every $x \in \cup_{t=0}^{T^\prime}\I^+(\sigma^t)\cap \I^-(\sigma)$ must be in an interval that was selected in the balancing procedure and flipped to plus (i.e., this selected interval was a plus isolated interval). 
    Then we have
    \begin{equation*}
         \left|\cup_{t=0}^{T^\prime}\I^+(\sigma^t)\cap \I^-(\sigma)\right|\leq \sum_{\ell= 0}^{\ell_n} \sum_{\B\in \mathcal{F}_\ell}P_\ell(\B).
    \end{equation*} 
    Combined with \eqref{Eq: negligibe_contibuiton} and \eqref{Eq: non-negligible_contribuition}, as the upper bounds do not depend on $j$, it yields
   \begin{align}
       |\cup_{t=1}^{T^\prime}\I^+(\sigma^t) \cap \I^-(\sigma)|&\leq \sum_{\ell = 0}^{\ell_n} \sum_{\B\in \mathcal{F}_\ell}P_\ell(B) \nonumber \\  &\leq \sum_{\ell = 0}^{\ell_n}16{\times\left(\frac{2|\I^+(\sigma)|}{M_\ell^{1/3}}+\frac{8|\I^+(\sigma)|}{\sqrt{M_\ell}}\right)}
       \leq (c_1-1)|\I^+(\sigma)|,\nonumber\label{Eq: control_flips}
      \end{align}
      where the last inequality comes from choosing $M_0>0$ large enough. This completes the proof of \eqref{eq: not much flips before T'}.
\end{proof}

\begin{proof}[\textbf{Proof of Claim \ref{lem: negligibe_contibuiton}}] Recall that for any negligible interval $\B\in\mathcal{F}_\ell^{ j}$, $s_\B$ is the step that $\B$ was flipped. Let $\mathrm{C}= \rho_{\frac{3}{2}}(\B)\cap\I$. We will prove that $\mathrm{C}$ is tame with respect to $\sigma$ up to step $s_{\B}$ (and then we may apply our induction hypothesis to $\mathrm{C}$, i.e., apply \eqref{eq: not_much_flip} to $\mathrm{C}$; note that in the context of induction we have $|\mathrm C| \leq 2^{\ell+1}< 2^{n})$.  If an interval $\I^\prime$ with $|(\I^\prime)^c\cap \mathrm{C}| \le |\mathrm{C}|/16$ was selected in the balancing procedure before step $s_{\B}$, the length of $\I^\prime$ must be at least $\frac{15}{16}\cdot |\mathrm{C}|$ and thus $\mathrm{C}\subset \rho_{\frac{3}{2}}(\I^\prime)$. By Lemma~\ref{Lemma: aux_1}, the configuration restricted to $\rho_{\frac{3}{2}}(\I^\prime)$ remains constant at all steps after $s_{\I^\prime}$.
In addition, since $\B$ is plus favored at step $s_\B$, 
$\rho_{\frac{3}{2}}(\I^\prime)$ must be all plus at step $s_\B$ and this contradicts the fact that at least one vertex in $\B\cap\I$ was flipped to plus at step $s_\B$. Therefore, only intervals $\I^\prime$ with $|(\I^\prime)^c\cap \mathrm{C}| > |\mathrm{C}|/16$ can be selected in the balancing procedure before step $s_\B$. This implies that $\mathrm{C}$ satisfies the claimed tame condition.

    Next, we want to control the number of negligible $\ell$-intervals in $\mathcal{F}_\ell^j$. For any negligible interval $\B\in\mathcal{F}_\ell^j$,  $\rho_{\frac{3}{2}}(\B)\setminus\B$ is the union of two intervals, and one of them must be contained in $\I$ since $\ell < n$.
    In addition, since $\B$ is plus favored at step $s_\B$ we have $\sigma^{s_\B}\vert_{\rho_{\frac{3}{2}}(\B)\setminus\B} = 1$, and thus $|\cup_{t=0}^{s_\B}\mathrm{C}^+(\sigma^t)|\ge 2^{\ell-1}$. Applying the induction hypothesis combined with the fact that $|\mathrm{C}|\le 2^{\ell+1} $ and $\frac{1}{c_1}\cdot 2^{\ell-1}>\frac{2^{\ell-1}}{\sqrt{M_{\ell+1}}}$, we get $|\mathrm{C}^+(\sigma)|\geq \frac{2^{\ell-1}}{\sqrt{M_{\ell+1}}}$.

    Note that for any non-neighboring intervals $\B,\B^\prime\in\mathcal{F}_\ell^j$ we have $ \rho_{\frac{3}{2}}(\B)\cap \rho_{\frac{3}{2}}(\B^\prime)=\emptyset$, and thus $\mathrm{C}\cap\mathrm{C}^\prime=\emptyset$, where $\mathrm{C}^\prime= \rho_{\frac{3}{2}}(\B^\prime)\cap \I$. This implies that, for any $x\in\Z$, there are at most two $\B\in\mathcal{F}_\ell^j$ with $x\in \rho_{\frac{3}{2}}(\B)~$. 
    As there are $|\I^+(\sigma)|$ pluses in $\I$ at the start of the balancing procedure, we conclude that the number of negligible intervals in $\mathcal{F}_\ell^j$ is at most $2|\I^+(\sigma)|\frac{\sqrt{M_{\ell+1}}}{2^{\ell-1}}$. Combining with the fact that $P_\ell(\B)\leq\frac{2^\ell}{M_\ell}$ for any negligible interval $\B\in\mathcal{F}_\ell^j$, we get that 
    \begin{equation*}
         \sum_{\substack{\B\in \mathcal{F}^j_\ell \\ \B_i \text{ negligible}}}P_\ell(\B)\leq 2|\I^+(\sigma)|\frac{\sqrt{M_{\ell+1}}}{2^{\ell-1}}\cdot \frac{2^\ell}{M_\ell}\leq  \frac{8|\I^+(\sigma)|}{\sqrt{M_\ell}},
    \end{equation*} concluding the proof of Claim~\ref{lem: negligibe_contibuiton}.
\end{proof}
\begin{proof}[\textbf{Proof of Claim \ref{lem: non-negligibe_contibuiton}}] We list the non-negligible intervals in $\mathcal{F}_\ell^j$ as $\{\B_1,\dots,\B_K\}$. For $1\leq i\leq K$, we write $s_i\coloneqq s_{\B_i}$.
    We want to find an interval $\mathrm{C}_i$ neighboring $\B_i$ such that $\mathrm{C}_i$ has a large density of pluses. The set $\{x\in\Z: 1\leq d(x, \B_i)\leq \frac{M_\ell-1}{2}2^\ell\}$ is the union of two intervals, and one of them (which we denote as $\mathrm{C}_i$) must be contained in $\I$, since $M_\ell2^\ell\leq 2^n$. {In addition, we have $|\mathrm{C}_i|<2^{n}.$} Similar to the proof of Claim~\ref{lem: negligibe_contibuiton}, we want to apply the induction hypothesis to get a lower bound on the number of pluses in $\mathrm{C}_i$ at the start of the balancing procedure, and then give an upper bound on the number of non-negligible $\ell$-intervals. 
    
    As in the proof of Claim~\ref{lem: negligibe_contibuiton}, we now show that $\mathrm{C}_i$ is tame with respect to $\sigma$ up to step $s_i$. Suppose this is not the case. So, there is an interval $\I^\prime$ with $|(\I^\prime)^c\cap \mathrm{C}_i| \le |\mathrm{C}_i|/16$ selected in the balancing procedure at some step $s_i^\prime<s_i$. Then by Lemma~\ref{Lemma: aux_1}, we get that $\rho_{3/2}(\I^\prime)$ is constant in $\sigma^{s_i^\prime +k}$ for any $k\ge 1$. Note that $|\I^\prime|\ge\frac{15|\mathrm{C}_i|}{16}$ and thus $\frac{|\I^\prime|}{2}\ge \frac{15|\mathrm{C}_i|}{32}>\frac{|\mathrm{C}_i|}{8}+|\B_i|$ (see Figure~\ref{Fig: I_prime} for an illustration), so we have $\B_i\subset\rho_{3/2}(\I^\prime)$.
    \begin{figure}
        \centering
        \tikzset{every picture/.style={line width=0.75pt}} 

\begin{tikzpicture}[x=0.75pt,y=0.75pt,yscale=-1,xscale=1]

\draw [line width=0.75]    (133,96) -- (387.83,96.92) (149.01,92.06) -- (148.99,100.06)(165.01,92.12) -- (164.99,100.12)(181.01,92.17) -- (180.99,100.17)(197.01,92.23) -- (196.99,100.23)(213.01,92.29) -- (212.99,100.29)(229.01,92.35) -- (228.98,100.35)(245.01,92.4) -- (244.98,100.4)(261.01,92.46) -- (260.98,100.46)(277.01,92.52) -- (276.98,100.52)(293.01,92.58) -- (292.98,100.58)(309.01,92.63) -- (308.98,100.63)(325.01,92.69) -- (324.98,100.69)(341.01,92.75) -- (340.98,100.75)(357.01,92.81) -- (356.98,100.81)(373.01,92.87) -- (372.98,100.87) ;
\draw [shift={(387.83,96.92)}, rotate = 180.21] [color={rgb, 255:red, 0; green, 0; blue, 0 }  ][line width=0.75]    (0,5.59) -- (0,-5.59)   ;
\draw [shift={(133,96)}, rotate = 180.21] [color={rgb, 255:red, 0; green, 0; blue, 0 }  ][line width=0.75]    (0,5.59) -- (0,-5.59)   ;
\draw  [color={rgb, 255:red, 0; green, 0; blue, 0 }  ,draw opacity=1 ] (143,64) -- (405,64) -- (405,128) -- (143,128) -- cycle ;
\draw [color={rgb, 255:red, 208; green, 2; blue, 27 }  ,draw opacity=1 ][line width=0.75]    (83.87,96) -- (133,96) ;
\draw [shift={(133,96)}, rotate = 180] [color={rgb, 255:red, 208; green, 2; blue, 27 }  ,draw opacity=1 ][line width=0.75]    (0,5.59) -- (0,-5.59)   ;
\draw [shift={(83.87,96)}, rotate = 180] [color={rgb, 255:red, 208; green, 2; blue, 27 }  ,draw opacity=1 ][line width=0.75]    (0,5.59) -- (0,-5.59)   ;
\draw  [color={rgb, 255:red, 155; green, 155; blue, 155 }  ,draw opacity=0.45 ] (10,64) -- (538,64) -- (538,128) -- (10,128) -- cycle ;
\draw [color={rgb, 255:red, 0; green, 0; blue, 0 }  ,draw opacity=1 ] [dash pattern={on 0.84pt off 2.51pt}]  (274,64) -- (274,128) ;

\draw (321,108.4) node [anchor=north west][inner sep=0.75pt]    {$\mathrm{C}_{i}$};
\draw (269.5,35.9) node [anchor=north west][inner sep=0.75pt]  [color={rgb, 255:red, 128; green, 128; blue, 128 }  ,opacity=1 ]  {$\mathrm{I}^{\prime }$};
\draw (101.17,103.89) node [anchor=north west][inner sep=0.75pt]  [color={rgb, 255:red, 208; green, 2; blue, 27 }  ,opacity=1 ]  {$\B_{i}$};
\draw (474.5,135.89) node [anchor=north west][inner sep=0.75pt]  [color={rgb, 255:red, 128; green, 128; blue, 128 }  ,opacity=1 ]  {$\rho _{\frac{3}{2}}\left(\mathrm{I}^{\prime }\right)$};

\end{tikzpicture}
        \caption{The red region denotes the interval $\B_i$; the black line denotes the interval $\mathrm{C}_i$, and is divided into $16$ parts. The black box delimits the interval $\I^\prime$ that covers more than a $\frac{15}{16}$ fraction of $\mathrm{C}_i$, and the gray box represents $\rho_{\frac{3}{2}}(\I^\prime)$. The dotted line splits $\I^\prime$ in half.}
        \label{Fig: I_prime}
    \end{figure}
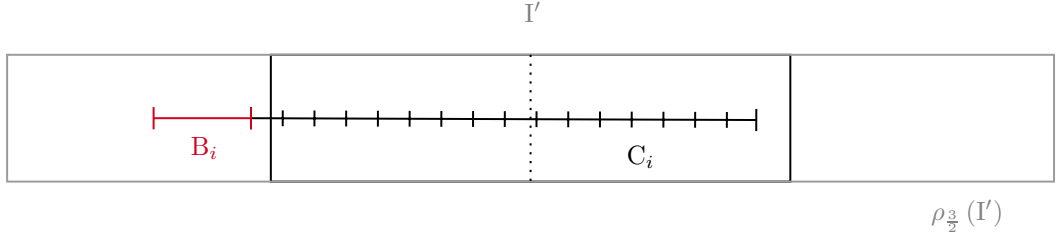
    Since $\B_i$ is plus isolated with respect to $\sigma^{s_i}$, this implies that $\sigma^{s_i}\vert_{\B_i}$ contains at least one minus spin. Thus  $\sigma^{s_i}\vert_{\rho_{\frac{3}{2}}(\I^\prime)}=-1$. However, this contradicts the fact that $\B_i$ is plus favored with respect to $\sigma^{s_i}$.
    This then verifies the desired tame condition for $\mathrm{C}_i$.

    Next, we want to control the number of non-negligible $\ell$-intervals in $\mathcal{F}_\ell^j$. Since $\B_i$ is plus favored at step $s_i$, we get that $|\mathrm{C}_i^+(\sigma^{s_i})|\geq \frac{M_\ell-1}{2}\left(1-\frac{1}{M_\ell}\right)2^\ell$. Take $k<n$ satisfying $2^k\leq \frac{M_\ell-1}{2}2^\ell \leq 2^{k+1}$. As in the proof of Claim~\ref{lem: negligibe_contibuiton}, we apply the induction hypothesis and combine it with the fact that $\frac{1}{c_1}\left( \frac{M_\ell-1}{2}\left(1-\frac{1}{M_\ell}\right)2^\ell\right)>\frac{1}{\sqrt{M_k}}\frac{M_\ell-1}{2}2^\ell$ to get
    \begin{equation}
        |\mathrm{C}_i^+(\sigma)|\geq \frac{1}{\sqrt{M_k}}\frac{M_\ell-1}{2}2^\ell\ge \frac{M_\ell^{1/3}2^{\ell}}{2},\label{eq: non-negligible initial plus control}
    \end{equation}
    where the last inequality holds since $M_0>0$ is big enough and $\delta<\frac{1}{20}$.

    To count the number of $\mathrm{C}_i$'s, we first show that they are disjoint by proving that for any $\B_i, \B_j$, we have 
    \begin{equation}\label{Eq: non-negligible_are_distant}
        d(\B_i, \B_j)> (M_\ell-1)2^\ell.
    \end{equation} 
    To this end, we can assume without loss of generality that $s_i<s_j$, i.e.,  $\B_i$ was flipped first. If $d(\B_i, \B_j)\leq (M_\ell-1)2^\ell$, since $\B_i$ is plus isolated with respect to $\sigma^{s_i}$, $\B_j$ must be minus vacant with respect to $\sigma^{s_i}$. Thus, $|\B_j^-(\sigma^{s_i})|{\le\frac{2^\ell}{M_\ell}}$, which contradicts to the fact that $\B_j$ is non-negligible.

    Note that the choice of $\mathrm{C}_i$ ensures that $\mathrm{C}_i\subset\I$.  Combining with the fact that $\mathrm{C}_i$'s do not intersect, we get from \eqref{eq: non-negligible initial plus control} that the maximum number of non-negligible intervals is at most $|\I^+(\sigma)|\frac{2}{M_\ell^{1/3}2^\ell}$, and therefore 
     \begin{equation*}
       \sum_{i=1}^K P_{\ell}(\B_i)\leq |\I^+(\sigma)|\frac{2}{M_\ell^{1/3}2^\ell}\cdot 2^\ell \leq \frac{2|\I^+(\sigma)|}{M_\ell^{1/3}}.
    \end{equation*}Thus we complete the proof of Claim~\ref{lem: non-negligibe_contibuiton}.
\end{proof}

Suppose at some step of the balancing procedure we selected a plus favored interval $\I$ and flipped all minus spins in $\I$. In the following steps of the balancing procedure, we may have created minuses close to $\I$, so that it is no longer plus favored with respect to the final configuration. We can however show that $\I$ will still be weakly plus favored with respect to the final configuration, as incorporated in the next proposition.

\begin{proposition}\label{Prop: balanced_intervals}
    Let $\sigma^T$ be the configuration obtained via the balancing procedure starting from $\sigma$ after $T$ steps. Given $x\in\Z$ with $\sigma^s_x\neq \sigma_x^{T}$ for some $0\le s<T$, let $\I$ be the last interval selected in the balancing procedure before step $T$ that flips $x$. Then $\I$ is weakly plus (resp. minus) favored with respect to $\sigma^{T}$ if $\sigma^T_x = 1$ (resp. $\sigma^T_x = -1$). 
\end{proposition}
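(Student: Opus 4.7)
The plan is to verify the two conditions defining \emph{weakly plus favored} (or \emph{weakly minus favored}) for $\I$ at the final configuration $\sigma^T$. By the plus/minus symmetry of the balancing procedure, I will treat the case $\sigma^T_x = 1$, so that $\I$ is a plus isolated $\ell$-interval selected at some step $s_\I < T$ and flipped to $+1$; the case $\sigma^T_x = -1$ is analogous (the origin exception for $\C^{+,0}$ is the only asymmetry and does not affect the local analysis around $\I$).

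For condition \textbf{(I)}, I will actually verify the stronger statement $\sigma^T|_{\rho_{3/2}(\I)} \equiv 1$. By Definition \ref{def: favored interval} \textbf{(I)} applied to $\I$ at $\sigma^{s_\I}$, we have $\sigma^{s_\I}|_{\rho_{3/2}(\I) \setminus \I} \equiv 1$; combined with the flip of $\I$ this yields $\sigma^{s_\I+1}|_{\rho_{3/2}(\I)} \equiv 1$. Lemma \ref{Lemma: aux_1} then gives that this restriction remains identically $+1$ through step $T$. For the weakened condition \textbf{(II)}, fix a neighbor $\I^\prime = \I_\ell(x_\I \pm 16k)$ with $1 \leq k \leq M^\prime_\ell$; the goal is $|(\I^\prime)^-(\sigma^T)| \leq c_1 \cdot 2^\ell/M_\ell$. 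Since $\I$ was plus favored at $s_\I$ and $M^\prime_\ell \leq M_\ell$, we have $|(\I^\prime)^-(\sigma^{s_\I})| \leq 2^\ell/M_\ell$, so it suffices to bound by $(c_1-1)\cdot 2^\ell/M_\ell$ the number of new minuses created in $\I^\prime$ during steps $s_\I+1, \ldots, T$.

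The configuration is sharply constrained after $s_\I$ by Lemma \ref{Lemma: aux_1}: any interval $\mathrm{B}$ selected at a step $s > s_\I$ must satisfy $\mathrm{B} \cap \rho_{3/2}(\I) = \emptyset$ or $\rho_{3/2}(\I) \subset \mathrm{B}$, and in the second case the constancy of $\rho_{3/2}(\I)$ at $+1$ forces $\mathrm{B}$ to be flipped to $+1$, so it creates no minuses in $\I^\prime$. Therefore only minus-favored intervals disjoint from $\rho_{3/2}(\I)$ contribute new minuses to $\I^\prime$. I plan to count these by invoking the plus/minus-symmetric analog of Lemma \ref{Lemma: not_much_flips} applied to $\I^\prime$ in the sub-procedure starting at step $s_\I+1$; the initial minus count $\leq 2^\ell/M_\ell \leq |\I^\prime|/\sqrt{M_\ell}$ comfortably satisfies the smallness hypothesis, and the scale-by-scale negligible/non-negligible counting of the proof of Lemma \ref{Lemma: not_much_flips} transfers by swapping the roles of $+$ and $-$. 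The main obstacle is the tame condition for $\I^\prime$ in this sub-procedure: any selected interval covering more than $15/16$ of $\I^\prime$ would have size $\geq 15\cdot 2^\ell/16$, and I expect the Lemma \ref{Lemma: aux_1} dichotomy together with the plus-dense structure around $\I$ at $s_\I$ (which persists close to $s_\I$) to rule out any such interval being selected as minus-favored before step $T$. Once tameness is verified, the symmetric counting argument delivers the required bound and completes the proof.
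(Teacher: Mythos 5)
Your overall template is right, and you've identified most of the ingredients the paper uses: Lemma~\ref{Lemma: aux_1} to keep $\rho_{3/2}(\I)$ constant and verify \textbf{(I)}, and a tameness-plus-Lemma~\ref{Lemma: not_much_flips} count on the neighboring intervals to verify \textbf{(II)}. But the plan has two genuine gaps around the tameness of $\I' = \B_k$. First, you try to show $\I'$ is tame all the way up to step $T$ by ruling out minus-favored intervals covering $\I'$. Even if you succeeded, this would not give tameness: a \emph{plus}-favored interval $\mathrm{C}$ with $|\B_m\cap\mathrm{C}^c|\leq |\B_m|/16$ can perfectly well be selected (a few residual minuses in $\B_m$ are enough to make a larger plus-favored interval isolated), and this breaks the tameness hypothesis of Lemma~\ref{Lemma: not_much_flips} just as badly. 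The paper resolves this by \emph{splitting} at the first covering step $T_m+1$: tameness is only claimed up to $T_m$ (so Lemma~\ref{Lemma: not_much_flips} applies there), and after $T_m+1$ the paper shows $\B_m$ is all plus for the remainder of the procedure, so no further minuses can accumulate. Your proposal has no analogue of this split.

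Second, and more fundamentally, the claim that no minus-favored interval almost-covers $\B_m$ is itself nontrivial and your proposal doesn't supply a proof, only the expectation that the Lemma~\ref{Lemma: aux_1} dichotomy and the plus-dense halo of $\I$ will do it. A minus-favored covering interval $\mathrm{C}$ is disjoint from $\rho_{3/2}(\I)$ (by the dichotomy and the fact that $\I$ is the last flipper of $x$), so it lives entirely on the far side of $\I$, and the plus-dense halo of $\I$ alone does not rule it out. The paper's proof is a genuine induction on $k$ over the neighbors $\B_1,\dots,\B_{M'_\ell}$: a large minus-favored $\mathrm{C}$ covering $\B_m$ would force $\rho_{3/2}(\mathrm{C})$ to be all minus, and since $\rho_{3/2}(\mathrm{C})$ overlaps $\B_{m-1}$ in at least $7\cdot 2^{\ell-4}$ sites, this contradicts the already-established bound $\bigl|\cup_t \B_{m-1}^-(\sigma^t)\bigr|\leq c_1 2^\ell/M_\ell$ for the previous neighbor. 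Without this inward-propagating induction the claim does not follow. Finally, a minor slip: Lemma~\ref{Lemma: aux_1} only gives that $\sigma^{s_\I+t}|_{\rho_{3/2}(\I)}$ is constant at each step, not that it stays $+1$; to fix the sign you need to invoke (as the paper does) that $\I$ is the last interval flipping $x$ before $T$, hence no interval containing $\rho_{3/2}(\I)\ni x$ can later be flipped to $-1$ --- your "constancy at $+1$ forces $\mathrm{B}$ to be flipped to $+1$" has the implication backwards.
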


\begin{proof}
    Let us assume that $\sigma^{s}_x=-\sigma^T_x=-1$. Let $s_\I$ be the step in the balancing procedure in which $\I$ was flipped. First notice that all spins in $\rho_{\frac{3}{2}}(\I)$ are plus in $\sigma^{s_\I+1}$. Since $\I$ is the last interval selected before step $T$ that flips $x$, we see that no interval containing $\I$ can be selected after step $s_\I$ and flipped to minus. Combined with Lemma \ref{Lemma: aux_1}, it implies that all spins of $\sigma^T$ in $\rho_{\frac{3}{2}}(\I)$ are still plus. 
    
    Now, let $\ell\geq 0$ and $x\in\Z$ be such that $\I=\I_\ell(x)$. For any $1\leq |k|\leq M_\ell/c_1$ and any interval $\B_{k}=\I_\ell(x+
 {16k})$, we have $|\B_{k}^-(\sigma^{s_\I})| \leq \frac{2^\ell}{M_\ell}$ (since $\I$ is plus favored with respect to $\sigma^{s_\I}$). Without loss of generality, we only consider $k\ge 0$. We want to apply induction on $k$ to show that \begin{equation}\label{eq: induction hypothesis in weak favor proof}
     \left|\cup_{t=s_\I}^T\B_k^-(\sigma^t)\right|\le \frac{c_1 2^\ell}{M_\ell}.
 \end{equation} The case $k=0$ holds since $\I$ remains all plus after step $s_\I$. Now we suppose \eqref{eq: induction hypothesis in weak favor proof} holds for $0\leq k\leq m-1$, and wish to prove \eqref{eq: induction hypothesis in weak favor proof} for $k = m$. To this end, we claim that there does not exist an interval $\mathrm{C}$ flipped to minus after step $s_\I$ such that $|\B_m\cap \mathrm{C}^c|\leq \frac{|\B_m|}{16}$.
 If such an interval $\mathrm{C}$ exists, 
 then $|\mathrm{C}|\ge 2^\ell$ and hence $|\rho_{\frac{3}{2}}(\mathrm{C})\cap \B_{m-1}|\ge \frac{2^\ell}{2}-\frac{2^\ell}{16}\ge 7\cdot 2^{\ell-4}$. By the definition of minus favored (recall that $\mathrm{C}$ is flipped to minus and recall the balancing procedure), we get that $\rho_{\frac{3}{2}}(\mathrm{C})$ is all minus in $\sigma^{s_\mathrm{C}+1}$. Thus $\B_{m-1}^-(\sigma^{s_\mathrm{C}+1})\ge 7\cdot 2^{\ell-4}$ which contradicts the induction hypothesis \eqref{eq: induction hypothesis in weak favor proof} for $k=m-1$, completing the verification of the claim.

We now prove \eqref{eq: induction hypothesis in weak favor proof} using the above claim and Lemma~\ref{Lemma: not_much_flips}. Let $T_m+1$ be the first step that an interval $\mathrm{C}$ with $|\B_m\cap \mathrm{C}^c|\leq \frac{|\B_m|}{16}$ was selected in the balancing procedure (if no such interval exists, then take $T_m=T$). Then $\B_m$ is tame with respect to $\sigma^{s_\I}$ up to step $T_m$. Thus, by Lemma~\ref{Lemma: not_much_flips},  \begin{equation}\label{eq: eq: induction hypothesis in weak favor proof 1}
     \left|\cup_{t=s_\I}^{T_m}\B_m^-(\sigma^t)\right|\le \frac{c_1 2^\ell}{M_\ell}.
 \end{equation}
Furthermore, letting $\mathrm{C}$ be the interval selected at step $T_m+1$, we see from our aforementioned claim that $\mathrm{C}$ is flipped to plus. Thus, we get by Lemma~\ref{Lemma: aux_1} that $\sigma^{T_m+t}\vert_{\rho_{\frac{3}{2}}(\mathrm{C})}$ is constant for all $t\ge 1$. Moreover, since no interval $\mathrm{D}$ with $|\B_m\cap \mathrm{D}^c|\leq \frac{|\B_m|}{16}$ (in particular, no interval containing $\mathrm{C}$) was selected and flipped to $-1$, we must have $\sigma^{T_m+t}\vert_{\rho_{\frac{3}{2}}(\mathrm{C})} = +1$ for all $t\geq 1$.   Combined with the fact that $\B_{m}\subset\rho_{\frac{3}{2}}(\mathrm{C})$ and \eqref{eq: eq: induction hypothesis in weak favor proof 1}, this
completes the proof of \eqref{eq: induction hypothesis in weak favor proof} for $k=m$. 
\end{proof}
\subsubsection{Proof of Theorem~\ref{thm-main}}\label{sec: proof of main thm}

In this subsection, we prove Theorem \ref{thm-main} assuming Propositions~\ref{Prop: Energy_bound_1}, \ref{Prop: Energy_bound_2} and Propositions \ref{prop: free energy control}, \ref{prop: entropy argument}, whose proofs are postponed until Sections \ref{sec: energy bound} and \ref{sec: entropy bound} respectively. Fix \begin{equation}\label{eq: def of theta}
    \theta \coloneqq \min\{2 - \alpha -10\delta, \log_{3.9}(2)\}> \frac{1}{2}.
\end{equation}
Moreover, for all $A,B \subset \Z$ we define $J(A,B) = \sum_{x\in A, y\in B}J_{xy}$.

\begin{proposition}\label{Prop: Energy_bound_1}
    Recall Definition \ref{def: Peierls map}. There is a constant $c_2>0$ depending only on $\alpha$ such that for any configuration $\sigma\in\Omega^+_{\Lambda}$, 
    \begin{equation*}
       J(A_\sigma, A_\sigma^c) \geq c_2 |\I_{\sigma}|^\theta.
    \end{equation*}
\end{proposition}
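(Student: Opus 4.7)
The plan is to combine the plus-isolated structure of $\I_\sigma$ (the final output of the balancing procedure) with the key interaction lemma, Lemma~\ref{Lemma: interacton_2}, that governs balanced configurations. First, I would extract the following facts from Section~\ref{sec: balancing_properties}: writing $\I_\sigma = \I_\ell$, this interval is plus isolated in $\sigma^S$, so (i) $\sigma^S_y=+1$ whenever $1 \le d(y,\I_\sigma) < 2^{\ell-1}$, (ii) its $2M_\ell$ closest neighboring $\ell$-intervals are plus dense, and (iii) $\sigma^S$ is balanced inside $\rho_{\frac{3}{2}}(\I_\sigma)$ (by Lemma~\ref{lem: procedure stops}). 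Since $\sigma^S_0=-1$, we also have $0 \in A_\sigma$, so $A_\sigma\neq\emptyset$. Together, (i)--(iii) describe a highly structured environment around $A_\sigma$ that one can exploit.

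The natural strategy is to split contributions according to the size of $|A_\sigma|$. In the ``dense minus'' regime $|A_\sigma| \ge c\,|\I_\sigma|^{\theta+\alpha-1}$, a direct weight-sum argument using the external plus region supplied by (i) and (ii) shows that each $x\in A_\sigma$ contributes at least $c_\alpha\,2^{\ell(1-\alpha)}$ to the interaction with sites outside $\I_\sigma$, yielding
\[
J(A_\sigma,A_\sigma^c) \;\ge\; c_\alpha\,|A_\sigma|\cdot |\I_\sigma|^{1-\alpha} \;\ge\; c_2\,|\I_\sigma|^{\theta},
\]
using $\theta \le 2-\alpha-10\delta$ to guarantee $\theta+\alpha-1<1$ so that this threshold on $|A_\sigma|$ is compatible with $|A_\sigma|\le|\I_\sigma|$. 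In the complementary ``sparse minus'' regime $|A_\sigma|\ll|\I_\sigma|$, the interior of $\I_\sigma$ is mostly plus, and property (iii) combined with the presence of at least one plus spin in $\rho_{\frac{3}{2}}(\I_\sigma)$ (coming from the buffer outside $\I_\sigma$) allows me to invoke Lemma~\ref{Lemma: interacton_2} on $\sigma^S$ restricted to $\rho_{\frac{3}{2}}(\I_\sigma)$. This should convert the balanced-plus-abundant structure into a substantial interaction between $A_\sigma$ and the plus sites in $\I_\sigma^+(\sigma^S)\subset A_\sigma^c$, matching the bound $c_2|\I_\sigma|^\theta$.

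The principal obstacle I anticipate is calibrating the two regimes so as to produce precisely the exponent $\theta=\min(2-\alpha-10\delta,\,\log_{3.9}(2))$. The appearance of $\log_{3.9}(2)$ strongly signals a multiscale recursion—presumably embedded in Lemma~\ref{Lemma: interacton_2}, whose proof in turn uses the purely one-dimensional Lemma~\ref{Lemma: Sequence_01}—in which each doubling of scale buys a multiplicative factor of roughly $3.9$ in the interaction count. Threading this multiscale estimate through the localized quantity $J(A_\sigma,A_\sigma^c)$, while properly accounting for the plus-favored border of $\I_\sigma$ and any residual minuses produced during the balancing procedure inside the interior, is where most of the technical work will lie; in particular, one must check that boundary contributions from sites just outside $\I_\sigma$ do not disturb the bound supplied by Lemma~\ref{Lemma: interacton_2}.
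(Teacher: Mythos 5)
Your proposal is essentially the same as the paper's proof, with a slightly different way of setting up the dichotomy. The paper splits simply on whether $A_\sigma = \I_\sigma$ or not: in the first case $\sigma^S$ is constant $-1$ on $\I_\sigma$, so Lemma~\ref{Lemma: interacton_2} is unavailable (it requires $\sigma^S$ non-constant on $\I_\sigma$) and the paper instead invokes the elementary Lemma~\ref{Lemma: interacton_1} on $\rho_{3/2}(A_\sigma)$, which has $|A_\sigma|$ pluses in the buffer and $|A_\sigma|$ minuses inside, yielding $c|\I_\sigma|^{2-\alpha}\ge c|\I_\sigma|^\theta$; in the second case it applies Lemma~\ref{Lemma: interacton_2} directly, using that $\sigma^S$ is balanced in $\rho_{3/2}(\I_\sigma)$ by Lemma~\ref{lem: procedure stops}. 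You instead split at the threshold $|A_\sigma|\ge c|\I_\sigma|^{\theta+\alpha-1}$, handling the dense side with a per-site buffer estimate of order $|A_\sigma|\cdot|\I_\sigma|^{1-\alpha}$ and the sparse side with Lemma~\ref{Lemma: interacton_2}. Both are correct, and the two buffer arguments produce the same $|\I_\sigma|^{2-\alpha}$-type bound; the paper's split is just cleaner because ``$A_\sigma\neq\I_\sigma$'' is exactly the hypothesis of Lemma~\ref{Lemma: interacton_2}, so no numeric calibration of a density threshold is needed. One imprecision to flag: you say Lemma~\ref{Lemma: interacton_2} becomes applicable because there are plus spins in $\rho_{3/2}(\I_\sigma)$, but its hypothesis requires $\sigma^S$ to be non-constant \emph{on $\I_\sigma$ itself}; this does hold in your sparse regime because sparsity forces $A_\sigma\neq\I_\sigma$, but the wording as written glosses over that point.
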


\begin{proposition}\label{Prop: Energy_bound_2}
     For any configuration $\sigma\in\Omega^+_\Lambda$, 
     \begin{equation}\label{eq: hamiltonian bound}
     H^+_{\Lambda;0}(\sigma) - H^+_{\Lambda;0}(\tau_{A_\sigma}(\sigma)) \geq J(A_\sigma, A^c_\sigma).
     \end{equation}
\end{proposition}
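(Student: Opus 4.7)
The plan is to compute the Hamiltonian difference directly and then control the correction coming from deviations of $\sigma$ from an ``ideal'' configuration. Since $\tau_{A_\sigma}$ flips only sites in $A_\sigma$, bonds with both endpoints in $A_\sigma$ or both in $A_\sigma^c$ cancel out; expanding \eqref{Eq: Hamiltonian} with $\eta \equiv +1$ and $\varepsilon = 0$ yields
\begin{equation*}
H^+_{\Lambda,0}(\sigma) - H^+_{\Lambda,0}(\tau_{A_\sigma}(\sigma)) = -4 \sum_{x \in A_\sigma,\, y \in \Lambda \setminus A_\sigma} J_{xy} \sigma_x \sigma_y - 2 \sum_{x \in A_\sigma,\, y \in \Lambda^c} J_{xy} \sigma_x,
\end{equation*}
the factors $4$ versus $2$ coming from double-counting of ordered pairs inside $\Lambda$ against single counting across the boundary.

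Next, I would compare to the ideal case $\sigma \equiv -1$ on $A_\sigma$ and $\sigma \equiv +1$ on $\Lambda \setminus A_\sigma$, in which the right-hand side reduces to $4 J(A_\sigma, \Lambda \setminus A_\sigma) + 2 J(A_\sigma, \Lambda^c) \geq 2 J(A_\sigma, A_\sigma^c)$. This is twice the bound we want, so there is a factor-of-two slack available to absorb ``bad'' bonds, i.e., bonds $(x,y)$ where $\sigma_x \sigma_y^* = +1$ (with $\sigma_y^* = \sigma_y$ for $y \in \Lambda$ and $\sigma_y^* = 1$ for $y \in \Lambda^c$). Each such bond reverses its contribution, creating a deficit of at most $8 J_{xy}$ for interior bonds and $4 J_{xy}$ for boundary bonds; the task thus reduces to bounding the total $J$-weight of bad bonds by a strictly smaller fraction of $J(A_\sigma, A_\sigma^c)$ than the slack.

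To bound the bad bonds I will exploit the structure of $A_\sigma = \I_\sigma^-(\sigma^S)$. By Definition~\ref{def: Peierls map} and Lemma~\ref{lem: procedure stops}, $\I_\sigma$ is plus favored with respect to $\sigma^S$, so $|A_\sigma| \leq |\I_\sigma|/M_{\ell_\sigma}$, every site within distance $2^{\ell_\sigma - 1}$ of $\I_\sigma$ is $+1$ in $\sigma^S$, and the $2M_{\ell_\sigma}$ nearest $\ell_\sigma$-intervals are plus dense in $\sigma^S$. Proposition~\ref{Prop: balanced_intervals} further forces every discrepancy $\sigma_x \neq \sigma^S_x$ to lie inside a weakly favored interval of $\sigma^S$, whose size and density are controlled by Lemma~\ref{Lemma: not_much_flips}. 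Combined with the polynomial decay $J_{xy} = |x-y|^{-\alpha}$ with $\alpha > 1$, the bad-bond contribution at each dyadic distance from $\I_\sigma$ is controllable, and the tail over far distances is summable.

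The main obstacle will be the multi-scale accounting: bad bonds can occur at every dyadic scale around $\I_\sigma$, and the total deficit across all scales must remain strictly below the slack. A consistency check in the extreme case $\sigma \equiv -1$ on $\Lambda$, where $A_\sigma = \Lambda$, the left-hand side equals $2 J(\Lambda, \Lambda^c)$, and $J(A_\sigma, A_\sigma^c) = J(\Lambda, \Lambda^c)$, shows the bound is tight up to the slack factor; thus the multi-scale estimate cannot be lossy at any single scale, which is the delicate step in the argument.
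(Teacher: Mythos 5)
Your framework matches the paper's: expand the Hamiltonian difference, observe that the ideal configuration ($\sigma \equiv -1$ on $A_\sigma$, $+1$ elsewhere) would yield $2J(A_\sigma, A_\sigma^c)$, and absorb the deficit from ``bad'' bonds (pairs $x\in A_\sigma$, $y\in A_\sigma^c$ with $\sigma_x=\sigma_y$) into the remaining slack. You also correctly identify the relevant structural facts: plus-favoredness of $\I_\sigma$ with respect to $\sigma^S$, and weak favoredness of flipping intervals via Proposition~\ref{Prop: balanced_intervals}. But you stop precisely at the step that constitutes the proof; acknowledging that the multi-scale accounting ``cannot be lossy at any single scale'' is not the same as carrying it out.

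Two concrete gaps. First, the bad bonds split into qualitatively different pieces that need different treatments. Pairs with $\sigma_x=\sigma_y=\sigma^S_y=-1$ and $y\in\I_\sigma^c$ (genuine remaining minuses of $\sigma^S$ outside $\I_\sigma$) are a separate contribution, handled in the paper by Lemma~\ref{Lemma: First_interaction}, which exploits the low minus-density in the $2M_n$ nearest $n$-intervals surrounding $\I_\sigma$ and compares $J(\mathrm{E}_i^-(\sigma^S),A_\sigma)$ to $J(\mathrm{E}_{i-1}^+(\sigma^S),A_\sigma)$; your proposal does not address this case. Second, your picture that the flipped-site deficit is ``controllable at each dyadic distance from $\I_\sigma$'' misdescribes the geometry: the flipped sites in $F(\pm,\sigma)$ are not organized in shells around $\I_\sigma$ but around their own flipping intervals $\I^x\in\mathscr{F}_\ell(\pm,\sigma)$, which can sit anywhere inside $\I_\sigma$. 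Controlling them requires summing over all such intervals at all scales $\ell$, splitting them into negligible vs.\ non-negligible ones, establishing the separation bound \eqref{eq: non-negligible distance} for non-negligible intervals, and invoking Lemmas~\ref{lem: fake interval expansion}, \ref{Lemma: Interaction_far_from_I}, and \ref{Lemma: interacton_close_to_I} to dominate $J(\I^+(\sigma^{s_\I}),A_\sigma^c)$ by an $O(M_\ell^{1-\alpha})$ multiple of localized pieces of $J(A_\sigma,A_\sigma^c)$. Citing Proposition~\ref{Prop: balanced_intervals} and Lemma~\ref{Lemma: not_much_flips} gets you the raw density inputs, but the bookkeeping that converts them into a total deficit strictly below the slack is the substance of the argument and is absent here.
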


Proposition \ref{Prop: Energy_bound_2} shows that flipping spins in $A_\sigma$ will gain energy in the interaction term. In order to control the term from the external field in the Hamiltonian, we want to flip the external field on $A_\sigma$ as in \cite{Ding2021}. Similar to $\sigma$, for any set $A\subset \Z$ we define $\tau_A(h) \coloneqq ( (-1)^{\mathbbm{1}_{A}(x)}h_x)_{x\in\Z}$. We define the error function 
\begin{equation}\label{Eq: Delta}
\Delta_A(h) \coloneqq -\frac{1}{\beta}\log{\frac{Z_{\Lambda; \beta, \varepsilon}^{+}(h)}{Z_{\Lambda; \beta, \varepsilon}^{+}(\tau_{A}(h))}},
\end{equation}
where $Z_{\Lambda; \beta, \varepsilon}^{\eta}(h)$, for $\eta \in \{-1, 1\}^\Z$,  is the \textit{partition function} (note that $H^\eta_{\Lambda, \epsilon h}(\cdot)$ can be naturally regarded as a function of $\sigma \in \{-1, 1\}^\Lambda$)
\begin{equation*}
    Z_{\Lambda; \beta, \varepsilon}^{\eta}(h)\coloneqq \sum_{\sigma\in\{-1, 1\}^\Lambda} e^{-\beta H_{\Lambda; \varepsilon h}^{\eta}(\sigma)}.
\end{equation*}

The key property required for the error functions as in \eqref{Eq: Delta} is their subgaussian behavior, proved in \cite{Ding2021}. \begin{lemma}\cite[Lemma 3.1]{Ding2021}\label{Lemma: Concentration.for.Delta.General}
    For any $A, A^\prime \Subset \mathbb{Z}$ and $\lambda>0$, we have 
\begin{equation}\label{Eq: Tail.of.the.diff.of.Deltas}
     \mathbb{P}(|\Delta_{A}(h) - \Delta_{A^\prime}(h)|>\lambda \mid h_{(A \cup A^\prime)^c}) \leq  2e^{-\frac{{\lambda^2}}{{8\varepsilon^2|A \Delta A^\prime|}}},
\end{equation}
where $A\Delta A^\prime$ is the symmetric difference and $h_{(A \cup A^\prime)^c}$ denotes the external field on ${(A \cup A^\prime)^c}$.
\end{lemma}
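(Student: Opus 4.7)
The plan is to reduce the two-set difference $\Delta_A - \Delta_{A'}$ to a single-set quantity $\Delta_B$ with $B = A\Delta A'$, and then apply Gaussian Lipschitz concentration. Writing $F(h) = -\beta^{-1}\log Z_{\Lambda;\beta,\varepsilon}^+(h)$ so that $\Delta_A(h) = F(h) - F(\tau_A h)$, the first step is the algebraic identity
\[
\Delta_A(h) - \Delta_{A'}(h) \;=\; F(\tau_{A'}h) - F(\tau_A h) \;=\; \Delta_B(\tau_{A'}h).
\]
This follows because sign-flip operators compose like symmetric differences: $\tau_A = \tau_{A'}\circ\tau_B$, so $\tau_A h = \tau_B(\tau_{A'}h)$. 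Conditional on $h_{(A\cup A')^c}$, the coordinates $h_y$ with $y\in A\cup A'$ are i.i.d.\ centered Gaussians, and since $\tau_{A'}$ only flips signs inside $A\cup A'$, Gaussian sign-symmetry gives $\tau_{A'}h \stackrel{d}{=} h$ conditionally. Hence it suffices to prove the bound for $|\Delta_B(h)|$.

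Next I would further condition on $h_C$ with $C = A\cap A'$, so that everything outside $B$ is frozen and only $h_B$ (a vector of $|B|$ i.i.d.\ standard Gaussians) remains random. Two facts then do the job. First, since $h_B \stackrel{d}{=} -h_B$ conditionally on $h_{B^c}$, the random variables $F(h)$ and $F(\tau_B h)$ have identical conditional expectations, whence $\mathbb{E}[\Delta_B(h)\mid h_{B^c}] = 0$. Second, a direct computation (differentiating under the sum defining $Z$) gives $\partial_{h_x}F(h) = -\varepsilon \langle \sigma_x\rangle_{\mu^+_{\Lambda;\beta,\varepsilon h}} \in [-\varepsilon,\varepsilon]$; for $x\in B$, the chain rule then yields $\partial_{h_x}\Delta_B(h) = \partial_{h_x}F(h) + \partial_{h_x}F(\tau_B h)$, of absolute value at most $2\varepsilon$. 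Consequently $\Delta_B(\cdot)$, viewed as a function of $h_B$ only, is Lipschitz with constant at most $2\varepsilon\sqrt{|B|}$ in Euclidean norm.

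Applying the Borell--TIS Gaussian Lipschitz concentration inequality to $\Delta_B$ in the variable $h_B$, together with the vanishing conditional mean, yields
\[
\mathbb{P}\bigl(|\Delta_B(h)| > \lambda \,\big|\, h_{B^c}\bigr) \;\leq\; 2 e^{-\lambda^2/(8\varepsilon^2 |B|)}.
\]
Since the right-hand side is uniform in $h_C$, I can integrate $h_C$ out while keeping $h_{(A\cup A')^c}$ fixed, and combining with the Step~1 identity gives the claimed bound. The conceptual obstacle I expect is precisely the first step: spotting the identity $\Delta_A - \Delta_{A'} = \Delta_B(\tau_{A'}h)$ and combining it with Gaussian sign-symmetry. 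This is what sharpens the variance from the naive $\varepsilon^2|A\cup A'|$ (which a direct Lipschitz estimate on $\Delta_A - \Delta_{A'}$ would give, since the partial derivatives in $h_{A\cap A'}$ are also of order $\varepsilon$) down to the advertised $\varepsilon^2 |A\Delta A'|$. After that, everything is a standard application of Gaussian concentration.
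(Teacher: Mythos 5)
Your proof is correct. Note that the paper itself does not prove this lemma---it cites \cite{Ding2021}---but your argument is precisely the one used there: factor $\tau_A = \tau_{A'}\circ\tau_{A\Delta A'}$ so that $\Delta_A - \Delta_{A'} = \Delta_{A\Delta A'}\circ\tau_{A'}$, use Gaussian sign-symmetry (conditionally on $h_{(A\cup A')^c}$) to replace $\tau_{A'}h$ by $h$ and to get $\mathbb{E}[\Delta_B\mid h_{B^c}]=0$, bound $|\partial_{h_x}\Delta_B|\le 2\varepsilon$ for $x\in B$ via $\partial_{h_x}F = -\varepsilon\langle\sigma_x\rangle$, and conclude by Gaussian Lipschitz concentration with constant $2\varepsilon\sqrt{|B|}$.
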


\begin{definition}\label{Def: bad event}
    Let $\mathcal{A}(\I)$ denote the collection of subsets $A\subset \I$ such that there exists a balanced configuration $\sigma$ on $\I$ satisfying $A=\I^-(\sigma)$. In addition, for any $Q>0$ we define $\mathcal{A}_Q(\I)$ to be the collection of subsets $A\in \mathcal{A}(\I)$ such that $J(A,A^c)\in [Q,2Q)$. 
\end{definition}
\begin{definition}
Suppose that $\I_n$ is an $n$-interval containing the origin. Let $\hat{\cA}(\I_n)$ be the collection of subsets $A\in \mathcal{A}(\I_n)$ such that $J(A,A^c)\ge c_2\cdot 2^{\theta n}$.
    We define the good event to be
\begin{equation}\label{bad_event_1d}
    \mathcal{E}(\I_n)\coloneqq \left\{\Delta_{A}(h) \leq \frac{J(A,A^c)}{10}, \text{ for all }A\in \hat{\cA}(\I_n)\right\}.
\end{equation}
\end{definition}
The next proposition implies that the probability of the bad event  $\mathcal{E}(\I_n)^c$ is small.
\begin{proposition}\label{prop: free energy control}    
    Suppose that $\I_n$ is an $n$-interval containing the origin. Then there exists $c_3\coloneqq c_3(\alpha)$ such that $$\mathbb{P}(\mathcal{E}(\I_n)^c)\leq c_3^{-1}\exp(-c_3\frac{2^{(2\theta-1-2\delta)n}}{\varepsilon^2}).$$ 
\end{proposition}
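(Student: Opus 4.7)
The plan is to decompose the bad event dyadically in $Q = J(A,A^c)$ and then run a multi-scale coarse-graining argument in the spirit of \cite{FFS84}, combining the entropy input from Proposition~\ref{prop: entropy argument} with the Gaussian concentration from Lemma~\ref{Lemma: Concentration.for.Delta.General}. Specifically, writing
\begin{equation*}
\mathcal{E}(\I_n)^c \subset \bigcup_{i\ge 0} \bigcup_{A \in \mathcal{A}_{Q_i}(\I_n)} \left\{ \Delta_A(h) > \tfrac{Q_i}{10} \right\}, \qquad Q_i \coloneqq 2^i c_2\, 2^{\theta n},
\end{equation*}
with $i$ running from $0$ up to the (polynomial in $2^n$) trivial maximum, a union bound reduces the task to bounding, for each fixed $Q = Q_i$, the probability $\mathbb{P}(\exists A \in \mathcal{A}_Q(\I_n):\Delta_A(h) > Q/10)$. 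Once this is shown to decay like $\exp(-c\,Q^2/(\varepsilon^2 2^{n+2\delta n}))$, summing over the dyadic $Q \ge c_2 2^{\theta n}$ is dominated by the smallest $Q$ and gives the claimed bound.

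For each fixed $Q$, I would approximate every $A \in \mathcal{A}_Q(\I_n)$ by a nested sequence $A = A^{(0)}, A^{(1)}, \dots, A^{(K)}$, where $A^{(j)}$ is a coarsening of $A$ at spatial scale $2^{r_j}$ for a suitably chosen strictly increasing sequence $0 = r_0 < r_1 < \cdots < r_K \asymp n$ (e.g., $A^{(j)}$ is a union of $2^{r_j}$-blocks determined by the local majority of $A$), with $A^{(K)}$ trivial. Since $\Delta_\emptyset = 0$, the telescoping identity
\begin{equation*}
\Delta_A(h) = \sum_{j=0}^{K-1}\left(\Delta_{A^{(j)}}(h) - \Delta_{A^{(j+1)}}(h)\right)
\end{equation*}
combined with any partition $Q/10 = \sum_j t_j$ of the target threshold reduces the event to the union of events $\{|\Delta_{A^{(j)}}-\Delta_{A^{(j+1)}}|>t_j\}$. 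Lemma~\ref{Lemma: Concentration.for.Delta.General} applied conditionally on $A^{(j+1)}$ gives, for each choice of the coarser set,
\begin{equation*}
\mathbb{P}\!\left(|\Delta_{A^{(j)}}(h)-\Delta_{A^{(j+1)}}(h)|>t_j \,\Big|\, h_{(A^{(j)}\cup A^{(j+1)})^c}\right) \le 2\exp\!\left(-\tfrac{t_j^2}{8\varepsilon^2 N_j}\right),
\end{equation*}
where $N_j$ is a uniform bound on $|A^{(j)} \Delta A^{(j+1)}|$ (which, by the coarse-graining construction, is supported near the scale-$r_j$ boundary of $A$ and therefore controlled by $Q = J(A,A^c)$).

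A union bound over the possible refinements $A^{(j)}$ of $A^{(j+1)}$ costs an entropy factor $\mathcal{N}_j$, bounded via Proposition~\ref{prop: entropy argument} applied at scale $r_j$; the balanced condition forces $\mathcal{N}_j$ to grow only like $\exp(C\,2^{-\delta r_j}\, \#\{\text{relevant }r_j\text{-blocks}\})$, and the scale-$r_j$ block count is itself bounded by $Q$ up to a polynomial factor. The main obstacle, and the heart of the argument, is to choose the scales $r_j$ and the thresholds $t_j$ so that $\sum_j t_j \le Q/10$ while simultaneously keeping $t_j^2/(\varepsilon^2 N_j)$ at every scale dominant over the entropy $\log \mathcal{N}_j$ and at least of order $2^{(2\theta - 1 - 2\delta)n}/\varepsilon^2$. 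The $\delta n$-slack built into $M_\ell = M_0 2^{\delta \ell}$ and into the definition of ``balanced'' is exactly what allows such a choice: it produces a geometric gain across scales so that the scale-by-scale contributions telescope, leaving the dominant bound $\exp(-c\,Q^2/(\varepsilon^2 2^{(1+2\delta)n}))$ at the smallest scale. Plugging in $Q \asymp 2^{\theta n}$ and summing over dyadic $Q$ then yields the proposition.
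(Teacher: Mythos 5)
Your high-level architecture matches the paper's: dyadic decomposition over $Q = J(A,A^c)$, a nested coarse-graining, telescoping $\Delta_A$, Gaussian concentration from Lemma~\ref{Lemma: Concentration.for.Delta.General}, and a scale-by-scale union bound controlled by the entropy input from Proposition~\ref{prop: entropy argument}. But there is a substantive quantitative gap in your account of the entropy, and a structural problem with your suggested coarsening.

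You estimate the entropy at scale $r_j$ as $\exp\bigl(C\,2^{-\delta r_j}\, \#\{\text{relevant }r_j\text{-blocks}\}\bigr)$ with block count $\lesssim Q$ up to polynomial factors. That is not what Corollary~\ref{cor: entropy argument whole level} gives: the correct bound is $E(\ell, Q, \I_n) \leq c_5^{-1}\exp(c_5 Q/2^{\ell\theta})$, with suppression $2^{-\theta\ell}$ (which comes from the energy estimate Lemma~\ref{Lemma: interacton_2} applied to each mixed $(\ell+1)$-interval), not $2^{-\delta\ell}$. The distinction is fatal for your version: with $Q \asymp 2^{\theta n}$ and $\ell$ near $n$, your stated entropy would be $\exp\bigl(c\,2^{(\theta-\delta)n}\bigr)$, while the Gaussian exponent obtained from the telescoping is only of order $2^{(2\theta - 1 - 2\delta)n}/\varepsilon^2$; since $\theta < 1 + \delta$, we have $\theta - \delta > 2\theta - 1 - 2\delta$ and the entropy would overwhelm the concentration. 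The $\delta$-slack actually plays a different role: it is used to choose the per-scale thresholds $t_\ell \asymp Q/2^{\ell\delta}$ so that $\sum_\ell t_\ell \lesssim Q$ while each $t_\ell$ is still large enough at scale $\ell \approx n$. The symmetric difference is likewise not ``controlled by $Q$'' alone but satisfies $|A_\ell \Delta A_{\ell+1}| \leq \tfrac{6Q}{\overline{c}_2 2^{(\ell+1)\theta}} 2^\ell$, and this $\ell$-dependent factor is what makes the per-scale Gaussian exponents sum to the stated bound. Separately, for the union bound to cost only $E(\ell, Q, \I_n)$, the coarse set $A^{(j)}$ must be a function of $\Psi_{r_j}(A)$; the paper takes $A_\ell := \bigcup\{\I_\ell \in \mathcal{I}_\ell^0 : \I_\ell \cap A \neq \emptyset\}$, which is determined by $\Psi_\ell(A)$ and monotone in $\ell$. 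Your proposed majority-rule coarsening has neither property, since $\Psi_\ell(A)$ records only whether an $\ell$-interval is full, empty, or mixed — not its occupation fraction — so Proposition~\ref{prop: entropy argument} would not bound the number of possible coarsenings. Finally, for the record, the dominant scale is $\ell \approx n$ rather than the smallest, and the rate is linear in $Q$, namely $\exp(-cQ/(\varepsilon^2 2^{n(1-\theta+2\delta)}))$; your quadratic rate happens to coincide with this only at the minimal $Q \asymp 2^{\theta n}$.
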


Now, it suffices to control the entropy, as incorporated in Proposition~\ref{prop: entropy argument} below.
Recall that we have a partition of $\mathbb Z$ into $\ell$-intervals as $\mathcal{I}_\ell^{0}=\big\{ [x\cdot 2^\ell,(x+1)\cdot 2^\ell)\big\}_{x\in \mathbb Z}$. We start with some definitions.

\begin{definition}\label{Def: Definition of Psi}
Suppose that $\I_n$ is an $n$-interval.    For any $A\in \mathcal{A}(\I_n)$ and $0\le  \ell\le n$, we define $\Psi_{\ell}(A):\mathcal{I}_\ell^{0}\rightarrow\{-1,0,1\}$ as:   \begin{equation}\label{eq: defi of psi}
    \Psi_{\ell}(A,\I_\ell(x))=\left\{\begin{aligned}
        -1~~~~&\text{if } \I_\ell(x)\subset A;\\
        1~~~~&\text{if } \I_\ell(x)\cap A=\emptyset;\\
        0~~~~&\text{otherwise}.
    \end{aligned}\right.
    \end{equation}
\end{definition}
See Figure~\ref{Fig: Psi} for an illustration of $\Psi(A)$. 
\begin{figure}[ht]
    \centering
    \input{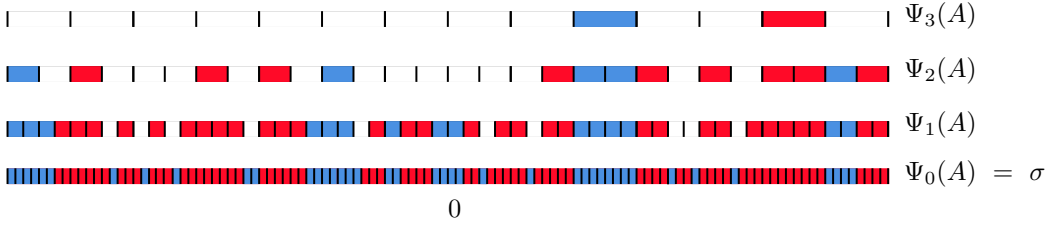}
    \caption{Considering a configuration $\sigma$ in an interval $\I$, and taking $A=\I^-(\sigma)$, the picture depicts, from bottom to top, $\Psi_0(A), \Psi_1(A), \Psi_2(A)$ and $\Psi_3(A)$. For every $\ell=0,1,2,3$, intervals $\I_\ell$ are painted \ter{red} if $\Psi_\ell(A, \I_\ell) = 1$, painted \teb{blue} if $\Psi_\ell(A, \I_\ell) = -1$ and painted white otherwise. We write $\sigma = \Psi_0(A)$ since both functions attribute the same value to each site.}
    \label{Fig: Psi}
\end{figure}
\begin{proposition}\label{prop: entropy argument}
Suppose that $\I_n$ is an $n$-interval containing the origin.
   Let $E(\ell,Q,\I_n) \coloneqq |\{\Psi_\ell(A): A\in \mathcal{A}_Q(\I_n)\}|$ denote the number of images  in $\Psi_\ell\left(\mathcal{A}_Q(\I_n)\right)$ (here $\Psi_\ell(A)$ is treated as a map), and recall our definition of $\theta$ from \eqref{eq: def of theta}. Then there exists a constant $c_4>0$ such that the following inequality holds for any  $M\ge 1$ and $\ell\le n-4$:
    \begin{equation}\label{eq: entropy bound from l+1 to l}
        E(\ell,Q,\I_n)\le E(\ell+1,Q,\I_n)\times \exp\big(\frac{c_4Q}{2^{\ell \theta}}\big).
    \end{equation}
\end{proposition}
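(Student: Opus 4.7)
The plan is a multiplicity-type coarse-graining, bounding the preimages of the natural projection $\Psi_\ell(A)\mapsto \Psi_{\ell+1}(A)$. Observe that each $(\ell+1)$-interval $\I_{\ell+1}\in\mathcal{I}^{0}_{\ell+1}$ decomposes as the disjoint union of two $\ell$-intervals, and the pair of $\Psi_\ell$-values on these subintervals is constrained by $\Psi_{\ell+1}(A,\I_{\ell+1})$: a value $\pm 1$ at scale $\ell+1$ forces both subintervals to take the same value, while a value $0$ allows $3^2-2=7$ admissible pairs (excluding $(+1,+1)$ and $(-1,-1)$, which would collapse $\Psi_{\ell+1}$ to $\pm 1$). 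Letting $N(A):=|\{\I_{\ell+1}\in\mathcal{I}^{0}_{\ell+1}:\Psi_{\ell+1}(A,\I_{\ell+1})=0\}|$ be the number of ``boundary'' $(\ell+1)$-intervals, the coarse-graining yields the key inequality
\[
E(\ell,Q,\I_n) \le E(\ell+1,Q,\I_n)\cdot 9^{\sup_{A\in\mathcal{A}_Q(\I_n)} N(A)}.
\]

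It remains to prove the energy-based estimate $\sup_{A\in\mathcal{A}_Q(\I_n)} N(A) \le CQ/2^{\ell\theta}$, which combined with the above yields the proposition after absorbing $C\ln 9$ into $c_4$. The idea is to charge each boundary $(\ell+1)$-interval $\I_{\ell+1}$ a disjoint energy contribution of at least $c\cdot 2^{\ell\theta}$ from $J(A,A^c)$. In the ``balanced'' regime where both $|A\cap\I_{\ell+1}|$ and $|A^c\cap\I_{\ell+1}|$ are a constant fraction of $2^{\ell+1}$, a direct count of internal pair interactions gives at least $c\cdot 2^{(\ell+1)(2-\alpha)}\ge c\cdot 2^{\ell\theta}$, using $\theta \le 2-\alpha-10\delta$. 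In the sparse regime (one population much smaller than $2^{\ell+1}/M_{\ell+1}$), the balance property of $A$ (Definition~\ref{def: isolated interval}) forbids $\I_{\ell+1}$ from being plus- or minus-isolated, so a failure of Definition~\ref{def: favored interval}(II) yields a companion $(\ell+1)$-interval of the minority spin at distance $\lesssim M_{\ell+1}\cdot 2^{\ell+1}$. The cross-interaction between that companion and the bulk of $\I_{\ell+1}$ is of order $2^{\ell(2-\alpha)}/M_{\ell+1}^{1+\alpha}\ge c\cdot 2^{\ell\theta}$, since the slack $10\delta$ in the definition of $\theta$ absorbs the polynomial factors in $M_\ell = M_0\, 2^{\delta\ell}$.

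The main obstacle is carrying out the charge assignment cleanly in the sparse regime. When only Definition~\ref{def: favored interval}(I) fails, one obtains merely an isolated neighboring minus spin rather than a dense cluster, yielding insufficient direct energy; to handle this, one must iterate the balance condition down to a finer scale until a bona fide Definition~\ref{def: favored interval}(II)-failure is reached. One must also control the overcounting arising from a single companion cluster being charged by multiple nearby sparse boundary intervals; the $O(M_{\ell+1})$ overcounting can be absorbed into $C$ (hence into $c_4$) without spoiling the exponent, again thanks to the $10\delta$ slack. Putting everything together, $\sup_A N(A)\cdot c\cdot 2^{\ell\theta} \lesssim J(A,A^c) < 2Q$, giving $\sup_A N(A)\le CQ/2^{\ell\theta}$ and completing the proof.
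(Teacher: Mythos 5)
Your outer layer is exactly the paper's: project $\Psi_\ell(A) \mapsto \Psi_{\ell+1}(A)$, observe that $\Psi_\ell(A,\B)$ is determined by $\Psi_{\ell+1}(A,\hat{\B})$ whenever the latter is $\pm 1$, so the preimage multiplicity is at most $9^{N(A)}$ (the paper writes $3^{2|\mathscr{I}(A)|}$), where $N(A)=|\mathscr{I}(A)|$ is the number of level-$(\ell+1)$ intervals with value $0$. The whole burden is then the bound $\sup_A N(A)\lesssim Q/2^{\ell\theta}$, and this is where your argument has a genuine gap.

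The paper obtains this bound in one step by invoking Lemma~\ref{Lemma: interacton_2}: the configuration $\sigma$ witnessing $A\in\mathcal{A}_Q(\I_n)$ is balanced on $\I_n$, hence balanced on $\rho_{3/2}(\hat{\B})$ for every $\hat{\B}\in\mathscr{I}(A)$, and $\sigma$ is non-constant on $\hat{\B}$ since $\Psi_{\ell+1}(A,\hat{\B})=0$; Lemma~\ref{Lemma: interacton_2} then gives $J\bigl(A\cap\rho_{3/2}(\hat{\B}),\,A^c\cap\rho_{3/2}(\hat{\B})\bigr)\ge \overline{c}_2\,2^{(\ell+1)\theta}$, and summing over $\hat{\B}$ (with a factor-$3$ overcount since $\rho_{3/2}(\hat{\B})$ meets only adjacent intervals) yields $N(A)\le 6Q/(\overline{c}_2 2^{(\ell+1)\theta})$. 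You, instead, attempt to re-derive this per-interval energy lower bound from first principles by a balanced/sparse case split. This is essentially attempting to re-prove Lemma~\ref{Lemma: interacton_2} inline, and the places you yourself flag as the ``main obstacle'' — (a) iterating the balance condition to finer scales when only condition \textbf{(I)} fails, and (b) controlling the overcounting when one companion cluster is charged by many sparse boundary intervals — are exactly the hard content that Lemma~\ref{Lemma: interacton_2}'s induction, together with the $\lambda$-good sequence bound of Lemma~\ref{Lemma: Sequence_01}, is built to resolve. As written, your sparse-regime charge is not disjoint, the ``iterate to a finer scale'' step has no termination argument, and the claimed $O(M_{\ell+1})$ overcounting factor is not a constant (it grows like $M_0 2^{\delta(\ell+1)}$), so it cannot be ``absorbed into $C$'' — it would have to be absorbed into the exponent via the $\delta$-slack, which requires a quantitative accounting that you have not carried out. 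The fix is simple: instead of re-deriving the estimate, apply Lemma~\ref{Lemma: interacton_2} to each $\hat{\B}\in\mathscr{I}(A)$ and use the factor-$3$ overcounting bound when summing.
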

\begin{corollary}\label{cor: entropy argument whole level}
Suppose that $\I_n$ is an $n$-interval containing the origin. There exists a constant $c_5>0$ such that the following inequality holds for any  $M\ge 1$ and $\ell\le n-3$:
    \begin{equation}\label{eq: entropy bound}
        E(\ell,Q,\I_n)\le  c_5\exp\big(\frac{c_5Q}{2^{\ell \theta}}\big).
    \end{equation}
\end{corollary}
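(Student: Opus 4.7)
The plan is to iterate Proposition \ref{prop: entropy argument} upward from level $\ell$ to the base level $n-3$, where the entropy is trivially bounded, and then to sum the resulting geometric series using $\theta > 1/2$ from \eqref{eq: def of theta}. Applying Proposition \ref{prop: entropy argument} successively at $j = \ell, \ell+1, \ldots, n-4$ would give
\begin{equation*}
E(\ell, Q, \I_n) \le E(n-3, Q, \I_n) \cdot \prod_{j=\ell}^{n-4} c_4^{-1} \exp\!\left(\frac{c_4 Q}{2^{j\theta}}\right).
\end{equation*}
For the base case, the $n$-interval $\I_n$ (of length $2^n$) meets only $O(1)$ tiles of $\mathcal{I}_{n-3}^0$ (each of length $2^{n-3}$), and $\Psi_{n-3}$ takes values in $\{-1,0,1\}$ on each such tile, so $E(n-3, Q, \I_n)$ is bounded by a universal constant.

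The accumulated exponential is handled by the geometric sum: since $\theta > 1/2 > 0$, one has $\sum_{j=\ell}^{\infty} 2^{-j\theta} \le \tfrac{2^{-\ell\theta}}{1 - 2^{-\theta}}$, so the product of exponentials is at most $\exp(C Q / 2^{\ell \theta})$ with $C := c_4/(1 - 2^{-\theta})$ depending only on $c_4$ and $\theta$. Combining the three pieces and choosing $c_5$ with $c_5 \ge C$ in the exponent (using the slack to absorb the accumulated prefactor $(c_4^{-1})^{n-3-\ell}$ and the base-case constant into $c_5^{-1}$) yields the stated estimate. The only bookkeeping point of note is the absorption of this prefactor --- which is routine once Proposition \ref{prop: entropy argument} is in hand --- so no genuine obstacle arises beyond the proposition itself.
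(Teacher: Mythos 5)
Your proof is essentially identical to the paper's: iterate Proposition \ref{prop: entropy argument} from level $\ell$ up to $n-4$, bound the base case $E(n-3,Q,\I_n)$ by $3^{16}$ (since $A\subset\I_n\subset[-2^n,2^n)$ meets at most $16$ tiles of $\mathcal{I}_{n-3}^0$, each assigned one of three values by $\Psi_{n-3}$), sum the resulting geometric series using $\theta>0$, and fold the accumulated constants into $c_5$. The paper writes exactly this chain of inequalities, so your decomposition and estimates match it step for step.
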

\begin{proof}
    Since $\I_n$ is an $n$-interval containing the origin, we get that $A\subset [-2^n,2^n)$ for any $A \in \mathcal{A}(\I_n)$. Recall Definition~\ref{Def: Definition of Psi}. Thus we get that $E(n-3,Q,\I_n)\le 3^{16}.$ Hence applying \eqref{eq: entropy bound from l+1 to l} iteratively, we get that for $\ell\le n-4$ 
    \begin{equation*}
        E(\ell,Q,\I_n)\le3^{16}\cdot \prod_{k=\ell}^{n-4}\exp\big(\frac{c_4Q}{2^{k \theta}}\big)\le  c_5\exp\big(\frac{c_5Q}{2^{\ell \theta}}\big)
    \end{equation*} with some $c_5$ depending on $c_4$ and $\theta$.
\end{proof}
Now we are ready to prove Theorem~\ref{thm-main}.
\begin{proof}[\textbf{Proof of Theorem~\ref{thm-main}}]
Let $\mathcal{J}_n$ denote the collection of $n$-intervals $\I$ such that $0\in \I$, and take $\mathcal{H}_n \coloneqq \cap_{\I\in\mathcal{J}_n}\mathcal{E}(\I)$. Letting $\mathcal{H}=\cap_{n=0}^{\infty}\mathcal{H}_n$, we derive from Proposition \ref{prop: free energy control} and a simple union bound that (note that $|\mathcal J_n| \leq 16$)
\begin{equation}\label{eq: external field control for all levels}
    \P(\mathcal{H})\ge 1-16\sum_{n=0}^{\infty}c_3^{-1}\exp(-c_3\frac{2^{(2\theta-1-2\delta)n}}{\varepsilon^2})\ge 1-C_1\exp(\frac{1}{C_1\vareps^2}).
\end{equation}
Recall Definition \ref{def: Peierls map}. For any integer $n\ge 0$ and $Q>0$, let $\Sigma_{Q,n}$ denote the collection of configurations such that \begin{enumerate}
    \item $J(A_\sigma,A_\sigma^c)\in [Q,2Q)$.
    \item $\I_\sigma$ is an $n$-interval.
\end{enumerate}
By \eqref{eq: hamiltonian bound}, \eqref{Eq: Delta} and \eqref{bad_event_1d} (as well as the definition of $\mathcal H$), we get that for any $\sigma\in \Sigma_{Q,n}$ and $h\in \mathcal{H}$ \begin{equation}\label{eq: probability bound after flipping}
    \mu_{\Lambda;\beta,\vareps h}^+(\sigma)\le \mu_{\Lambda;\beta,\vareps \tau_{A_\sigma}(h)}^+(\tau_{A_\sigma}(\sigma))\times \exp(-\beta\cdot 0.9Q).
\end{equation}

By Definition~\ref{Def: Definition of Psi}, $\Psi_0(A,\I_0(x)) = 1-2\cdot \mathbbm{1}_{A}(x)$, so $A \rightarrow \Psi_0(A)$ is clearly a bijection. In addition, since $A_\sigma=\I_\sigma^-(\sigma^S)$ and $\sigma^S$ is balanced in $\I_\sigma$ (by Lemma \ref{lem: procedure stops}), we get that $A_\sigma\in \mathcal{A}(\I_\sigma)$. 
Hence the number of choices for $A_{\sigma}$ with $\sigma\in \Sigma_{Q,n}$ is at most $\sum_{\I\in\mathcal{J}_n}E(0,Q,\I)$.
By Corollary \ref{cor: entropy argument whole level}, we have  $E(0,Q,\I)\leq c_5\exp(c_{5}Q)$ for any $\I\in \mathcal{J}_n$. Combining with the fact that $|\mathcal{J}_n|\le 16$, we get that the multiplicity of the map $(\sigma,h) \rightarrow \big(\tau_{A_\sigma}(\sigma),\tau_{A_\sigma}(h)\big)$ is at most  $16c_5\exp(c_{5}Q)$ to 1. Hence, summing over $\sigma\in \Sigma_{Q,n}$ in \eqref{eq: probability bound after flipping} we get that, for $h\in \mathcal{H}$, 
\begin{align}\label{eq: probability bound for level M,n}
   \sum_{\sigma\in \Sigma_{Q,n}}\mu_{\Lambda;\beta,\vareps h}^+(\sigma) &\le 16c_5\exp(c_{5}Q)\times \exp(-\beta\cdot 0.9Q).
\end{align} 
By Proposition \ref{Prop: Energy_bound_1}, we have $\Sigma_{Q,n}=\emptyset$ whenever $Q<c_22^{n\theta}$. Letting $Q_{n,\ell}\coloneqq c_22^{n\theta+\ell}$, we have $\{\sigma\in\Omega_\Lambda^+ : |\I_{\sigma}|=2^n \}=\cup_{\ell=0}^{\infty}\Sigma_{Q_{n,\ell},n}$. Therefore, from \eqref{eq: probability bound for level M,n} we get that, for $h\in \mathcal{H}$,
\begin{align}
   \sum_{\substack{\sigma : |\I_{\sigma}|=2^n}}\mu_{\Lambda;\beta,\vareps h}^+(\sigma) &\le \sum_{\ell=0}^{\infty}16c_5\exp(-(0.9\beta-c_{5})Q_{n,\ell})\nonumber \\
        &\le C_1^{-1}\exp(-C_1(0.9\beta-c_{5})2^{n\theta}),\nonumber
\end{align}
where the second inequality holds under the assumption that $0.9\beta - c_5 > 0$. Recalling Lemma \ref{lem: procedure stops} and summing over $n$ we obtain
\begin{align}
\mu_{\Lambda;\beta,\vareps h}^+(\sigma_0=-1)&\le\sum_{n=0}^{\infty}C_1^{-1}\exp(-C_1(0.9\beta-c_{5})2^{n\theta})\nonumber \\
    &\le C_2^{-1}\exp(-C_2(0.9\beta-c_{5})).\nonumber
\end{align} 
Combining this with \eqref{eq: external field control for all levels}, we get the desired result by letting $\beta>0$ large enough.
\end{proof}
\begin{remark}\label{rmk: extension to pure 1d model}
    Notice that Propositions \ref{Prop: Energy_bound_1}, \ref{Prop: Energy_bound_2} and \ref{prop: entropy argument} hold for $1<\alpha<2$ with $\delta=\frac{2-\alpha}{20}$ and $\theta=\min\{2 - \alpha -10\delta, \log_{3.9}(2)\}> 0$. Thus the long-range order without disorder in this regime follows from a standard Peierls argument.
\end{remark}
\begin{remark}
    Notice that our argument does not rely on the state space being $\{-1, 1\}$, so our results can be extended to the $q$-Potts model with $q\ge 3$. One can define the notion of $p$-favored for $p=1,\cdots,q$, repeat the balancing procedure, and get the long-range order for the $q$-Potts model at a low enough temperature.
\end{remark}

\subsection{Energy bounds}\label{sec: energy bound}
This subsection is dedicated to proving the energy estimation, namely, Propositions \ref{Prop: Energy_bound_1} and \ref{Prop: Energy_bound_2}. We state some auxiliary lemmas before proving Proposition \ref{Prop: Energy_bound_1}.

\begin{lemma}\label{Lemma: interacton_1}
There exists a constant $\overline{c}_1\coloneqq \overline{c}_1(\alpha)$ such that, for any interval $\I\subset\Z$ and any configuration $\sigma\in\Omega$, if $\min\{|\I^-(\sigma)|, |\I^+(\sigma)|\} =  m$, then 
\begin{equation*}
    J(\I^-(\sigma),\I^+(\sigma))\geq \overline{c}_1m^{2-\alpha}.
\end{equation*}
\end{lemma}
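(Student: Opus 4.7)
The plan is to get the bound on a per-spin basis: I will show that for each site in the minority, the sum of the long-range interactions with sites in the majority is at least of order $m^{1-\alpha}$, and then sum these contributions over the $m$ minority sites.

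Without loss of generality assume the pluses are the minority, so $|\I^+(\sigma)|=m\le |\I^-(\sigma)|$, and in particular $|\I|\ge 2m$. Fix $x\in \I^+(\sigma)$ and enumerate $\I\setminus\{x\}=\{y_1,y_2,\ldots\}$ in order of increasing distance from $x$ (with ties broken arbitrarily). The first observation, which relies crucially on the one-dimensional structure of $\I$, is that in either the interior or boundary case we have $|x-y_k|\le k$ for every admissible $k$, since at most two sites of $\I$ can sit at any given integer distance from $x$.

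Next, since $\sigma$ has only $m-1$ pluses in $\I\setminus\{x\}$, among the sites $y_1,\ldots,y_{m+j-1}$ (which exist because $|\I|\ge 2m$) at most $m-1$ can be plus, so at least $j$ are minus. Consequently, if $i_1<i_2<\cdots<i_m$ are the indices of the first $m$ minuses in the list, then $i_j\le m+j-1$ for each $j$, and therefore $|x-y_{i_j}|\le m+j-1$. A standard integral comparison, using $1<\alpha<3/2$, then yields
\begin{equation*}
\sum_{y\in\I^-(\sigma)}|x-y|^{-\alpha}\ge \sum_{j=1}^m (m+j-1)^{-\alpha}\ge \int_m^{2m}t^{-\alpha}\,dt=\frac{1-2^{1-\alpha}}{\alpha-1}\,m^{1-\alpha}.
\end{equation*}

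Finally, summing over $x\in \I^+(\sigma)$ and using that each unordered plus-minus pair is counted twice in the sum of the lemma, I obtain
\begin{equation*}
\sum_{x,y\in\I}\mathbbm{1}_{\{\sigma_x\ne\sigma_y\}}J_{xy}
=2\sum_{x\in\I^+(\sigma)}\sum_{y\in\I^-(\sigma)}|x-y|^{-\alpha}
\ge 2\cdot \frac{1-2^{1-\alpha}}{\alpha-1}\cdot m\cdot m^{1-\alpha},
\end{equation*}
which gives the claim with $\overline{c}_1=\overline{c}_1(\alpha)$ an explicit constant.

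There is no real obstacle here; the argument is elementary once the per-site estimate is in place. The only point that requires a bit of care is the distance bound $|x-y_k|\le k$, which uses the one-dimensional arrangement of $\I$ (in higher dimensions the analogous ordering would give distances of order $k^{1/d}$ and a different exponent), and the mild edge case $m=0$ or $|\I|=2m$, which is handled trivially since the sum is nonnegative.
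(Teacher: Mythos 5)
Your proof is correct and follows essentially the same strategy as the paper: a per-site lower bound of order $m^{1-\alpha}$ on each minority spin's interaction with the majority, obtained by showing that the $j$-th closest opposite spin lies within distance $m+j-1$, followed by a sum over the $m$ minority sites. The paper compresses the distance-counting step into the remark that the distance set is "dominated by $\{m+1,\ldots,2m\}$," which is exactly the ordered-index bookkeeping you spell out explicitly.
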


\begin{proof}
    We can assume without loss of generality that $|\I^-(\sigma)|\leq |\I^+(\sigma)|$. It is enough to show that, for all $x\in \I^-(\sigma)$ 
\begin{equation}\label{eq: Int_x_with_plus}
    J(\{x\},\I^+(\sigma))\geq \overline{c}_1 {m}^{1-\alpha},
\end{equation}
since we can then sum both sides over $x\in\I^-(\sigma)$ to get the desired bound. As we assumed $m=|\I^-(\sigma)|\leq |\I^+(\sigma)|$,
it is obvious that $\{|x-y|: y\in \I^+(\sigma)\}$ is dominated by the set $\{m+1, \ldots, |\I|\}$ (here the domination is in the sense of the domination between the empirical measures of these two sets), implying that
\begin{equation*}
    J(\{x\},\I^+(\sigma))\geq \sum_{k={m}+1}^{|\I|}k^{-\alpha} \geq \sum_{k={m}+1}^{2m}k^{-\alpha} \geq \overline{c}_1 {m}^{1-\alpha}.\qedhere
\end{equation*}
\end{proof}
\begin{corollary}\label{Cor: F_A}
    For any $A\subset \Z$, 
    \begin{equation*}
        J(A, A^c)\geq \overline{c}_1 |A|^{2-\alpha}.
    \end{equation*}
\end{corollary}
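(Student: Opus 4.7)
The plan is to obtain Corollary \ref{Cor: F_A} as a direct consequence of Lemma \ref{Lemma: interacton_1} applied to a carefully chosen interval. I first restrict attention to the case $|A| = m < \infty$ (the infinite case will follow either by monotone limits on finite truncations of $A$ or be trivial since $J(A,A^c)$ diverges when the right-hand side does). I would choose a finite interval $\I \subset \Z$ with $A \subset \I$ and $|\I \setminus A| \geq m$; e.g., $\I = [\min A - m,\, \max A + m] \cap \Z$ works. Define $\sigma \in \Omega$ by $\sigma_x = -1$ on $A$ and $\sigma_x = +1$ elsewhere. Then $|\I^-(\sigma)| = m$ and $|\I^+(\sigma)| \geq m$, so $\min\{|\I^-(\sigma)|,|\I^+(\sigma)|\} = m$, and Lemma \ref{Lemma: interacton_1} yields
\[
\sum_{x,y \in \I} \mathbbm{1}_{\{\sigma_x \neq \sigma_y\}} J_{xy} \geq \overline{c}_1 m^{2-\alpha}.
\]

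The final step is the identity/inequality
\[
\sum_{x,y \in \I} \mathbbm{1}_{\{\sigma_x \neq \sigma_y\}} J_{xy} \;=\; 2 \sum_{x \in A,\, y \in \I \setminus A} J_{xy} \;\leq\; 2 \sum_{x \in A,\, y \in A^c} J_{xy} \;=\; 2\, J(A, A^c),
\]
where the first equality uses that the ordered-pair sum double-counts each unordered pair of opposite-sign vertices, and the inequality uses $\I \setminus A \subset A^c$ together with $J_{xy} \geq 0$. Combining these and absorbing the factor $1/2$ into the constant $\overline{c}_1$ (redefining it if necessary) gives $J(A, A^c) \geq \overline{c}_1 |A|^{2-\alpha}$.

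I do not anticipate any serious obstacle: the corollary is essentially Lemma \ref{Lemma: interacton_1} read in the weaker direction, with $A^c \supset \I \setminus A$ ensuring the inequality goes the right way, so no tightness is lost at this step. The only mildly delicate point is choosing $\I$ large enough to guarantee $|\I^+(\sigma)| \geq m$, which is handled by the explicit choice above.
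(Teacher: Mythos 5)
Your argument is correct and coincides with the paper's proof: the paper likewise takes a slightly enlarged interval $\I \supset A$ (the smallest one with $|\I| \geq 2|A|$, so that $|\I \setminus A| \geq |A|$), sets $\sigma = \mathbbm{1}_A - \mathbbm{1}_{A^c}$, and invokes Lemma~\ref{Lemma: interacton_1}. The one point you treat more carefully is the factor of $2$ from the ordered double sum, which the paper silently elides — absorbing it into $\overline{c}_1$ as you do is fine, and in fact the factor disappears if one reads the sum in Lemma~\ref{Lemma: interacton_1} as unordered, which is what its proof actually establishes.
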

\begin{proof}
    Let $\I$ be the smallest interval with $A\subset \I$ and $|\I|\ge 2|A|$. Applying Lemma~\ref{Lemma: interacton_1} for $\sigma$ with $\sigma_x = \mathbbm{1}_{A}(x) - \mathbbm{1}_{A^c}(x)$ yields the desired result (note that $|\I^\pm(\sigma)| \geq |A|$).
\end{proof}

When there are very few minuses or very few pluses in an interval, the lower bound on Lemma~\ref{Lemma: interacton_1} is not very useful. To get a better bound, we need to take advantage of the balanced property. To this end, it will be useful to approximate arbitrary intervals by $\ell$-intervals, as carried out in the next lemma.

\begin{lemma}\label{Lemma: approximate_interval}
    Let $\I$ be any interval. There exists $\ell\geq 0$ and an $\ell$-interval $\I_\ell= [a, b]$  such that $\I\subset \I_\ell$ and $\max{\{d(\I,a), d(\I,b)\}}\leq 0.7|\I|$ (see Figure~\ref{Fig: Lemma_3.3} for an illustration).
\end{lemma}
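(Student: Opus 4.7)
The plan is to choose a scale $\ell$ so that $2^\ell$ is a modest inflation of $|\I|$, and then place the $\ell$-interval on the lattice $2^{\ell-4}\Z$ of admissible left-endpoints so that $\I$ sits roughly in the middle of $\I_\ell$.

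Writing $\I = [p, q) \cap \Z$ with $L = |\I|$, I would pick $\ell \ge 0$ to be the smallest integer satisfying $2^\ell \ge \frac{16(L-1)}{15}$ (taking $\ell = 0$ when $L = 1$). This ensures that the set of admissible offsets
\[
\mathcal{A} = \big\{\, a \in 2^{\ell-4}\Z \,:\, a \le p \text{ and } a + 2^\ell > q - 1 \,\big\}
\]
is nonempty, since it is the intersection of $2^{\ell-4}\Z$ with a real interval of length $2^\ell - L + 1 \ge 2^{\ell-4}$. The minimality of $\ell$ further gives $2^\ell < \frac{32(L-1)}{15}$ for $L \ge 2$, which will bound the total slack.

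Among the elements of $\mathcal{A}$, I would choose the $a$ closest to the balanced target $p - \frac{2^\ell - L + 1}{2}$; the discretization error on this best choice is at most $2^{\ell-5}$. The two gaps then satisfy
\[
d(\I, a) = p - a, \qquad d(\I, a + 2^\ell) = (a + 2^\ell) - (q - 1),
\]
with each of them bounded by $\frac{2^\ell - L + 1}{2} + 2^{\ell - 5}$. For $L \ge 2$, substituting $2^\ell < \frac{32(L-1)}{15}$ yields
\[
\frac{2^\ell - L + 1}{2} + 2^{\ell - 5} \le \frac{17(L-1)}{30} + \frac{L-1}{15} = \frac{19(L-1)}{30} < 0.7\,L,
\]
and for $L = 1$ a direct check with $\ell = 0$ and $a = p - \frac{1}{2}$ gives max gap $\frac{1}{2} < 0.7$.

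The main obstacle is not depth but bookkeeping: the constant $0.7$ is tight enough that the scale-selection window between the two constraints (enough room so that a lattice point exists on the one hand; not so much slack that the gaps become too large on the other hand) has ratio only about $2.1$, which barely accommodates the ratio-$2$ spacing of consecutive powers of $2$. Keeping careful track of the one-unit asymmetry in the gap formula, arising from $\I$ being a set of integers with rightmost element $q - 1$ rather than $q$, is also necessary.
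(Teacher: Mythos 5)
Your proposal is correct and follows essentially the same strategy as the paper's proof: pick the scale $\ell$ so that $2^\ell$ lies in a window of width roughly a factor of two around $\frac{16}{15}|\I|$, then slide the $\ell$-interval along the lattice $2^{\ell-4}\Z$ of admissible left endpoints to center $\I$, and compute that each overhang is at most about $\frac{19}{30}|\I| < 0.7|\I|$. Your version is marginally tighter (working with $|\I|-1$ rather than $|\I|$ in the slack computation), but the decomposition and the numerics are the same.
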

\begin{proof}
    Take $\ell\geq 0$ satisfying $\frac{15}{8}2^{\ell-2}\leq |\I| \leq \frac{15}{8}2^{\ell-1}$. As $|\I|\leq 2^{\ell}-\frac{2^{\ell}}{16}$, we can find an $\ell$-interval $\I_{\ell}(x)$ covering $\I$. Supposing $x$ is the smallest integer with $\I\subset \I_{\ell}(x)$, we can always find a translation $\I_{\ell}(x+n)=[a,b]$, with $0\leq n\leq 15$, such that $|d(a, \I) - d(b,\I)|\leq \frac{2^{\ell}}{16}$. Thus, for this interval $\I_{\ell}(x+n)$, we have $d(a, \I) \leq \frac{1}{2}\left({2^{\ell} + 2^{\ell-4} - \frac{15}{8}2^{\ell -2}}\right)\leq \frac{7}{10}|\I|$. A similar estimation holds for $d(b, \I)$, concluding the proof.
\end{proof}

\begin{figure}[ht]
    \centering
    \tikzset{every picture/.style={line width=0.75pt}} 

\begin{tikzpicture}[x=0.75pt,y=0.75pt,yscale=-1,xscale=1]

\draw    (245,25) -- (245,45) ;
\draw  [color={rgb, 255:red, 255; green, 255; blue, 255 }  ,draw opacity=1 ][fill={rgb, 255:red, 155; green, 155; blue, 155 }  ,fill opacity=1 ] (245,50) -- (470,50) -- (470,80) -- (245,80) -- cycle ;
\draw    (245,50) -- (245,80) ;
\draw    (470,50) -- (470,80) ;
\draw    (470,80) -- (455,80) ;
\draw    (470,50) -- (455,50) ;
\draw    (405,25) -- (405,45) ;
\draw    (245,35) -- (405,35) ;
\draw [shift={(405,35)}, rotate = 180] [color={rgb, 255:red, 0; green, 0; blue, 0 }  ][line width=0.75]    (0,5.59) -- (0,-5.59)   ;
\draw [shift={(245,35)}, rotate = 180] [color={rgb, 255:red, 0; green, 0; blue, 0 }  ][line width=0.75]    (0,5.59) -- (0,-5.59)   ;
\draw    (245,90) -- (395,90) ;
\draw [shift={(395,90)}, rotate = 180] [color={rgb, 255:red, 0; green, 0; blue, 0 }  ][line width=0.75]    (0,5.59) -- (0,-5.59)   ;
\draw [shift={(245,90)}, rotate = 180] [color={rgb, 255:red, 0; green, 0; blue, 0 }  ][line width=0.75]    (0,5.59) -- (0,-5.59)   ;
\draw    (85,65) -- (615,65) (105,61) -- (105,69)(125,61) -- (125,69)(145,61) -- (145,69)(165,61) -- (165,69)(185,61) -- (185,69)(205,61) -- (205,69)(225,61) -- (225,69)(245,61) -- (245,69)(265,61) -- (265,69)(285,61) -- (285,69)(305,61) -- (305,69)(325,61) -- (325,69)(345,61) -- (345,69)(365,61) -- (365,69)(385,61) -- (385,69)(405,61) -- (405,69)(425,61) -- (425,69)(445,61) -- (445,69)(465,61) -- (465,69)(485,61) -- (485,69)(505,61) -- (505,69)(525,61) -- (525,69)(545,61) -- (545,69)(565,61) -- (565,69)(585,61) -- (585,69)(605,61) -- (605,69) ;
\draw [shift={(85,65)}, rotate = 180] [color={rgb, 255:red, 0; green, 0; blue, 0 }  ][line width=0.75]    (0,5.59) -- (0,-5.59)   ;
\draw    (260,50) -- (245,50) ;
\draw    (260,80) -- (245,80) ;
\draw    (405,25) -- (405,45) ;
\draw    (565,25) -- (565,45) ;
\draw    (405,35) -- (565,35) ;
\draw [shift={(565,35)}, rotate = 180] [color={rgb, 255:red, 0; green, 0; blue, 0 }  ][line width=0.75]    (0,5.59) -- (0,-5.59)   ;
\draw [shift={(405,35)}, rotate = 180] [color={rgb, 255:red, 0; green, 0; blue, 0 }  ][line width=0.75]    (0,5.59) -- (0,-5.59)   ;
\draw [color={rgb, 255:red, 255; green, 10; blue, 40 }  ,draw opacity=1 ][line width=2.25]    (505,65) -- (525,65) ;
\draw [color={rgb, 255:red, 255; green, 10; blue, 40 }  ,draw opacity=1 ][line width=2.25]    (185,65) -- (205,65) ;

\draw (316,12.4) node [anchor=north west][inner sep=0.75pt]    {$2^{\ell -1}$};
\draw (426,82.4) node [anchor=north west][inner sep=0.75pt]    {$\mathrm{I}$};
\draw (301,97.4) node [anchor=north west][inner sep=0.75pt]    {$\frac{15}{8} 2^{\ell -2}$};
\draw (486,12.4) node [anchor=north west][inner sep=0.75pt]    {$2^{\ell -1}$};
\draw (506,77.4) node [anchor=north west][inner sep=0.75pt]  [font=\scriptsize,color={rgb, 255:red, 255; green, 10; blue, 40 }  ,opacity=1 ]  {$\frac{2^{\ell }}{16}$};

\end{tikzpicture}
    \caption{The gray region delimits the interval $\I$. We take $\ell$ satisfying $\frac{15}{8}2^{\ell-2}\leq |\I| \leq \frac{15}{8}2^{\ell-1}$. The black line represents $\Z$ divided into intervals of length $2^{\ell}/16$. The $\ell$-interval $\I_{\ell}$ with endpoints in the red regions satisfies the desired inequality.}
    \label{Fig: Lemma_3.3}
\end{figure}
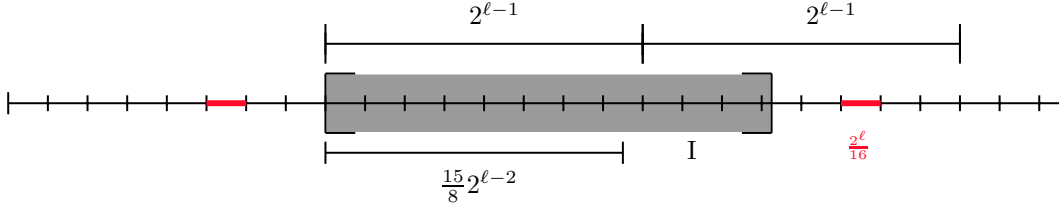

In Lemma \ref{Lemma: interacton_2}, we will show that there is a restriction on the distance between intervals covering the minus spins. To describe that precisely we use the next definition and lemma.

\begin{definition}\label{def: lambda good}
    Given $\lambda>0$, we say that a sequence $\mathscr{P} = (p_i)_{i=1}^N \subset \{0,1\}^N$ is $\lambda$-\textit{good}, if for any interval $\I \subsetneq [1, N]$ with $\max_{i\in \I} p_i = 1$, there exists  $x\in [1,N]\setminus \I$ with $p_x=1$ and $d(x, \I)\leq \lambda|\I|+1$. 
\end{definition}

\begin{lemma}\label{Lemma: Sequence_01}
    Let $N\geq 1$ and $\mathscr{P}=(p_i)_{i=1}^N$ be an $\lambda$-good sequence with $p_1=p_N=1$, Then, $$|\{1\leq i\leq N: p_i = 1\}|\geq N^{\log_{\lambda+2}(2)}.$$
\end{lemma}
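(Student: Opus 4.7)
The plan is to set $\theta := \log_{\lambda+2}(2)$, so that $(\lambda+2)^\theta = 2$, and let $f(N)$ denote the minimum number of ones over all $\lambda$-good sequences of length $N$ starting and ending with $1$; I will prove $f(N) \geq N^\theta$ by strong induction on $N$. The base case $N \leq \lambda+2$ is immediate, since the endpoints force $f(N) \geq 2 \geq N^\theta$. For the inductive step with $N > \lambda+2$, list the $1$-positions as $1 = i_1 < \cdots < i_k = N$ with gaps $g_j := i_{j+1} - i_j$, pick $j^*$ maximizing $g_j$, and set $L = i_{j^*}$, $G = g_{j^*}$, $R = N - i_{j^*+1} + 1$, so that $L + G + R = N + 1$. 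Testing the $\lambda$-good condition against $\I = [1, L]$ and $\I = [i_{j^*+1}, N]$ yields $G \leq \lambda L + 1$ and $G \leq \lambda R + 1$.

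The main obstacle is the key claim that the two restrictions $\mathscr{P}|_{[1, L]}$ and $\mathscr{P}|_{[i_{j^*+1}, N]}$ are themselves $\lambda$-good, so that the inductive hypothesis applies to each. A priori this could fail, since the witness $x$ guaranteed by the $\lambda$-good property of some $\I \subset [1, L]$ might lie in $[L+1, N]$. I will verify the claim by splitting on $\I = [a, b] \subsetneq [1, L]$. When $b < L$, the $1$ at position $L$ itself serves as an interior witness: if the original witness $x$ lies in $[L+1, N]$, then $x \geq L+G$ and $x - b \leq \lambda|\I| + 1$ combine to force $L - b \leq \lambda|\I| + 1 - G \leq \lambda |\I|$. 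When $b = L$ (so $\I = [a, L]$ with $a \geq 2$) and the original witness lies in $[L+1, N]$, we get $G \leq \lambda|\I| + 1$; here the maximality of $G$ becomes essential, since if $i_{a'}$ denotes the largest $1$-position below $a$, then the gap $g_{a'} \leq g_{j^*} = G$, hence $a - i_{a'} \leq g_{a'} \leq \lambda|\I| + 1$, providing a valid witness at $i_{a'}$ inside $[1, L]$.

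Granting the key claim, the inductive hypothesis yields $|\{i : p_i = 1\}| \geq L^\theta + R^\theta$, and it remains to verify $L^\theta + R^\theta \geq N^\theta$. Assuming WLOG $L \leq R$, the constraints give $R \geq N - (\lambda+1)L$ and $L \leq N/(\lambda+2)$. The function $\tilde{h}(L) := L^\theta + (N - (\lambda+1)L)^\theta$ is concave on $[1, N/(\lambda+2)]$ since $\theta \in (0, 1)$, so it is minimized at the endpoints. At $L = N/(\lambda+2)$ one computes $\tilde{h} = 2(N/(\lambda+2))^\theta = N^\theta$ by the choice of $\theta$, and at $L = 1$ a short calculus check (equality at $N = \lambda+2$, with derivative in $N$ positive thereafter) shows $\tilde{h}(1) = 1 + (N - \lambda - 1)^\theta \geq N^\theta$ for all $N \geq \lambda + 2$. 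This closes the induction.
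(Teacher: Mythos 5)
Your proof follows the same route as the paper's: strong induction on $N$, decomposing at the longest run of zeros, showing the two restrictions remain $\lambda$-good by producing an interior witness via maximality of the gap, applying the inductive hypothesis, and finishing by minimizing $L^{\theta}+R^{\theta}$ with $\theta=\log_{\lambda+2}(2)$. Your verification that the restrictions stay $\lambda$-good is correct, and the case split $b<L$ versus $b=L$ is a clean way to organize the interior-witness argument; the paper achieves essentially the same thing via a leftmost tie-breaking convention for the witness.

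There is, however, a genuine gap in the final step. The line ``the constraints give $R\ge N-(\lambda+1)L$ and $L\le N/(\lambda+2)$'' is not correct: the first inequality holds, but $L\le N/(\lambda+2)$ does not follow from $L\le R$, $G\ge 1$, $L+G+R=N+1$, and $G\le\lambda L+1$, which only force $L\le N/2$. A concrete instance: with $\lambda=2$, $N=10$ and the $\lambda$-good sequence $1111101111$ one has (after relabeling so $L\le R$) $L=4$, $G=2$, $R=5$, yet $N/(\lambda+2)=2.5<L$. In this regime your surrogate $\tilde h(L)=L^{\theta}+(N-(\lambda+1)L)^{\theta}$ is not even defined once $L>N/(\lambda+1)$, and your concavity argument on $[1,N/(\lambda+2)]$ simply does not cover $L>N/(\lambda+2)$. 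The repair is a short case split: when $L>N/(\lambda+2)$ the binding lower bound on $R$ is $R\ge L$ rather than $R\ge N-(\lambda+1)L$, so $L^{\theta}+R^{\theta}\ge 2L^{\theta}>2\bigl(N/(\lambda+2)\bigr)^{\theta}=N^{\theta}$ directly; when $L\le N/(\lambda+2)$ your concavity argument applies as written. With this case added, the proof is complete and matches the paper's.
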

\begin{proof}
    Our proof is by induction. If $N=1$ then $p_1=1$ and the desired result holds. Then we assume the lemma holds for any $N \leq M-1$ (for some $M \geq 2$) and we next prove it for $N = M$. Let $\I=[n,m]$ be the largest interval such that $\max_{i\in \I} p_i=0$. We now prove that $\mathscr{P}_{[1,n)}\coloneqq (p_i)_{i=1}^n$ is $\lambda$-good.  Take any interval $\B\subsetneq [1,n)$ with $\max_{i\in \B}p_i = 1$, and take $x\in [1,M]\setminus \B$ with $p_x=1$ such that $d(x, \B) = d(\B, \{y\in [1,M]\setminus \B : p_y=1\})$. When there are two possible choices for $x$, we take the one on the left. As $\mathscr{P}$ is $\lambda$-good, we already know that $d(x, \B)\leq \lambda |\B| +1$, by the definition of $\lambda$-good, so we just need to show that $x\in [1,n)\setminus \B$. Otherwise, we have $x=m+1$. Since $p_1=p_{n-1}=1$ ($p_{n-1}=1$ follows from the maximality of $\I$), by the definition of $x$ and the assumption that $x = m+1$, we can assume that $B=[b,n)$ for some $b>1$. Moreover, as $d(x, B) = |\I|+1$,  we must have $\max_{i\in [b - |\I| -2, b)} p_i = 0$ (by $p_1=1$), contradicting the fact that $|\I|$ is the largest interval containing only $0$'s. The same argument shows that $\mathscr{P}_{(m,M]} \coloneqq (p_i)_{i=m}^M$ is also $\lambda$-good, and therefore, by the induction hypothesis
    \begin{align}\label{Eq: bound_1s}
       |\{x\in [1,M]: p_x=1\}| &=  |\{x\in [1,n): p_x=1\}| + |\{x\in (m,M]: p_x=1\}|\nonumber \\ 
        &\geq (n-1)^{\log_{\lambda +2}(2)} + (M-m-1)^{\log_{\lambda +2}(2)}.
    \end{align}
    Denoting $N_1=n-1$ and $N_2=M-m-1$, we can assume without loss of generality that $N_1\leq N_2$. As $\mathscr{P}$ is $\lambda$-good and $[1,n)\subsetneq [1,N]$ satisfies $\max_{i\in [1,n)} p_i = 1$, we must have $|\I| = d([1,n), (m,M])-1\leq \lambda N_1$. Hence $M\leq N_1(1+\lambda) + N_2$ and $(2+\lambda)N_2\ge M$. Plugging this into \eqref{Eq: bound_1s} we get 
    \begin{align}
        |\{x\in [1,N]: p_x=1\}| \geq  \left(\frac{M-N_2}{\lambda+1}\right)^{\log_{\lambda +2}(2)} + N_2^{\log_{\lambda +2}(2)}&\geq 2\left(\frac{M}{\lambda + 2}\right)^{{\log_{\lambda + 2}(2)}}\nonumber = M^{{\log_{\lambda + 2}(2)}},\nonumber
    \end{align}
    where in the last inequality we used the fact that $\left(\frac{M-N_2}{\lambda+1}\right)^{\log_{\lambda +2}(2)} + N_2^{\log_{\lambda +2}(2)}$ increases in $N_2$ when $N_2\ge \frac{M}{2+\lambda}$.
\end{proof}

\begin{lemma}\label{Lemma: interacton_2}
    Recall our definition of $\theta$ in \eqref{eq: def of theta}. Let $\I$ be an interval, and let $\sigma\in\Omega$ be a balanced configuration in $\rho_{\frac{3}{2}}(\I)$. If $\sigma$ is not constant on $\I$, then there is a constant $\overline{c}_2\coloneqq \overline{c}_2(\alpha)$ such that
    \begin{equation*}
        \sum_{x,y\in\rho_{\frac{3}{2}}{(\I)}}\mathbbm{1}_{\{\sigma_x\neq\sigma_y\}}J_{xy}\geq \overline{c}_2|\I|^\theta.
    \end{equation*}
\end{lemma}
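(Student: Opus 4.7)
The plan is to combine the $\lambda$-good growth Lemma~\ref{Lemma: Sequence_01} with the balanced hypothesis, using Lemma~\ref{Lemma: approximate_interval} to translate between general intervals and dyadic $\ell$-intervals. By symmetry and non-constancy on $\I$, I may assume there is a minus $y_*\in\I$ (and a plus somewhere in $\I$). First I would pass to an $n$-interval $\I_n \subset \rho_{3/2}(\I)$ with $|\I_n|$ comparable to $|\I|$ via Lemma~\ref{Lemma: approximate_interval}, so that we can work with a dyadic window for enumeration purposes.

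The central step is to produce a $\lambda$-good sequence on $\I_n$ with $\lambda$ slightly below $2$, matching the exponent $\log_{3.9}(2)$ appearing in $\theta$. A natural candidate is $p_x \coloneqq \mathbbm{1}_{\{\sigma_x = -1\}}$. To verify the $\lambda$-good property, take any sub-interval $\B \subsetneq \I_n$ containing a minus, approximate it by an $\ell$-interval $\B'\supset \B$ (Lemma~\ref{Lemma: approximate_interval}) with $|\B'|\le 2|\B|$, and use the balanced hypothesis: since $\B'$ contains a minus and $\rho_{M_\ell}(\B')\subset \rho_{3/2}(\I)$, $\B'$ cannot be plus isolated, so by Definition~\ref{def: favored interval} either (I) a minus lies within distance $2^{\ell-1}\le |\B|$ of $\B'$, or (II) one of the $2M_\ell$ neighboring $\ell$-intervals fails to be plus dense, hence contains a minus within distance $\asymp M_\ell |\B|$. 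Case (I) is benign; case (II) is the source of the main difficulty because a naïve reading gives $\lambda \sim M_\ell$, which is not uniform in $\ell$. I would overcome this by iterating inside case (II): the nearby non-plus-dense $\ell$-interval itself contains a sub-interval at a strictly smaller scale that again witnesses a minus via balancedness, and chaining such reductions collapses the effective $\lambda$ to a constant slightly below $2$ (with the slack absorbed into the $10\delta$ of $\theta$).

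Once $(p_x)_{x\in \I_n}$ (possibly after padding endpoints, or after replacing $p_x$ by the indicator that the $\ell_0$-interval containing $x$ is minus-occupied at a carefully chosen small scale $\ell_0$) is shown to be $\lambda$-good with $\lambda < 1.9$, Lemma~\ref{Lemma: Sequence_01} produces at least $|\I_n|^{\log_{\lambda+2}(2)} \gtrsim |\I|^{\log_{3.9}(2)}$ minuses in $\I_n$. Running the same argument symmetrically (since a plus also exists in $\I$) yields at least the same number of pluses. I then close the estimate in two regimes according to which term attains the minimum in $\theta$: if $\theta = 2-\alpha-10\delta$, the minority count $m$ is already $\gtrsim |\I|^{\theta/(2-\alpha)}$ after trivial bounds on $M_\ell$, and Lemma~\ref{Lemma: interacton_1} (applied to $\rho_{3/2}(\I)$, viewed as the covering interval containing both pluses and minuses with minimum count $m$) gives $\sum \mathbbm{1}_{\sigma_x\neq \sigma_y}J_{xy} \ge \overline{c}_1 m^{2-\alpha} \ge \overline{c}_2 |\I|^\theta$; if $\theta = \log_{3.9}(2)$, the $\lambda$-good count directly provides $\gtrsim |\I|^\theta$ minuses each of which has a plus within constant-multiple distance (from the case (I) branch of the above dichotomy), so each such minus contributes at least a constant to $\sum \mathbbm{1}_{\sigma_x\neq \sigma_y}J_{xy}$, again yielding the bound.

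The main obstacle will be the uniform $\lambda$ across scales in Step~2: the raw balanced condition gives $\lambda \sim M_\ell$, and the slack to reduce this to $\lambda < 2$ is where the $10\delta$ penalty in $\theta = \min\{2-\alpha-10\delta,\log_{3.9}(2)\}$ is absorbed. The correct way to execute this is almost certainly to not use the naked indicator of minuses but rather a coarse-grained version (minus-occupancy at a well-chosen scale), or to iterate case (II) down in scale until the distance constant becomes uniform; this bookkeeping, together with verifying that the resulting count of 1's is large enough to dominate the exponent $\theta$, is the one delicate calculation that would need to be done carefully.
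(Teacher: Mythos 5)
Your proposal identifies the right ingredients — Lemma~\ref{Lemma: Sequence_01}, Lemma~\ref{Lemma: approximate_interval}, Lemma~\ref{Lemma: interacton_1} — but assembles them in a way that leaves two genuine, load-bearing gaps relative to what the paper actually does, and I don't see how either can be repaired along the route you describe.

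First, the $\lambda$-good sequence. You correctly diagnose that the raw indicator $p_x=\mathbbm{1}_{\{\sigma_x=-1\}}$ is not $\lambda$-good with a scale-uniform $\lambda$, because condition \textbf{(II)} of Definition~\ref{def: favored interval} can fail only at a distance of order $M_\ell 2^\ell$. But the proposed fix — "iterate inside case (II), zoom into the distant non-plus-dense $\ell$-interval to find a minus at a smaller scale" — does not help: the culprit interval stays at distance $\sim M_\ell 2^\ell$ no matter how finely you resolve its interior, so the nearest minus is not getting closer to $\B$. The paper's remedy is structurally different: it first fixes a scale $m\approx n(1-C_1\delta)$ and splits into two cases. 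If some $\ell$-interval at scale $\ell\ge m$ inside $\I$ is minus-\emph{occupied}, Lemma~\ref{Lemma: interacton_1} applied directly to $\rho_{3/2}(\I)$ already gives $\gtrsim(2^{\ell}/M_\ell)^{2-\alpha}\ge\overline{c}_2|\I|^\theta$ and one is done. Otherwise every dyadic interval at scale $\ge m$ inside $\I$ is minus-vacant, hence plus dense; in that regime condition \textbf{(II)} is \emph{automatically} satisfied for every such dyadic $\mathrm D$, so the only way $\mathrm D$ can fail to be plus favored (which the balanced hypothesis forces) is that \textbf{(I)} fails — i.e.\ some spin within distance $|\mathrm D|/2$ is minus. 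After the Lemma~\ref{Lemma: approximate_interval} bookkeeping this is exactly what gives $\lambda=1.9$. The sequence is defined on the $m$-intervals (whether each contains a minus), not on individual spins, and the $\lambda$-bound is uniform precisely because the "far culprit from \textbf{(II)}" case has been removed by the earlier dichotomy. Your proposal has no analogue of this case split.

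Second, and independently, the closing step in your $\theta=\log_{3.9}(2)$ regime is quantitatively hopeless without a multi-scale induction. The $\lambda$-good count gives on the order of $k\gtrsim(2^{n-m}/M_n)^{\log_{3.9}(2)}$ witness $m$-intervals, not $|\I|^\theta$ minuses: the span of the minus-occupied $m$-intervals is only $\gtrsim 2^n/M_n$, and $2^{n-m}$ is polynomially smaller than $2^n$. If each witness contributed only a constant (as in "each minus has a plus within constant distance"), the total would be $\gtrsim(2^{n-m}/M_n)^{\log_{3.9}(2)}$, which is far below $2^{n\theta}$. The paper closes the gap by running an induction on $|\I|$: each witness $m$-interval $\B_i$ is itself a smaller balanced, non-constant interval, so the induction hypothesis charges it $\gtrsim\overline{c}_2 2^{m\theta}$ rather than $O(1)$, and the product $k\cdot 2^{m\theta}$ does beat $2^{n\theta}$ (via the inequality $C_1(\log_{3.9}(2)-\theta)\ge 1$, which is exactly what the choice $C_1=500$ buys). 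Your proposal never sets up an induction, so it never gets access to this amplified per-interval contribution. Relatedly, the split you propose according to which term of $\theta=\min\{2-\alpha-10\delta,\log_{3.9}(2)\}$ achieves the minimum is not a case distinction that the argument actually makes; the paper's dichotomy is configuration-driven (minus-occupied interval exists or not), and both exponents enter every invocation of the induction simultaneously.
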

\begin{proof}
    To simplify the notation let us write $\overline{\I}\coloneqq \rho_{\frac{3}{2}}(\I)$. Taking $N$ such that $2^N\le |\I|<2^{N+1}$, we want to show that
    \begin{equation}\label{eq: induction hypothesis in interaction_2}
        J(\overline{\I}^-(\sigma),\overline{\I}^+(\sigma))\geq \overline{c}_22^{N\theta+\theta}.
    \end{equation}
    Our proof is again by induction on $N$. When $N\le 1000\log_2(M_0)+1$, \eqref{eq: induction hypothesis in interaction_2} holds trivially by taking $\overline{c}_2$ sufficiently small depending on $M_0$. Now, given $n>1000\log_2(M_0)+1$, we suppose that \eqref{eq: induction hypothesis in interaction_2} holds for all intervals $\I$ with $2^N\leq |\I| < 2^{N+1}$ for all $N<n$. We will show that for $\I$ with $2^n\le |\I|< 2^{n+1}$ the inequality \eqref{eq: induction hypothesis in interaction_2} also holds with $N=n$. We can assume that $|\I| = 2^n$, since otherwise we take $\I'$ the subinterval of $\I$ with length $2^n$ such that $\sigma$ is not constant on $\I'$ and prove \eqref{eq: induction hypothesis in interaction_2} for $\I'$. Furthermore, we  can assume without loss of generality that $|\overline{\I}^-(\sigma)|\leq |\overline{\I}^+(\sigma)|$.
    
     Considering the scale $m= \lfloor n(1-C_1\delta) - C_1\log_2(M_0)\rfloor$ with $C_1=500$, we have $2^{n-m}\ge M_n^{C_1}$. By the assumption $n>1000\log_2(M_0)+1$, we get that $m>0$. For scales $\ell\geq m$, if there is a minus occupied $\ell$-interval $\I_\ell\subset \I$, as we are assuming $|\overline{\I}^-(\sigma)|\leq |\overline{\I}^+(\sigma)|$, we can apply Lemma \ref{Lemma: interacton_1} to $\I$ and get \eqref{eq: induction hypothesis in interaction_2}. 
     
     We can thus assume all $\ell$-intervals $\I_\ell\subset \I$ with $\ell\geq m$ are minus vacant. Under this assumption, if $\B\subset \I$ is an $\ell$-interval containing a minus with $\ell\geq m$ and $\rho_{M_\ell}(\I_\ell)\subset \I$, then there exists $x\in \overline{\I}^-(\sigma)\cap \B^c$ such that $d(x,\B)\leq 2^{\ell - 1}$; this is because $\B$ cannot be plus favored due to the balanced assumption for $\I$.
    
    Let $\{\B_1,\dots, \B_k\}\subset \mathcal{I}_m^0$ be a covering of $\overline{\I}^-(\sigma)$ by disjoint $m$-intervals, such that each $\B_i$ (for $1\leq i\leq k$) contains at least one minus spin. We want to use Lemma~\ref{Lemma: Sequence_01} to give a lower bound on $k$. Let $\mathcal{B} = \cup_{i=1}^k\B_i$ and let $\I_{\mathcal{B}}$ be the smallest interval that contains $\mathcal{B}$. We claim that $|\I_{\mathcal{B}}|\geq \frac{2^n}{5M_n}$.
    Otherwise, by Lemma \ref{Lemma: approximate_interval}, there exists an interval $\mathrm{D}=[a, b)\in\mathcal{I}$ containing $\I_{\mathcal{B}}$ such that  $|D| \leq d(\I_{\mathcal{B}},a)+ d(\I_{\mathcal{B}},b) + |\I_{\mathcal{B}}|\leq 2.4 |\I_{\mathcal{B}}|\leq 2.4\cdot\frac{2^n}{5M_n}$. Moreover, $\rho_{M_n}(\mathrm{D})\subset\overline{\I}$ (since $M_n|D|<2^{n-1}$) and $\sigma\vert_{\overline{\I}\setminus \I_{\mathcal{B}}}=1$, by the definition of $\mathcal{B}$. Thus $\mathrm{D}$ is a plus isolated interval in $\sigma$ which contradicts the balanced assumption for $\I$, yielding that  $|\I_{\mathcal B}|\geq \frac{2^n}{5M_n}$.
    
    Taking $K\coloneqq |\I_{\mathcal{B}}|/2^m$, we partition $\I_{\mathcal{B}} = \cup_{i=1}^K \mathrm{C}_i$ with $\{\mathrm{C}_1, \dots, \mathrm{C}_K\}\subset \mathcal{I}_m^0$ being a collection of disjoint $m$-intervals. Notice that $\{\B_1,\dots,\B_k\}\subset \{\mathrm{C}_1, \dots, \mathrm{C}_K\}$. We then define $\mathscr{P}_{\mathcal{B}} = (p_i)_{i=1}^K$ by taking $p_i = 1$ when $\mathrm{C}_i\in \{B_1, \dots, B_k\}$ and $p_i=0$ otherwise. We want to show that this sequence is $1.9$-good (recall Definition \ref{def: lambda good}). By the minimality of $\I_{\mathcal{B}}$, we get that $p_1=p_K=1$. Take any interval $[k_1, k_2]\subset [1, K]$ with $\max_{k_1\leq i\leq k_2}p_i = 1$. Let $\mathrm{C} = \cup_{i=k_1}^{k_2}\mathrm{C}_{i}$ be the union  of $m$-intervals associated to $(p_i)_{i=k_1}^{k_2}$. Let $L\geq m$ be such that $2^L\leq |\mathrm{C}|\leq 2^{L+1}$.
    
    By Lemma \ref{Lemma: approximate_interval}, there exists an interval $\mathrm{D}=[a,b]\in\mathcal{I}$,  such that $\max{\{d(a,\mathrm{C}), d(b,\mathrm{C})\}}\leq 0.7|\mathrm{C}|$ and $\mathrm{C}\subset \mathrm{D}$.  
    Since $\mathrm{C}$ contains a minus spin, so does $\mathrm{D}$. In addition, since all $\ell$-intervals in $\I$ with $\ell \geq m$ are assumed to be minus vacant, there must be a minus spin $x\in\Z\setminus \mathrm{D}$ with $d(x, \mathrm{D})\leq \frac{|\mathrm{D}|}{2}$, as otherwise $\mathrm{D}$ would be isolated (so in particular, $x\in \mathcal B \setminus \mathrm{C}$). Thus, by the triangle inequality we have $d(\mathrm{C}, \mathcal{B}\setminus \mathrm{C}) \leq \max{\{d(a,\mathrm{C}), d(b,\mathrm{C})\}} + d(\mathrm{D}, \mathcal{B}\setminus \mathrm{C}) \leq 0.7 |\mathrm{C}| + \frac{|\mathrm{D}|}{2} \leq 1.9|\mathrm{C}|$. This shows in particular that there exists $y\in [1,K]$ with $p_y = 1$ and $d(y, [k_1, k_2])\leq 1.9(k_2 - k_1+1)$, implying that $\mathscr{P}_{\mathcal{B}}$ is a $1.9$-good sequence. Therefore, Lemma~\ref{Lemma: Sequence_01} gives us
    \begin{equation*}
        k = |\{1\leq i\leq K: p_i = 1\}|\geq \left(\frac{|\I_{\mathcal{B}}|}{2^m}\right)^{{\log_{3.9}(2)}} \geq \left(\frac{2^{n-m}}{5M_n}\right)^{{\log_{3.9}(2)}},
    \end{equation*}
    where in the last inequality we used the lower bound $|\I_{\mathcal{B}}|\geq\frac{2^{n}}{5M_n} $. Let $\overline{\B}_i=\rho_{\frac{3}{2}}(\B_i).$ As the lemma holds for each $\B_i$ by the induction hypothesis, we use the fact that $\overline{\B}_i\cap \overline{\B}_{i+2}= \emptyset$ to conclude 
    \begin{align*}
        J(\overline{\I}^-(\sigma),\overline{\I}^+(\sigma))&\ge \sum_{i=1}^{\lceil k/2\rceil}J(\overline{\B}_{2i-1}^-(\sigma),\overline{\B}_{2i-1}^+(\sigma))\\&\ge
        \frac{k}{2}\cdot \overline{c}_22^{m\theta}\geq 
        \frac{\overline{c}_2}{2\cdot {5}^{\log_{3.9}(2)}}\cdot 2^{(n-m)(\log_{3.9}(2)-\theta)}\cdot M_{n}^{-\log_{3.9}(2)} \cdot 2^{n\theta} \\
        &\stackrel{(*)}{\geq} \frac{\overline{c}_2}{2\cdot {5}^{\log_{3.9}(2)}}\cdot M_{n}^{C_1\left(\log_{3.9}(2)-\theta\right)-\log_{3.9}(2)} \cdot 2^{n\theta} \geq \overline{c}_22^{n\theta+\theta},
    \end{align*}
    where $(*)$ comes from the fact that $2^{n-m}\ge M_n^{C_1}$ and in the last inequality we used that $C_1\Big(\log_{3.9}(2)\\ - \theta\Big)= 500(\log_{3.9}(2)-0.5)\ge 1$ and chose $M_0>0$ large enough depending on $\theta$.
\end{proof}

We are ready to prove Proposition \ref{Prop: Energy_bound_1}. 

\begin{proof}[\textbf{Proof of Proposition \ref{Prop: Energy_bound_1}}]
    Since $\I_\sigma$ is plus favored with respect to $\sigma^S$, we see that $\sigma^S$ is all plus in $\rho_{3/2}(\I_\sigma)\setminus \I_\sigma$. If $A_\sigma = \I_\sigma$ (i.e., $\sigma^S$ is all minus in $\I_\sigma$), then we must have $|\{x\in \rho_{\frac{3}{2}}(A_\sigma) : \sigma^S_x=+1\}| = |\{x\in \rho_{\frac{3}{2}}(A_\sigma) : \sigma^S_x=-1\}| = |A_\sigma|$. We can therefore bound
    \begin{equation*}
        J(A_\sigma,A_\sigma^c)\geq \sum_{x,y\in\rho_{\frac{3}{2}}(A_\sigma)}\mathbbm{1}_{\{\sigma^S_x\neq \sigma_y^S\}}J_{xy}\geq c_1^\prime|\I_\sigma|^{2-\alpha},
    \end{equation*}
    where the second inequality is due to Lemma \ref{Lemma: interacton_1}.

    If $A_\sigma\neq\I_\sigma$, then $\sigma^S$ is not constant on $\I_\sigma$. Since in addition $\I_\sigma$ is balanced with respect to $\sigma^S$, we can apply Lemma \ref{Lemma: interacton_2} to get the desired bound. 
\end{proof}

We now want to prove Proposition \ref{Prop: Energy_bound_2}. Again, we first introduce some auxiliary lemmas. 

\begin{lemma}\label{Lemma: First_interaction}
    For any configuration $\sigma\in\Omega^+$, 
    \begin{equation*}
        \sum_{\substack{x\in A_\sigma \\  y\in \I_\sigma^c}}\mathbbm{1}_{\{\sigma_x=\sigma_y^S=-1\}}J_{xy} \leq 0.1J(A_\sigma, A_\sigma^c).
    \end{equation*}
\end{lemma}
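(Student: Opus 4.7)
The plan is to first establish the structural fact that $\I_\sigma$, when nonempty, is plus favored with respect to $\sigma^S$. Indeed, the balancing procedure terminates only when no isolated interval can be selected, and the only isolated intervals excluded from selection are plus isolated intervals containing the origin; every minus isolated interval (whether or not it contains the origin) would have been selected and de-isolated by flipping its pluses to minuses, as would every plus isolated interval not containing the origin. Writing $|\I_\sigma| = 2^\ell$ and $M \coloneqq M_\ell$, condition \textbf{(I)} of Definition \ref{def: favored interval} then forces every $y \in \I_\sigma^c$ with $\sigma_y^S = -1$ to satisfy $d(y, \I_\sigma) \geq 2^{\ell-1}$. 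I will discard the constraint $\sigma_x = -1$ (which only shrinks the sum) and split the remaining sum into a near part $S_{\text{near}}$ over $2^{\ell-1} \leq d(y, \I_\sigma) < M \cdot 2^\ell$ and a far part $S_{\text{far}}$ over $d(y, \I_\sigma) \geq M \cdot 2^\ell$.

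For $S_{\text{near}}$, the relevant region lies inside the $2M$ closest neighboring $\ell$-intervals, each of which is plus dense by condition \textbf{(II)} and hence contains at most $2^\ell/M$ minuses. For the $k$-th neighbor with $k \geq 2$, $|y - x|$ varies by at most a factor of $3$ as $y$ ranges over the interval for any $x \in \I_\sigma$, so $J_{xy}$ varies by at most a factor $3^\alpha$; it follows that the minus-to-total interaction ratio on that interval is at most $3^\alpha/M$. Summing over $k \geq 2$ and $x \in A_\sigma$ yields a bound $(3^\alpha/M)\, J(A_\sigma, A_\sigma^c)$. For $k = 1$, the minus spins are confined to the far half of the adjacent interval by condition \textbf{(I)}, and one compares their contribution to the interaction with the entirely plus near half of the same neighbor, which is itself part of $J(A_\sigma, A_\sigma^c)$; a direct estimate again yields an $O(1/M)$ ratio. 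Together, $S_{\text{near}} \leq (C/M)\, J(A_\sigma, A_\sigma^c)$.

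For $S_{\text{far}}$, since $|y - x| \geq d(y, \I_\sigma)$ for $x \in A_\sigma$, the crude bound $\sum_{x \in A_\sigma} J_{xy} \leq |A_\sigma|\, d(y, \I_\sigma)^{-\alpha}$ integrated over $y$ using $\alpha > 1$ gives $S_{\text{far}} \leq C |A_\sigma| (M \cdot 2^\ell)^{1-\alpha}$. Combining with Corollary \ref{Cor: F_A}, which yields $J(A_\sigma, A_\sigma^c) \geq \overline{c}_1 |A_\sigma|^{2-\alpha}$, and using $|A_\sigma| \leq 2^\ell$ with $\alpha > 1$,
\begin{equation*}
\frac{S_{\text{far}}}{J(A_\sigma, A_\sigma^c)} \leq \frac{C}{\overline{c}_1}\left(\frac{|A_\sigma|}{M \cdot 2^\ell}\right)^{\alpha - 1} \leq \frac{C}{\overline{c}_1}\, M_0^{1-\alpha}.
\end{equation*}
Both the near and far ratios can be made $\leq 0.05$ by choosing $M_0$ sufficiently large depending only on $\alpha$, giving the claimed $0.1$ bound. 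The main technical point is the $k = 1$ case in $S_{\text{near}}$, where minus spins may lie as close as $2^{\ell-1}$ to $\I_\sigma$ and the simple ratio-over-an-interval argument does not apply directly; the resolution is to exploit the entirely plus near half from condition \textbf{(I)} as the comparison region inside $J(A_\sigma, A_\sigma^c)$.
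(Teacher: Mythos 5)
Your proof is correct and follows essentially the same approach as the paper: you establish that $\I_\sigma$ is plus favored with respect to $\sigma^S$, bound the contribution from the $2M_\ell$ nearest neighboring $\ell$-intervals using the plus-dense condition (treating the immediately adjacent interval separately via the all-plus near half guaranteed by condition \textbf{(I)}), and control the far tail by the crude decay bound combined with Corollary~\ref{Cor: F_A}. The only technical variation is the near-part comparison, where you exploit the bounded distance ratio within each single neighboring interval, whereas the paper shifts by one and compares each interval's minus interaction with $A_\sigma$ to the previous interval's plus interaction; both yield an $O(1/M_\ell)$ factor, so the distinction is immaterial once $M_0$ is chosen large.
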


\begin{proof}
    To ease the notation we drop the subscript in $A_{\sigma}$ and $\I_\sigma$ throughout the proof. Let $n\geq 0$ and $x\in\Z$ be such that $\I=\I_n(x)$. For every $i\in\Z$, take $\mathrm{E}_i\coloneqq \I_n(x+16i)$.  Recalling that $\I_n(x)=[2^{n-4}x-2^{n-1},2^{n-4}x-2^{n-1})$, we see that $\I_n(x)$ and $\I_n(x\pm16)$ are disjoint and thus $\{\mathrm{E}_i\}_{i\in\Z}$ is a partition of $\Z$. For $1\le i\leq M_n$, as $\I$ is plus favored with respect to $\sigma^S$, we must have $|\mathrm{E}_i^-(\sigma^S)|\leq \frac{2^n}{M_n}$.  Note that $\mathrm{E}_i$ is to the right-hand side of $\I$. Letting $\mathrm{E}_i=[x_i, x_{i+1})$, one can get that $\min_{x\in \mathrm{E}_i^-(\sigma^S)}d(x,A)\ge d(x_i,A)\ge\max_{x\in \mathrm{E}_{i-1}^+(\sigma^S)}d(x,A)$, and combining with the fact that the interaction is decreasing with the distance one can derive that $J(\mathrm{E}_i^-(\sigma^S), A)\leq \frac{2^n}{M_n}J(\{x_i\}, A)$ and $J(\mathrm{E}_{i-1}^+(\sigma^S), A)\geq \left(2^n-\frac{2^n}{M_n}\right)J(\{x_i\}, A)$. Thus we get for all $1<i\leq M_n$, 
    \begin{equation*}
    J(\mathrm{E}_i^-(\sigma^S), A) \leq \frac{1}{M_{n}-1}J(\mathrm{E}_{i-1}^+(\sigma^S), A).
    \end{equation*}
  To get a similar bound for $\mathrm{E}_1(\sigma^S)$, we notice that the first half of $\mathrm{E}_1$ must be all plus in $\sigma^S$, that is, $\mathrm{E}_1^+(\sigma^S)\supset [x_1, \frac{x_1+x_2}{2})$. This is, again, a consequence of $\I$ being plus favored. Using the decreasing property of the interaction again, we get that $J(E_1^-(\sigma^S), A)\leq \frac{2^n}{M_n}J(\frac{x_1+x_2}{2}, A)$ and $J(\mathrm{E}_{1}^+(\sigma^S), A)\geq 2^{n-1}J(\frac{x_1+x_2}{2}, A)$, and therefore $J(\mathrm{E}_1^-(\sigma^S), A) \leq \frac{2}{M_n}J(\mathrm{E}_{1}^+(\sigma^S), A)$. We can thus bound
  \begin{align}\label{Eq: interation_E_i_close}
      \sum_{i= 1}^{M_n}J(\mathrm{E}_i^-(\sigma^S), A) &\leq \frac{2}{M_n}J(\mathrm{E}_1^+(\sigma^S), A) + \sum_{i=1}^{M_n - 1}\frac{1}{M_n - 1} J(\mathrm{E}_i^+(\sigma^S), A)\nonumber \\ 
          &\leq \frac{3}{M_n - 1}J( \bigcup_{i=1}^{M_n}\mathrm{E}_i^+(\sigma^S),A).
  \end{align}
  For the distant intervals $\mathrm{E}_i$ with $i>M_n$, as $d(A, \mathrm{E}_i)>M_02^{n(1+\delta)}$, we can simply bound \begin{align}
      J(A, \bigcup\limits_{i> M_n}\mathrm{E}_i)\leq |A|\sum\limits_{R\geq M_02^{n(1+\delta)}}R^{-\alpha}&\leq CM_0^{1-\alpha}2^{n(1+\delta)(1-\alpha)}|A|\nonumber\\ &\leq CM_{ n}^{1-\alpha}|A|^{2-\alpha},\label{eq: interacion E_i far}
  \end{align} where the last inequality comes from $|A|\le 2^n$. By Corollary~\ref{Cor: F_A}, we have $\overline{c}_1|A|^{2-\alpha}\leq J(A,A^c)$. Combined with \eqref{eq: interacion E_i far}, this yields
  \begin{equation}\label{Eq: interation_E_i_far all}
      J(A, \bigcup\limits_{i> M_n}\mathrm{E}_i)\leq \frac{C}{\overline{c}_1}M_{ n}^{1-\alpha}J(A,A^c). 
  \end{equation}
  By symmetry, we have the same estimations \eqref{Eq: interation_E_i_close} and \eqref{Eq: interation_E_i_far all} for $i<0$, and therefore 
  \begin{equation*}
       \sum_{\substack{x\in A \\  y\in \I^c}}\mathbbm{1}_{\{\sigma_x=\sigma_y^S=-1\}}J_{xy}  \leq  \frac{3}{M_n - 1}J(A,\bigcup_{1\le|i|\le M_n}\mathrm{E}_i^+(\sigma^S)) +  \frac{2C}{\overline{c}_1}M_{n}^{1-\alpha}J(A,A^c).
  \end{equation*}
  The desired bound then comes from choosing $M_0$ large enough.  
\end{proof}

Next, we control the interaction between $A_\sigma^c$ and the sites in $A_\sigma$ that changed signs in the balancing procedure. Let us first introduce some definitions. 
\begin{definition}\label{Def: flipped_sites}
    Let $F(-,\sigma)\coloneqq\{x\in A_\sigma: \sigma_x=1\}$ be the collection of spins flipped to minus in the balancing procedure. For each $x\in F(-,\sigma)$, let $\I^x$ be the last interval containing $x$ which flipped $x$ in the balancing procedure of $\sigma$. Let $\mathscr{F}_\ell(-,\sigma) = \{\I^x: x\in F(-, \sigma), |\I^x| = 2^\ell\}$. Given $\I\in\mathscr{F}_\ell(-,\sigma)$, recall that $s_\I$ denotes the step in which $\I$ was selected. Therefore, we have $F(-,\sigma) \subset \cup_{\ell\geq 0}F_\ell(-,\sigma)$, where $F_\ell(-,\sigma)\coloneqq \cup_{\I\in \mathscr{F}_\ell(-,\sigma)}\I^+(\sigma^{s_\I})$. Similarly, we define $F(+,\sigma) \coloneqq \{x\in A_\sigma^c: \sigma_x=-1, \sigma^S_x=1\}$ to be the collection of spins flipped to plus in the balancing procedure and define $\mathscr{F}_\ell(+,\sigma) = \{\I^x: x\in F(+, \sigma), |\I^x| = 2^\ell\}$.
\end{definition}
Next, we prove a few lemmas and the last two will be used in the proof of Proposition~\ref{Prop: Energy_bound_2}. 

\begin{lemma}\label{lem: fake interval expansion}
Recall Definition \ref{def: Peierls map} and recall that $M^\prime_\ell = 2\left\lfloor\frac{M_\ell}{2c_1}\right\rfloor$.
    Given $\sigma\in\Omega^+$, let $n\geq 0$ be such that $|\I_\sigma| = 2^n$. Then, for all $\ell < n$ and $\I_\ell\in\mathscr{F}_\ell(-,\sigma)$, we have $\rho_{\frac{M^\prime_\ell}{2}}(\I_\ell)\subset \I_\sigma$.
\end{lemma}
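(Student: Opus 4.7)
The strategy is to exploit two complementary rigid features of the final configuration $\sigma^{S}$: a \emph{weakly minus favored} pattern around $\I_\ell$ (inherited from the flip that produced it) and a \emph{plus favored} pattern around $\I_\sigma$ (forced by how the balancing procedure terminates). If $\rho_{M'_\ell/2}(\I_\ell)$ spills out of $\I_\sigma$, then a neighboring $\ell$-interval of $\I_\ell$ ends up trapped inside the plus band of $\I_\sigma$ while still being required to be weakly minus dense, an impossibility once $M_0$ is chosen large.

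First I would pick $x\in F(-,\sigma)$ with $\I^x=\I_\ell$; since $\sigma^{0}_x=+1$ and $\sigma^{S}_x=-1$, Proposition~\ref{Prop: balanced_intervals} (with $T=S$) gives that $\I_\ell$ is weakly minus favored with respect to $\sigma^{S}$. Writing $\I_\ell=\I_\ell(x_0)$, this says in particular that for every $1\le k\le M'_\ell$ each neighboring $\ell$-interval $\I_\ell(x_0\pm 16k)$ contains at most $c_1 2^\ell/M_\ell$ plus spins in $\sigma^{S}$. On the other hand, the balancing procedure stops precisely when every isolated interval lies in $\C^{+,0}$, so $\I_\sigma$ itself must be plus favored with respect to $\sigma^{S}$; by condition~(I) of Definition~\ref{def: favored interval} one then has $\sigma^{S}_y=+1$ whenever $1\le d(y,\I_\sigma)<2^{n-1}$. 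Finally, the intersection $\I_\ell\cap\I_\sigma$ is nonempty since $x\in\I_\ell\cap A_\sigma\subset\I_\ell\cap\I_\sigma$.

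Now I would assume for contradiction that $\rho_{M'_\ell/2}(\I_\ell)\not\subset\I_\sigma$ and, by symmetry, that the expansion spills past the right endpoint $b$ of $\I_\sigma$. Letting $\Delta$ denote the signed distance from the right endpoint of $\I_\ell$ to $b$, the spill-out condition reads $\Delta<(M'_\ell/2-1)\cdot 2^\ell$. Taking $k^{*}$ to be the smallest integer for which $\I_\ell(x_0+16k^{*})$ lies entirely to the right of $b$, a direct calculation gives $k^{*}\le M'_\ell/2$, and in the main regime $\ell\le n-2$ one also obtains $\I_\ell(x_0+16k^{*})\subset[b,b+2^{n-1})$. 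Thus $\I_\ell(x_0+16k^{*})$ is both all-$+1$ in $\sigma^{S}$ (trapped in the plus band of $\I_\sigma$) and weakly minus dense (from $\I_\ell$'s weakly minus favored condition~(II)), forcing $M_\ell\le c_1$ and contradicting the choice of $M_0$ large. The main obstacle will be the narrow regime $\ell=n-1$, where the plus-band width $2^{n-1}$ exactly matches the neighboring interval length $2^\ell$; there I would argue instead by comparing condition~(I) of $\I_\ell$ (which forces $\sigma^{S}=-1$ on a strip of width $2^{\ell-1}$ just outside $\I_\ell$) with condition~(I) of $\I_\sigma$ (which forces $\sigma^{S}=+1$ on its own plus band), observing that any nontrivial straddling of $\I_\ell$ across $\partial\I_\sigma$ would make these two bands overlap with incompatible signs.
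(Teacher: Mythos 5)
Your argument for $\ell\leq n-2$ runs along the same lines as the paper's and is correct: a neighboring $\ell$-interval of $\I_\ell$ is located entirely inside the plus band $\rho_{3/2}(\I_\sigma)\setminus\I_\sigma$ (condition \textbf{(I)} of plus favored for $\I_\sigma$), while condition \textbf{(II)} of the weakly minus favored property of $\I_\ell$, supplied by Proposition~\ref{Prop: balanced_intervals}, forces that same interval to be weakly minus dense --- impossible once $M_0>c_1$.

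The gap is in the $\ell=n-1$ case, where you invoke only condition \textbf{(I)}: the minus collar of width $2^{\ell-1}=2^{n-2}$ adjacent to $\I_\ell$ versus the plus band of $\I_\sigma$, giving a sign clash when $\I_\ell$ straddles $\partial\I_\sigma$. But this straddling configuration never actually arises. Since $\rho_{3/2}(\I_\ell)$ is all minus in $\sigma^S$ (it is all minus at step $s_{\I_\ell}+1$ by Lemma~\ref{Lemma: aux_1} and must stay so, else $x$ would end up plus in $\sigma^S$), while the flank band $\rho_{3/2}(\I_\sigma)\setminus\I_\sigma$ is all plus, the length-$2^n$ interval $\rho_{3/2}(\I_\ell)$ must avoid that flank band and therefore coincides with $\I_\sigma$; thus $\I_\ell$ sits precisely in the middle half of $\I_\sigma$. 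In that position the condition-\textbf{(I)} collars of $\I_\ell$ lie wholly inside $\I_\sigma$ and never meet the plus band, so your sign argument is vacuous. Note also that for $\ell=n-1$ the hypothesis $\rho_{M'_\ell/2}(\I_\ell)\not\subset\I_\sigma$ is automatic once $M'_\ell\geq 4$, so what has to be shown is really $\mathscr{F}_{n-1}(-,\sigma)=\emptyset$, and your sign argument does not show it.

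The paper closes this case by a density count rather than a single-strip sign clash: with $\I_\ell$ centered in $\I_\sigma$, the plus band $\rho_{3/2}(\I_\sigma)\setminus\I_\sigma$ (of total size $2^n$, all plus) is covered by the four nearest $\ell$-neighbors $\B_{\pm 1},\B_{\pm 2}$ of $\I_\ell$, each weakly minus dense by condition \textbf{(II)}; the number of pluses there is therefore at most $4\cdot 2^\ell/M'_\ell$, far less than $2^n$ for $M_0$ large. You need an estimate of this type, appealing to condition \textbf{(II)}, to finish the scale $\ell=n-1$.
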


\begin{proof}
    Given $\ell < n$ and $\I_\ell\in\mathscr{F}_\ell(-,\sigma)$, let $x\in\Z$ be such that  $\I_{\ell}=\I_\ell(x)$. Let $\B_k\coloneqq \I_\ell(x+16k)$ for all $1\leq |k|\leq M^\prime_\ell$. Supposing there exists $1\leq |k|\leq \frac{M^\prime_\ell}{2}$ such that $\B_k\cap \I_\sigma^c \neq \emptyset$, we next derive a contradiction. Without loss of generality, we can assume that $k$ has the smallest absolute value. Our derivation of the contradiction is divided into the following two cases.
    
    If $\ell<n-1$, since $\B_k\cap \I_\sigma^c \neq \emptyset$ we must have $B_{k^\prime}\subset \rho_{\frac{3}{2}}(\I_\sigma)\setminus \I_\sigma$, where $k^\prime = \sign(k)\cdot (|k|+1)$ and thus $\B_{k^\prime}$ is the neighboring interval of $\B_k$ which is further away from $\I_\sigma$. As $\I_\sigma$ is plus favored with respect to $\sigma^S$, all spins of $\sigma^S$ in $\rho_{\frac{3}{2}}(\I_\sigma)\setminus \I_\sigma$ are plus, and so are all spins in $\B_{k^\prime}$. However, by Proposition~\ref{Prop: balanced_intervals}, $\I_\ell$ is weakly minus favored, and thus $\B_{k^\prime}$ must be weakly minus dense, arriving at a contradiction.

    If $\ell=n-1$, then we have $\rho_{\frac{3}{2}}(\I_\sigma)\setminus \I_\sigma \subset  \B_{-2}\cup \B_{-1}\cup \B_1\cup \B_2$.  Recall that all spins of $\sigma^S$ in $\rho_{\frac{3}{2}}(\I_\sigma)\setminus \I_\sigma$ are plus. Recalling also that $\I_\ell$ is weakly minus favored with respect to $\sigma^S$ (see Proposition \ref{Prop: balanced_intervals}), we see that $\B_{i}$ must be weakly minus dense for any $i=-2,-1,1,2$. Hence the number of pluses of $\sigma^S$ in $\B_2\cup \B_1\cup \B_{-1}\cup \B_{-2}$ is at most $4\cdot \frac{2^{\ell}}{M^\prime_\ell}$. We have again arrived at a contradiction since $4\cdot \frac{2^{\ell}}{M^\prime_\ell}<2^n$ (and $\rho_{3/2}(\I_\sigma)\setminus \I_\sigma$ is all plus).
\end{proof}

Provided with Lemma \ref{lem: fake interval expansion}, we can prove the following lemma.

\begin{lemma}\label{Lemma: Interaction_far_from_I}
    Given $\sigma\in\Omega^+$ and $\I\in\mathscr{F}_\ell(-,\sigma)$, let $\widetilde{\I}\coloneqq \rho_{\frac{M^\prime_\ell}{2}}(\I)$. 
    For any $B\subset \widetilde{\I}^c$ we have
    \begin{equation*}
        J(\I^+(\sigma^{s_\I}), A_\sigma^c\cap B) \leq \frac{4}{M^\prime_\ell}J(A_\sigma \cap \widetilde{\I}, 
        A_\sigma^c\cap B)\frac{|\I^+(\sigma^{s_\I})|}{2^\ell}.
    \end{equation*}
    Analogously, for $\I\in\mathscr{F}_\ell(+,\sigma)$ and $B\subset \widetilde{\I}^c$, we have
    \begin{equation*}
        J(\I^-(\sigma^{s_\I}), A_\sigma\cap B) \leq \frac{4}{M^\prime_\ell}J(A_\sigma^c \cap \widetilde{\I}, A_\sigma\cap B)\frac{|\I^-(\sigma^{s_\I})|}{2^\ell}.
    \end{equation*}
\end{lemma}

\begin{proof} Due to symmetry, we only provide proof for the first inequality. To ease the notation, we drop the subscript in $A_{\sigma}$ in the proof. For every $y\in A^c\cap B$, we wish to lower-bound $J(A\cap\widetilde{\I}, \{y\})$. Take $a,b$ such that $\I=[a,b)$. We assume $y\geq b$, and the argument for $y<a$ follows by symmetry. 
    Let $x\in\Z$ be such that $\I=\I_\ell(x)$. By Proposition~\ref{Prop: balanced_intervals}, $\I$ is weakly minus favored with respect to $\sigma^S$. So, for all $1\leq k\leq M^\prime_\ell$, we have $\I_\ell(x+ 16k)$ is weakly minus dense. Moreover, by Lemma~\ref{lem: fake interval expansion} and Definition \ref{def: Peierls map}, we get that $\{w\in \I_\ell(x+16k) : \sigma^S_w=-1\}\subset A$ for every $1\leq |k|\leq \frac{M^\prime_\ell}{2}$. Therefore we have
   \begin{align*}
      |\{z\in A\cap \widetilde{\I}: z\geq b\}|&\geq \sum_{1\leq k\leq\frac{M^\prime_\ell}{2}}|\{w\in \I_\ell(x+16k) : \sigma^{ S}_w=-1\}| \\ 
      &\geq \left(1 - \frac{1}{M^\prime_\ell}\right)\frac{M^\prime_\ell}{2}\cdot 2^\ell \geq \frac{M^\prime_\ell2^\ell}{4}.
   \end{align*}
   
   In addition, since the interaction decreases with respect to the distance, we have $J(A\cap \widetilde{\I}, \{y\})\geq (y-b+1)^{-\alpha}\frac{M^\prime_\ell2^\ell}{4}$. Similarly we can bound $J(\I^+(\sigma^{s_\I}),\{y\})\leq (y-b+1)^{-\alpha}|\I^+(\sigma^{s_\I})|$ and therefore
   \begin{equation*}
       J(A\cap \widetilde{\I}, \{y\})\geq \frac{J(\I^+(\sigma^{s_\I}),\{y\})}{|\I^+(\sigma^{s_\I})|} \frac{M^\prime_\ell2^\ell}{4}.
   \end{equation*}
   Summing over $y\in A^c\cap B$, we get the desired bound. 
\end{proof}

For the interaction within $\rho_{\frac{M_\ell^\prime}{2}}(\I)$, we need a tighter control. Given $\I=\I_\ell(x)\in \mathscr{F}_\ell(-,\sigma)$, for each $1\leq |k|\leq \frac{M^\prime_\ell}{2}$, let us denote $\I^k\coloneqq \I_\ell(x+16k)$. 

\begin{lemma}\label{Lemma: interacton_close_to_I}
    There exists a constant $\overline{c}_3 \coloneqq \overline{c}_3(\alpha)>0$ such that, for any $\sigma$, $\I\in \mathscr{F}_\ell(-,\sigma)$ and $1\leq |k|\leq \frac{M^\prime_\ell}{2}$, 
    \begin{equation*}
        J(\I^+(\sigma^{s_\I}), \I^k \cap A_\sigma^c)\leq \overline{c}_3 (M^\prime_\ell)^{1-\alpha}J(\I^k\cap A_\sigma , \I^k\cap A^c_\sigma )\frac{|\I^+(\sigma^{s_\I})|}{2^\ell}.
    \end{equation*}
    Analogously, for $\I\in\mathscr{F}_\ell(+,\sigma)$, we have
    \begin{equation*}
        J(\I^-(\sigma^{s_\I}), \I^k \cap A_\sigma)\leq \overline{c}_3 (M^\prime_\ell)^{1-\alpha}J(\I^k\cap A_\sigma , \I^k\cap A^c_\sigma )\frac{|\I^-(\sigma^{s_\I})|}{2^\ell}.
    \end{equation*}
\end{lemma}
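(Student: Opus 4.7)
The plan is to reduce to a pointwise inequality in $y\in\I^k\cap A_\sigma^c$ and then sum over $y$. First, by Lemma~\ref{lem: fake interval expansion}, $\rho_{M'_\ell/2}(\I)\subset\I_\sigma$, so $\I^k\subset\I_\sigma$ for every $1\le|k|\le M'_\ell/2$; consequently $\I^k\cap A_\sigma=(\I^k)^-(\sigma^S)$ and $\I^k\cap A_\sigma^c=(\I^k)^+(\sigma^S)$. Proposition~\ref{Prop: balanced_intervals} says that $\I$ is weakly minus favored with respect to $\sigma^S$, which supplies two facts I would use: (a) each $\I^k$ is weakly minus dense, so $|(\I^k)^+(\sigma^S)|<c_1\cdot 2^\ell/M_\ell$; and (b) condition \textbf{(I)} (which does not involve $M_\ell$) forces every $y\in(\I^k)^+(\sigma^S)$ to satisfy $d(y,\I)\ge 2^{\ell-1}$, ruling out the pluses closest to $\I$.

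Next, I would establish the pointwise bound. Fix $y\in(\I^k)^+(\sigma^S)$ and set $p\coloneqq|\I^+(\sigma^{s_\I})|$. Since $d(y,\I)\ge\max\{2^{\ell-1},(|k|-1)2^\ell\}\ge 2^{\ell-1}$, every $z\in\I$ satisfies $|z-y|^{-\alpha}\le(2^{\ell-1})^{-\alpha}$, so
\[\sum_{z\in\I^+(\sigma^{s_\I})}J_{zy}\le p\cdot(d(y,\I))^{-\alpha}\le 2^{\alpha}p\cdot 2^{-\alpha\ell}.\]
For the lower bound on $J((\I^k)^-(\sigma^S),y)$, I would consider the $N\coloneqq 2c_1\cdot 2^\ell/M_\ell$ sites of $\I^k$ closest to $y$ (well-defined since $N\le 2^\ell$ for $M_0$ large). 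Because at most $c_1\cdot 2^\ell/M_\ell=N/2$ of them can be plus, at least $N/2$ are minus, and each lies within distance $N$ of $y$, so
\[J((\I^k)^-(\sigma^S),y)\ge\frac{N}{2}\cdot N^{-\alpha}=\frac{1}{2}(2c_1)^{1-\alpha}\,M_\ell^{\alpha-1}\,2^{\ell(1-\alpha)}.\]

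To conclude, since $\alpha>1$ and $M'_\ell\le M_\ell/c_1$ we have $(M'_\ell)^{1-\alpha}\ge c_1^{\alpha-1}M_\ell^{1-\alpha}$; the $M_\ell$-powers cancel, giving
\[(M'_\ell)^{1-\alpha}J((\I^k)^-(\sigma^S),y)\ge 2^{-\alpha}\cdot 2^{\ell(1-\alpha)}.\]
Multiplying by $p/2^\ell$ and choosing $\overline{c}_3\ge 2^{2\alpha}$ validates the pointwise inequality $\sum_{z\in\I^+(\sigma^{s_\I})}J_{zy}\le\overline{c}_3(M'_\ell)^{1-\alpha}J((\I^k)^-(\sigma^S),y)\cdot p/2^\ell$; summing over $y\in\I^k\cap A_\sigma^c$ then yields the lemma. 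The main obstacle is the lower bound on $J((\I^k)^-(\sigma^S),y)$: the pluses in $\I^k$ could cluster near $y$, so the global density estimate $|(\I^k)^-(\sigma^S)|\ge 2^\ell(1-c_1/M_\ell)$ alone would only give $J\gtrsim 2^{\ell(1-\alpha)}$, which is too weak by a factor $M_\ell^{\alpha-1}$. The $N$-closest argument circumvents this by exploiting that the total mass of pluses is bounded regardless of location, forcing minuses to appear within distance $\asymp 2^\ell/M_\ell$ of $y$; this is precisely the scale producing the missing $M_\ell^{\alpha-1}$ that exactly cancels the $(M'_\ell)^{1-\alpha}$ decay on the right-hand side.
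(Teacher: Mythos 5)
Your proof is correct, and it targets the same two facts as the paper's argument---the distance bound $d(\I,\I^k\cap A_\sigma^c)\geq 2^{\ell-1}$ coming from condition \textbf{(I)} of the weakly minus favored property (via Proposition~\ref{Prop: balanced_intervals}), and the density bound $|\I^k\cap A_\sigma^c|\lesssim 2^\ell/M_\ell$---but the way you turn the density bound into a lower bound for the interaction inside $\I^k$ is different. The paper applies Lemma~\ref{Lemma: interacton_1} directly to $\I^k$ to get the global estimate $J(\I^k\cap A_\sigma^c,\I^k\cap A_\sigma)\geq\overline{c}_1|\I^k\cap A_\sigma^c|^{2-\alpha}$, divides through by this, and finishes by noting $|\I^k\cap A_\sigma^c|/2^\ell\leq 1/M'_\ell$. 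You instead prove a pointwise estimate: for each plus $y$ you look at the $N\asymp 2^\ell/M_\ell$ sites closest to $y$ and observe that at least half of them are minus, giving $J((\I^k)^-(\sigma^S),y)\gtrsim N^{1-\alpha}$, and then sum over $y$. This is essentially an inline re-derivation of the content of Lemma~\ref{Lemma: interacton_1} (whose own proof is likewise a per-point domination argument), but your per-point bound $\asymp(2^\ell/M_\ell)^{1-\alpha}$ is independent of $|\I^k\cap A_\sigma^c|$ rather than of order $|\I^k\cap A_\sigma^c|^{1-\alpha}$, which is slightly weaker but still produces exactly the $(M'_\ell)^{1-\alpha}$ gain after the cancellation you identify. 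Both routes yield the stated bound (with different explicit $\overline{c}_3$). One small point: the inclusion $\I^k\subset\I_\sigma$ for $|k|=M'_\ell/2$ does not quite follow from the \emph{statement} of Lemma~\ref{lem: fake interval expansion} (since $\I^{M'_\ell/2}\not\subset\rho_{M'_\ell/2}(\I)$), but it does follow from its proof; the paper's argument implicitly relies on the same fact, so this is not a gap specific to your proposal.
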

\begin{proof} Due to symmetry, we only provide proof for the first inequality. For notation clarity, in this proof, we drop the subscript in $A_{\sigma}$. As $\I$ is weakly minus favored with respect to $\sigma^S$ (by Proposition~\ref{Prop: balanced_intervals}), all its first $2^{\ell-1}$ neighbors are minus, hence $d(\I, \I^k\cap A^c_\sigma )\geq 2^{\ell-1}$. Thus, on the one hand, we have
    \begin{equation}\label{Eq: interacton_close_to_I_1}
        J(\I^+(\sigma^{s_\I}),  \I^k\cap A^c) \leq (2^{\ell-1})^{-\alpha}|\I^+(\sigma^{s_\I})||\I^k\cap A^c|. 
    \end{equation}
    On the other hand, as $\I^k$ is weakly plus vacant with respect to $\sigma^{S}$ (recalling that $\I$ is weakly minus favored), we have 
    \begin{equation}\label{Eq: I_k_c}
        |\I^{ k}\cap A^c| = |\{x\in \I^k: \sigma^S_x=1\}|\leq\frac{1}{M^\prime_\ell}2^\ell.
    \end{equation} As this is smaller than $|\I^k|/2$, we can apply Lemma~\ref{Lemma: interacton_1} to get $J(\I^{ k}\cap A^c, \I^{k}\cap A) \geq \overline{c}_1|\I^{k}\cap A^c|^{2-\alpha}$. Combined with \eqref{Eq: interacton_close_to_I_1}, it yields that
    \begin{align*}
         J(\I^+(\sigma^{s_\I}),  \I^k\cap A^c) & \stackrel{\eqref{Eq: interacton_close_to_I_1}}{\leq} (2^{\ell-1})^{-\alpha}|\I^+(\sigma^{s_\I})||\I^k\cap A^c|\frac{J(\I^{k}\cap A^c, \I^{k}\cap A)}{\overline{c}_1|\I^{k}\cap A^c|^{2-\alpha}} \\
         &= \frac{2^\alpha}{\overline{c}_1}J(\I^{k}\cap A^c, \I^{k}\cap A)\frac{|\I^+(\sigma^{s_\I})|}{2^\ell}\left(\frac{|\I^{k}\cap A^c|}{2^{\ell}}\right)^{\alpha-1}\\
         &\stackrel{\eqref{Eq: I_k_c}}{\leq}  \frac{2^\alpha}{\overline{c}_1}J(\I^{k}\cap A^c, \I^{k}\cap A)\frac{|\I^+(\sigma^{s_\I})|}{2^\ell}(M^\prime_\ell)^{1-\alpha}.\qedhere
    \end{align*}
\end{proof}

We are ready to prove Proposition~\ref{Prop: Energy_bound_2}.

\begin{proof}[\textbf{Proof of Proposition~\ref{Prop: Energy_bound_2}}]
Using the identity $\sigma_x\sigma_y = 2\mathbbm{1}_{\{\sigma_x= \sigma_y\}} -1$ for all $x,y\in \Z$, we can write
\begin{equation}\label{eq: energy_difference 0}
     H^+_\Lambda(\sigma) - H^+_\Lambda(\tau_{A_\sigma}(\sigma)) = - \sum_{x\in A_\sigma,y\in A_\sigma^c}2J_{xy}\sigma_x\sigma_y =  2J(A_\sigma,A_\sigma^c)-4\sum_{x\in A_\sigma,y\in A_\sigma^c}J_{xy}\mathbbm{1}_{\{\sigma_x=\sigma_y\}}.
\end{equation}
We then split $\mathbbm{1}_{\{\sigma_x= \sigma_y\}}$ according to the signs of the original configuration and the final configuration $\sigma^S$, getting that
\begin{align}\label{eq: indicator of interaction}
    \mathbbm{1}_{\{\sigma_x= \sigma_y\}} &=  \mathbbm{1}_{\{\sigma_x = \sigma_y= \sigma_y^S = - 1\}}+\mathbbm{1}_{\{\sigma_x = \sigma_y= -\sigma_y^S = -1\}}  +\mathbbm{1}_{\{\sigma_x = \sigma_y= 1\}} \nonumber \\
    & \leq \mathbbm{1}_{\{\sigma_x = \sigma_y= \sigma_y^S = - 1\}} +  \mathbbm{1}_{\{\sigma_x = \sigma_y= -\sigma_y^S = -1\}} + \mathbbm{1}_{\{\sigma_x = 1\}} .
\end{align}
Recall that $\I_\sigma$ is the $n$-interval such that $A_\sigma = \I_\sigma^-(\sigma^S)$. Recalling also Definition~\ref{Def: flipped_sites}, we have $\{y\in A_\sigma^c:-\sigma_y=\sigma^S_y=1\}=F(+,\sigma)$ and $\{x\in A_\sigma:-\sigma_x =\sigma^S_x =1\}=F(-,\sigma)$. With this, we can plug \eqref{eq: indicator of interaction} into \eqref{eq: energy_difference 0} and get that

\begin{multline}\label{eq: energy_difference}
    H^+_\Lambda(\sigma) - H^+_\Lambda(\tau_{A_\sigma}(\sigma)) \ge 2J(A_\sigma,A_\sigma^c) - 4 \sum_{\substack{x\in A_\sigma \\ y\in \I_\sigma^c}} \mathbbm{1}_{\{\sigma_x=\sigma_y=\sigma^S_y=-1\}}J_{xy} \\- 4J(A_\sigma\setminus F(-,\sigma), F(+,\sigma)) - 4J(F(-,\sigma), A_\sigma^c) . 
\end{multline}
The sum on the right-hand side of \eqref{eq: energy_difference} is only over $y\in\I_\sigma^c$ since all spins in $\I_\sigma \setminus A_\sigma$ are plus with respect to $\sigma^S$. We claim that it suffices to prove the following:
    \begin{align}
        &J(F(-,\sigma), A_\sigma^c) \leq \frac{C}{M_0}J(A_\sigma, A^c_\sigma),\label{Eq: Interation_F}\\
        &J(F(+,\sigma), A_\sigma) \leq \frac{C}{M_0}J(A_\sigma, A^c_\sigma)\label{Eq: Interation_F+},
    \end{align}
    for an appropriate constant $C>0$ depending only on $\alpha$ and $\delta$. Indeed, combining \eqref{Eq: Interation_F} and \eqref{Eq: Interation_F+} with Lemma \ref{Lemma: First_interaction} (which bounds the second term on the right-hand side of \eqref{eq: energy_difference}), we get that $$H^+_\Lambda(\sigma) - H^+_\Lambda(\tau_{A_\sigma}(\sigma)) \geq (2 - 4/10 - \frac{2C}{M_0})J(A_\sigma, A^c_\sigma),$$ and the desired result follows by taking $M_0$ large enough.
    
    We now prove \eqref{Eq: Interation_F}. Recall Definition \ref{Def: flipped_sites}. Moreover, we can restrict each $\mathscr{F}_\ell(-,\sigma)$ to disjoint intervals by taking $\mathscr{F}_{\ell,i}(-,\sigma) \coloneqq \mathscr{F}_\ell(-,\sigma)\cap \mathcal{I}_\ell^i$, for $i=0,\dots, 15$, where $\mathcal{I}_\ell^i$ was defined in \eqref{eq: sub-collection of disjoint coverings}. We can also write $F_\ell(-,\sigma) \subset \cup_{i=0}^{15} F_{\ell, i}(-,\sigma)$ where $F_{\ell,i} (-,\sigma)\coloneqq \cup_{\I\in \mathscr{F}_{\ell,i}(-,\sigma)}\I^+(\sigma^{s_\I})$, for $i=0,\dots, 15$.
    Then, 
    \begin{equation}\label{Eq: Split_F}
        J(F(-,\sigma), A_\sigma^c) \leq \sum_{\ell= 0}^n\sum_{i=0}^{15} J(F_{\ell, i}(-,\sigma), A_\sigma^c) \leq \sum_{\ell= 0}^n\sum_{i=0}^{15} \sum_{\B\in\mathcal{I}_\ell^i}J(F_{\ell, i}(-,\sigma), A_\sigma^c\cap \B). 
    \end{equation}
    
    Fixing $\B\in\mathcal{I}_\ell^i$, we now upper-bound $J(F_{\ell, i}(-,\sigma), A_\sigma^c\cap \B)$. Recall that for any $\I\in\mathscr{F}_\ell(-,\sigma)$, we denote $\widetilde{\I}= \rho_{\frac{M^\prime_\ell}{2}}(\I)$. As $\mathcal{I}_\ell^i$ forms a disjoint partition of $\Z$, for all $\I_\ell\in \mathscr{F}_{\ell, i}(-,\sigma)$ either $\B\subset \widetilde{\I}_\ell$ or $\B \cap \widetilde{\I}_\ell = \emptyset$. 
    
    We first consider $\I_\ell\in \mathscr{F}_{\ell, i}(-,\sigma)$ with $\B \cap \tilde \I_\ell = \emptyset$. As in the proof of Lemma~\ref{Lemma: not_much_flips}, we say $\I_\ell\in \mathscr{F}_{\ell, i}(-,\sigma)$ is \textit{negligible} when $|\I_\ell^{+}(\sigma^{s_{\I_\ell}})|\leq \frac{2^\ell}{M^\prime_{\ell}}$, i.e., not many spins were flipped to minus. Otherwise, we say $\I_\ell$ is \textit{non-negligible}. Similarly to \eqref{Eq: non-negligible_are_distant}, we want to lower-bound the distance between two non-negligible intervals. We cannot simply use \eqref{Eq: non-negligible_are_distant} since it was proved for intervals in  $\mathcal{F}_\ell$, not in $\mathscr{F}_{\ell, i}(-,\sigma)$ as we need. Next we show that two non-negligible intervals $\I_\ell, \I_\ell^\prime$ must satisfy 
    \begin{equation}\label{eq: non-negligible distance}
        d(\I_\ell,\I_\ell^\prime)\geq (M^\prime_\ell-1)2^\ell.
    \end{equation} 
    Indeed, for two non-negligible intervals $\I_\ell,\I_\ell^\prime$, let $s,s^\prime$ be the step in which they were flipped respectively. We can assume without loss of generality that $s<s^\prime$, i.e., $\I_\ell$ was flipped first. We get from Proposition~\ref{Prop: balanced_intervals} that  $\I_\ell$ is weakly minus favored with respect to $\sigma^{s^\prime}$.  If $d(\I_\ell, \I_\ell^\prime)\leq (M^\prime_\ell-1)2^\ell$,  then $|(\I_\ell^\prime)^-(\sigma^{s^\prime})|\ge 2^\ell-\frac{2^\ell}{M^\prime_\ell}$, which contradicts to the fact that $\I^\prime_\ell$ is non-negligible.
    This proves \eqref{eq: non-negligible distance} and implies, in particular, that $\widetilde{\I}_\ell\cap\widetilde{\I}_\ell^\prime=\emptyset$. By Lemma~\ref{Lemma: Interaction_far_from_I}, when $\B\cap\widetilde{\I}_\ell=\emptyset$ we have $J(\I_\ell^+(\sigma^{s_{\I_\ell}}), A_\sigma^c\cap \B)\leq \frac{4}{M^\prime_\ell}J(\widetilde{\I}_\ell\cap A_\sigma,  A_\sigma^c\cap \B)$. Summing over non-negligible intervals we get
    \begin{equation}\label{Eq: Int_A_1}
        \sum_{\substack{\I_\ell\in\mathscr{F}_{\ell, i}(-,\sigma), \ \B\cap\tilde{\I}_\ell = \emptyset \\ \I_\ell \text{ non-negligible}}}J(\I_\ell^+(\sigma^{s_{\I_\ell}}), A_\sigma^c \cap \B)\leq \frac{4}{M^\prime_\ell}J(A_\sigma , A_\sigma^c\cap \B).
    \end{equation}
    For negligible $\I_\ell\in\mathscr{F}_{\ell, i}^-$, by Lemma~\ref{Lemma: Interaction_far_from_I} we have 
    \begin{equation*}
        J(\I_\ell^+(\sigma^{s_{\I_\ell}}),  A_\sigma^c\cap \B)\leq  \frac{4}{M^\prime_\ell}J(\widetilde{\I}_\ell\cap A_\sigma,  A_\sigma^c \cap \B)\frac{|\I_\ell^+(\sigma^{s_{\I_\ell}})|}{2^\ell} \leq \frac{4}{M^\prime_\ell}J(\widetilde{\I}_\ell\cap A_\sigma,  A_\sigma^c \cap \B)\frac{1}{M^\prime_\ell}.
    \end{equation*} In this case, not all $\widetilde{\I}_\ell$'s are disjoint, but every $x\in\widetilde{\I}_\ell$ can be contained in at most $M^\prime_\ell$ other intervals $\widetilde{\I}_\ell^\prime$ with $\I_\ell^\prime\in\mathscr{F}_\ell(-,\sigma)$. Therefore,
    
    \begin{equation}\label{Eq: Int_A_2}
         \sum_{\substack{\I_\ell\in\mathscr{F}_{\ell, i}(-,\sigma), \ \B\cap\tilde{\I}_\ell = \emptyset \\ \I_\ell \text{ negligible}}}J(\I_\ell^+(\sigma^{s_{\I_\ell}}), A_\sigma^c \cap \B)\leq \frac{4}{M^\prime_\ell}J(A_\sigma, A_\sigma^c\cap \B).
    \end{equation}

    We now consider the interval $\I_\ell\in\mathscr{F}_{\ell, i}(-,\sigma)$ with $\B\subset \widetilde{\I}_\ell$. If $\I_\ell$ is negligible, Lemma~\ref{Lemma: interacton_close_to_I} yields
    $J(\I_\ell^+(\sigma^{s_{\I_\ell}}), A_\sigma^c \cap \B)\leq \overline{c}_3(M^\prime_\ell)^{-\alpha}J(\B\cap A_\sigma, \B\cap A^c_\sigma)$. Summing over all $M^\prime_\ell+1$ possible such intervals we get
    \begin{equation}\label{Eq: Int_A_3}
         \sum_{\substack{\I_\ell\in\mathscr{F}_{\ell, i}(-,\sigma), \ \B\subset\tilde{\I}_\ell \\ \I_\ell \text{ negligible}}}J(\I_\ell^+(\sigma^{s_{\I_\ell}}), A_\sigma^c \cap \B)\leq 2\overline{c}_3(M^\prime_\ell)^{1-\alpha}J(\B\cap A_\sigma, \B\cap A^c_\sigma).
    \end{equation}

   For a non-negligible interval $\I_\ell\in\mathscr{F}_{\ell, i}(-,\sigma)$ with $\B\subset\widetilde{\I}_\ell$, Lemma~\ref{Lemma: interacton_close_to_I} again yields $$ J(\I_\ell^+(\sigma^{s_{\I_\ell}}), A_\sigma^c \cap \B) \leq \overline{c}_3(M^\prime_\ell)^{1-\alpha}J(\B\cap A_\sigma, \B\cap A^c_\sigma).$$ Furthermore, there are at most two intervals $\I_\ell\in\mathscr{F}_{\ell, i}(-,\sigma)$ with $\B\subset\widetilde{\I}_\ell$ can be non-negligible since all non-negligible intervals are at least $(M^\prime_\ell - 1)2^\ell$ distant away from each other (see \eqref{eq: non-negligible distance}). Thus we get that  \begin{equation}\label{Eq: Int_A_4}
         \sum_{\substack{\I_\ell\in\mathscr{F}_{\ell, i}(-,\sigma), \ \B\subset\tilde{\I}_\ell \\ \I_\ell \text{ non-negligible}}}J(\I_\ell^+(\sigma^{s_{\I_\ell}}), A_\sigma^c \cap \B)\leq 2\overline{c}_3(M^\prime_\ell)^{1-\alpha}J(\B\cap A_\sigma, \B\cap A^c_\sigma).
    \end{equation} Together with \eqref{Eq: Int_A_1}, \eqref{Eq: Int_A_2} and \eqref{Eq: Int_A_3}, this yields that
    \begin{equation*}
        \sum_{\B\in\mathcal{I}_\ell^i}J(F_{\ell, i}(-,\sigma), A_\sigma^c\cap \B) \leq  (64+4\overline{c}_3)(M^\prime_\ell)^{1-\alpha} J(A_\sigma, A_\sigma^c).
    \end{equation*}
    Plugging the preceding inequality into \eqref{Eq: Split_F} we get $\eqref{Eq: Interation_F}$. 

    To prove \eqref{Eq: Interation_F+} we proceed similarly. Defining $\mathscr{F}_{\ell,i}(+,\sigma) \coloneqq \mathscr{F}_\ell(+,\sigma)\cap \mathcal{I}_\ell^i$, for $i=0,\dots, 15$,  we have $F_\ell(+,\sigma) \subset \cup_{i=0}^{15} F_{\ell, i}(+,\sigma)$ where $F_{\ell,i} (+,\sigma)\coloneqq \cup_{\I\in \mathscr{F}_{\ell,i}(+,\sigma)}\I^-(\sigma^{s_\I})$. As in  \eqref{Eq: Split_F}, we can write
    \begin{equation}\label{Eq: Split_F plus}
         J(F(+,\sigma), A_\sigma) \leq \sum_{\ell= 0}^n\sum_{i=0}^{15} J(F_{\ell, i}(+,\sigma), A_\sigma) \leq \sum_{\ell= 0}^n\sum_{i=0}^{15} \sum_{\B\in\mathcal{I}_\ell^i}J(F_{\ell, i}(+,\sigma), A_\sigma\cap \B). 
    \end{equation}
    As Lemmas \ref{Lemma: interacton_close_to_I} and \ref{Lemma: Interaction_far_from_I} also apply to intervals $\I\in\mathscr{F}_\ell(+, \sigma)$, by a completely symmetrical argument we can show the following inequalities (analogous to \eqref{Eq: Int_A_1}, \eqref{Eq: Int_A_2}, \eqref{Eq: Int_A_3} and \eqref{Eq: Int_A_4} respectively)
    \begin{equation}\label{Eq: Int_A_1_plus}
        \sum_{\substack{\I_\ell\in\mathscr{F}_{\ell, i}(+,\sigma), \ \B\cap\tilde{\I}_\ell = \emptyset \\ \I_\ell \text{ non-negligible}}}J(\I_\ell^-(\sigma^{s_{\I_\ell}}), A_\sigma\cap \B)\leq \frac{4}{M^\prime_\ell}J(A_\sigma^c , A_\sigma\cap \B),
    \end{equation}
      \begin{equation}\label{Eq: Int_A_2_plus}
         \sum_{\substack{\I_\ell\in\mathscr{F}_{\ell, i}(+,\sigma), \ \B\cap\tilde{\I}_\ell = \emptyset \\ \I_\ell \text{ negligible}}}J(\I_\ell^-(\sigma^{s_{\I_\ell}}), A_\sigma\cap \B)\leq \frac{4}{M^\prime_\ell}J(A_\sigma^c, A_\sigma\cap \B),
    \end{equation}
    \begin{equation}\label{Eq: Int_A_3_plus}
         \sum_{\substack{\I_\ell\in\mathscr{F}_{\ell, i}(+,\sigma), \ \B\subset\tilde{\I}_\ell \\ \I_\ell \text{ negligible}}}J(\I_\ell^-(\sigma^{s_{\I_\ell}}), A_\sigma\cap \B)\leq 2\overline{c}_3(M^\prime_\ell)^{1-\alpha}J(\B\cap A_\sigma^c, \B\cap A_\sigma),
    \end{equation}
    \begin{equation}\label{Eq: Int_A_4_plus}
         \sum_{\substack{\I_\ell\in\mathscr{F}_{\ell, i}(+,\sigma), \ \B\subset\tilde{\I}_\ell \\ \I_\ell \text{ non-negligible}}}J(\I_\ell^-(\sigma^{s_{\I_\ell}}), A_\sigma \cap \B)\leq 2\overline{c}_3(M^\prime_\ell)^{1-\alpha}J(\B\cap A^c_\sigma, \B\cap A_\sigma).
    \end{equation}
    Combining \eqref{Eq: Int_A_1_plus}, \eqref{Eq: Int_A_2_plus}, \eqref{Eq: Int_A_3_plus} and \eqref{Eq: Int_A_4_plus} we get
    \begin{equation*}
        \sum_{\B\in\mathcal{I}_\ell^i}J(F_{\ell, i}(+,\sigma), A_\sigma^c\cap \B) \leq  (64+4\overline{c}_3)(M^\prime_\ell)^{1-\alpha} J(A_\sigma, A_\sigma^c).
    \end{equation*}
    Plugging this inequality back in \eqref{Eq: Split_F plus} gives us  \eqref{Eq: Interation_F+}.
\end{proof}

\subsection{Entropy bound}\label{sec: entropy bound}

In this subsection, we first prove Proposition \ref{prop: entropy argument} and then use it to show Proposition \ref{prop: free energy control}.
\begin{proof}[\textbf{Proof of Proposition~\ref{prop: entropy argument}}]
    Take any $A\in\mathcal{A}_Q(\I_n)$. Recalling Definition \ref{Def: Definition of Psi}, we can view $\Psi_{\ell}(A)$ as a map with input given by the intervals in $\mathcal{I}_\ell^0$, see Figure \ref{Fig: Psi}. In order to prove $\eqref{eq: entropy bound from l+1 to l}$, we first upper-bound $|\Psi_\ell(A^\prime) : A^\prime\in\mathcal{A}_Q(\I_n), \Psi_{\ell + 1}(A^\prime) = \Psi_{\ell+1}(A)|$, the number of possible maps $\Psi_\ell(A^\prime)$  that satisfy $\Psi_{\ell+1}(A^\prime,\I_{\ell+1})=\Psi_{\ell+1}{(A,\I_{\ell+1})}$ for every interval $\I_{\ell+1}\in\mathcal{I}_{\ell+1}^0$.  For any interval $\B\in \mathcal{I}_\ell^{0}$, let $\hat{\B}$ be the $(\ell+1)$-interval in $\mathcal{I}_{\ell+1}^0$ such that $\B\subset\hat{\B}$. Then if $\Psi_{\ell+1}(A,\hat{\B})\neq 0$, we get from \eqref{eq: defi of psi} that  \begin{equation}\label{eq: information from level l+1}
        \Psi_{\ell}(A,\B)=\Psi_{\ell+1}(A,\hat{\B}).
    \end{equation} 
    
    Letting 
    \begin{equation}\label{eq: cal_I} 
        \mathscr{I}(A)\coloneqq \{\I_{\ell+1}\in\mathcal{I}_{\ell+1}^{0} : \Psi_{\ell+1}(A,\I_{\ell+1})= 0\},
    \end{equation}
    by \eqref{eq: information from level l+1}, we only need to choose the values for the $\ell$-intervals $\B$ such that $\hat{\B}\in \mathscr{I}(A)$.  Since there are $2|\mathscr{I}(A)|$ such intervals and each has three possible values, the number of possible choices is upper-bounded by $3^{2|\mathscr{I}(A)|}$, i.e., 
    \begin{equation}\label{eq: Psi_A_1}
     |\Psi_\ell(A^\prime) : A^\prime\in\mathcal{A}_Q(\I_n), \Psi_{\ell + 1}(A^\prime) = \Psi_{\ell+1}(A)| \leq 3^{2|\mathscr{I}(A)|}.
    \end{equation}
     By Lemma~\ref{lem: procedure stops}, $\sigma^S$ is balanced in $\rho_\frac{3}{2}(\I_\sigma)$. As every $\hat{\B}\in \mathscr{I}(A)$ is contained in $\rho_\frac{3}{2}(\I_\sigma)$, we get that $\sigma^S$ is balanced in $\hat{\B}$. Lemma~\ref{Lemma: interacton_2} then yields that $\overline{c}_2 2^{(\ell+1)\theta} \leq J(A\cap \rho_{\frac{3}{2}}(\hat{\B}),A^c\cap \rho_{\frac{3}{2}}(\hat{\B})).$
    Summing over $\hat{\B}\in \mathscr{I}(A)$ we get that
    \begin{equation}\label{eq: non constant interval number bound 1}
        |\mathscr{I}(A)|\overline{c}_2\cdot 2^{(\ell+1)\theta}\le \sum_{\hat{\B}\in \mathscr{I}(A)}J(A\cap \rho_{\frac{3}{2}}(\hat{\B}),A^c\cap \rho_{\frac{3}{2}}(\hat{\B}))\le 2J(A,A^c),
    \end{equation}
    where the last inequality comes from the fact that each $J(x, y)$ with $x\in A, y\in A^c$ is accounted for at most twice in total when summing $J(A\cap \rho_{\frac{3}{2}}(\hat{B}), A^c\cap \rho_{\frac{3}{2}}(\hat{\B}))$ over $\hat{\B}\in \mathscr{I}(A)$. From \eqref{eq: non constant interval number bound 1} we get that 
    \begin{equation}\label{eq: non constant interval number bound 2}
        |\mathscr{I}(A)|\le \frac{2J(A,A^c)}{\overline{c}_22^{(\ell+1)\theta}}\le \frac{{4Q}}{\overline{c}_22^{(\ell+1)\theta}}.
    \end{equation} 
     Plugging \eqref{eq: non constant interval number bound 2} into \eqref{eq: Psi_A_1} we get $|\Psi_\ell(A^\prime) : A^\prime\in\mathcal{A}_Q(\I_n), \Psi_{\ell + 1}(A^\prime) = \Psi_{\ell+1}(A)|\leq \exp{\left(\frac{{8\ln(3)Q}}{\overline{c}_22^{(\ell+1)\theta}}\right)}$. As this upper bound does not depend on $\Psi_{\ell+1}(A)$, we can sum over all possible choices of $\Psi_{\ell+1}(A)$ in $\{\Psi_{\ell+1}(A): A\in\mathcal{A}_Q(\I_n)\}$ and get
     \begin{equation*}
        E(\ell,Q,\I_n)\le E(\ell+1,Q, \I_n)\times \exp{\left(\frac{{8\ln(3)Q}}{\overline{c}_22^{(\ell+1)\theta}}\right)},
    \end{equation*} and the desired result follows from taking $c_4 \coloneqq \frac{{8\ln(3)}}{\overline{c}_22^{\theta}}$. 
\end{proof}

\begin{proof}[\textbf{Proof of Proposition~\ref{prop: free energy control}}]
    Using Proposition \ref{prop: entropy argument}, our proof follows \cite{FFS84} closely (see also \cite{Bovier.06} for a more detailed version). Recall Definition \ref{Def: bad event}. Given $A\in \hat{\mathcal{A}}(\I_n)$,  for each $1\leq \ell \leq n-3$, let $A_\ell$ be the union of $\ell$-intervals $\I_\ell\in\mathcal{I}_\ell^{0}$ such that $A\cap \I_\ell(x)\neq \emptyset$, that is, $A_{\ell}\coloneqq \displaystyle \bigcup_{\substack{\I_\ell\in\mathcal{I}_\ell^{0} : A\cap \I_\ell(x)\neq \emptyset}}\I_\ell(x)$. By the definition of $\Psi(A)$, we can also write $A_\ell= \displaystyle \bigcup_{\substack{\I_\ell\in\mathcal{I}_\ell^{0} : \Psi(A, \I_\ell) \neq 1}}\I_\ell$. Recalling \eqref{eq: hamiltonian bound}, we write \begin{equation}\label{eq: external field expansion into levels}
       \Delta_A(h)=\sum_{\ell=0}^{n-3}\big(\Delta_{A_{\ell}}(h)-\Delta_{A_{\ell+1}}(h)\big),
    \end{equation} 
    with the convention that $A_0=A$ and $h_{A_{n-2}} = h_\emptyset = 0$. By Lemma \ref{Lemma: Concentration.for.Delta.General}, we have for any $z_\ell>0$
    \begin{equation*}
        \mathbb P\left(|\Delta_{A_{\ell}}(h)-\Delta_{A_\ell+1}(h)|\ge z_\ell\right)\le C_1\exp\left(-\frac{z_\ell^2}{2{\varepsilon^2}|A_{\ell}\Delta A_{\ell+1}|}\right).
    \end{equation*}
    Recall our definition of $\mathscr{I}(A)$ in \eqref{eq: cal_I}, and recall that for any $\B\in \mathcal{I}_\ell^{0}$, $\hat{\B}\in \mathcal{I}_{\ell+1}^0$ is the $(\ell+1)$-interval such that $\B\subset\hat{\B}$.  To upper-bound the symmetric difference, first notice that $A_\ell\subset A_{\ell+1}$, by the definition of the $\Psi_\ell(A)$ map as in \eqref{eq: defi of psi}. Moreover, for any $x\in  A_{\ell+1}\setminus A_\ell$, there exists an $\ell$-interval $\B\in \mathcal{I}_\ell^{0}$ such that $x\in \B$, $\Psi_\ell(A,\B)=1$ and  $\Psi_{\ell+1}(A,\hat{\B})= 0$. Thus we get that
    $|A_{\ell}\Delta A_{\ell+1}|\le 2^{\ell}|\{\B\in \mathcal{I}_\ell^{0} : \Psi_{\ell+1}(A,\hat{\B})=0\}| = {2^{\ell+1}}|\mathscr{I}(A)|$. Using \eqref{eq: non constant interval number bound 2}, we get $|A_{\ell}\Delta A_{\ell+1}|\le \frac{4J(A,A^c)}{\overline{c}_22^{{(\ell+1)}\theta}}\cdot 2^{\ell}$ for $0\le \ell\le n-3$, and therefore
    
    \begin{equation}\label{eq: gaussian inequality at level l}
        \mathbb P(|\Delta_{A_{\ell}}(h)-\Delta_{A_{\ell+1}}(h)|\ge z_\ell) \le C_1\exp{\left(-\frac{z_\ell^2\cdot \overline{c}_22^{{(\ell+1)}\theta}}{{8\varepsilon^2}J(A,A^c){2^{\ell}}}\right)}.
    \end{equation}
   
    In order to give a union bound, we define $\hat{\cA}_{{Q}}(\I_n)=\hat{\cA}(\I_n)\cap\cA_Q(\I_n)$. As the interval $\I_n$ is fixed, we will omit it from the notation of $\hat{\cA}_{{Q}}$ for simplicity. Recalling the definition $E(\ell,Q,\I_n)$ from Proposition~\ref{prop: entropy argument}, since each $\Psi_\ell(A)$ (seen as a map) uniquely determines $A_\ell$, we have $|\{A_\ell : A\in \hat{\mathcal{A}}_Q\}|\leq |\{\Psi_\ell(A) : A\in\mathcal{A}_Q\}| = E(\ell, Q, \I_n)$. Thus, from \eqref{eq: gaussian inequality at level l} we get that for $z_{\ell, Q} \geq 0$, 
    \begin{align}\label{eq: union bound 1 at level l}
        &\mathbb P(\sup_{A\in \hat{\cA}_Q}|\Delta_{A_{\ell}(h)}-\Delta_{A_{\ell+1}}(h)| \ge z_{\ell,Q})\nonumber\\ \le~& E(\ell,Q,\I_n)\cdot E(\ell+1,Q, \I_n)\cdot C_1\exp(-\frac{z_{\ell,Q}^2\cdot \overline{c}_22^{{(\ell+1)}\theta}}{16\varepsilon^2Q2^\ell}).
    \end{align} 
    Applying \eqref{eq: entropy bound} to \eqref{eq: union bound 1 at level l} and letting ${z_{\ell,Q}=\frac{C_2Q}{2^{(\ell+1)\delta}}}$, we get that 
    \begin{align}
        \mathbb P(\sup_{A\in \hat{\cA}_Q}|\Delta_{A_{\ell}}(h)-\Delta_{A_{\ell+1}}(h)|\ge z_{\ell,Q}) \le {\frac{C_1}{C_3}}\exp(\frac{C_3Q}{2^{\ell \theta}}-\frac{{C_4}Q}{\vareps^2\cdot 2^{\ell(1-\theta+2\delta)}}).\nonumber
    \end{align}
    By the definition of $\hat{\mathcal{A}}(\I_n)$, we only need to consider $Q\ge c_2\cdot 2^{\theta n}$, and by our choice of $\delta$ (at the beginning of this section) we have  $\theta>1-\theta+2\delta$. Thus we get that 
    \begin{equation}\label{eq: union bound 2 at level l}
        \mathbb P(\sup_{A\in \hat{\cA}_Q}|\Delta_{A_{\ell}}(h)-\Delta_{A_{\ell+1}}(h)|\ge {\frac{C_2Q}{2^{(\ell+1)}\delta}})\le C_{5}\exp(-\frac{Q}{C_{5}\vareps^2\cdot 2^{\ell(1-\theta+2\delta)}}).
    \end{equation}
    Let $C_2>0$ be small enough depending on $\delta$, we get that $\sum_{\ell=0}^{n-3}\frac{C_2Q}{\vareps\cdot 2^{(\ell+1)\delta}}\le \frac{0.1Q}{\vareps}$. Thus we get from \eqref{eq: union bound 2 at level l} and a union bound that
    \begin{align}
        &\mathbb P(\sup_{A\in \hat{\cA}_Q}|\Delta_A(h)|\ge {{0.1}J(A,A^c)})\le \sum_{\ell=0}^{n-3}\mathbb P(\sup_{A\in \hat{\cA}_Q}|\Delta_{A_{\ell}}(h)-\Delta_{A_{\ell+1}}(h)|\ge {\frac{C_2Q}{2^{(\ell+1)\delta}}})\nonumber\\\le~& \sum_{\ell=0}^{n-3}C_{ 5}\exp(-\frac{Q}{C_{ 5}\vareps^2\cdot 2^{\ell(1-\theta+2\delta)}})\le C_{ 6}\exp(-\frac{Q}{C_{ 6}\vareps^2\cdot 2^{(n-2)(1-\theta+2\delta)}}).\label{eq: all levels union bound}
    \end{align}
    Summing over $Q\ge c_2\cdot 2^{\theta n}$ in \eqref{eq: all levels union bound}, taking a union bound and choosing $\varepsilon>0$ small enough gives the desired result (recall $\theta > 1- \theta + 2\delta$).
\end{proof}
 
\section{Phase Transition in 2d}\label{sec: 2d phase transition}
Throughout this section, we will always consider the long-range Ising model in $d=2$ with $2<\alpha\leq 3$.

Our proof of phase transition in two dimensions builds upon the ideas of \cite{Affonso_Bissacot_Maia_23}. Their argument uses an improved version of the long-range contour system of \cite{affonso_24}, together with the approach of \cite{Ding2021} and the coarse-graining technique of \cite{FFS84}. For $2<\alpha<3$, phase transition follows after we make a simple modification: we replace the coarse-graining estimations depending on the boundary of a contour with estimations depending on the interactions between the flipped region and the outside. This, however, is not enough to reach the critical value $\alpha=3$, for which we must also improve the counting argument in \cite{FFS84}. For the convenience of comparing notes with \cite{Affonso_Bissacot_Maia_23}, we use the same notations of \cite{Affonso_Bissacot_Maia_23} whenever possible.

Let us first introduce the long-range contour system. Given $\sigma\in\Omega^+$, the \textit{incorrect} points of $\sigma$ are defined to be $\partial\sigma \coloneqq \{x\in\Z^2 : \sigma_x=-\sigma_y \text{ for some }y\in\Z^2 \text{ with } |x-y|=1\}$. For a finite $A\Subset\Z^2$, we denote by $V(A)$ the points that are separated by $A$  from infinite (when viewing $\Z^2$ as a nearest-neighbor graph), including $A$ itself. We use the following definition to partition $\partial \sigma$.

    \begin{definition}\label{Def: delta-partiton}
    Let $M, a>0$. For each $A\Subset\Z^2$, a set $\Gamma \coloneqq \{\overline{\gamma} : \overline{\gamma} \subset A\}$ is called an $(M,a)$-\emph{partition} when the following two conditions are satisfied.
	\begin{enumerate}[label=\textbf{(\Alph*)}, series=l_after] 
		\item They form a partition of $A$, i.e.,  $\bigcup_{\overline{\gamma} \in \Gamma}\overline{\gamma}=A$ and $\overline{\gamma} \cap \overline{\gamma}' = \emptyset$ for distinct elements of $\Gamma$.  
		
		\item For all $\overline{\gamma}, \overline{\gamma}^\prime \in \Gamma$, 
			\be\label{B_distance_2}
			\d(\overline{\gamma},\overline{\gamma}') > M\min\left \{|V(\overline{\gamma})|,|V(\overline{\gamma}')|\right\}^\frac{a}{2}.
			\ee
	\end{enumerate}
\end{definition}
We choose the constants $M$ and $a$, similar to \cite{Affonso_Bissacot_Maia_23}, that is,  $a \coloneqq a(\alpha) = \frac{6}{\alpha-2}$ and $M>0$ is a large constant to be determined later. Moreover, for any $A\Subset \Z^2$, we let $\Gamma(A)$ be the finest $(M,a)$-partition of $A$, in the following sense:
given any other $(M,a)$-partition  $\Gamma^\prime$ it holds that, for all $\overline{\gamma}\in\Gamma$ there exists $\overline{\gamma}^\prime\in\Gamma^\prime(A)$ with $\overline{\gamma}\subset \overline{\gamma}^\prime$. There is an explicit construction for the finest $(M,a)$-partition: given any two $(M,a)$-partitions $\Gamma$ and $\Gamma^\prime$, the intersection of $\Gamma$ and $\Gamma^\prime$, defined as
    $\Gamma\cap\Gamma^\prime\coloneqq \{\overline{\gamma}\cap\overline{\gamma}^\prime : \overline{\gamma}  \in \Gamma , \ \overline{\gamma} \in\Gamma^\prime, \ \overline{\gamma} \cap\overline{\gamma}^\prime\neq \emptyset\}$,
 is an $(M,a)$-partition (both conditions \textbf{(A)} and \textbf{(B)} are easy to check). As the number of $(M, a)$-partitions of a finite set is finite, the intersection of all $(M, a)$-partitions of $A$ is a well-defined $(M, a)$-partition and it is also the finest. 
We write $\Gamma(\sigma)=\Gamma(\partial\sigma)$ to simplify the notation. The contours are then pairs $\gamma = (\overline{\gamma}, \lab_{\overline{\gamma}})$, where $\overline{\gamma}\in\Gamma(\sigma)$ for some configuration $\sigma$, and $\lab_{\overline{\gamma}}$ is a function that attributes a sign to each connected component of $\Z^2\setminus \overline{\gamma}$ (for every connected component $C\in \Z^2\setminus \overline{\gamma}$, $\lab_{\overline{\gamma}}(C)$ is the sign of $\sigma$ on the inner boundary of $C$. The sign on the boundary is constant by \cite[Lemma 3.10]{affonso_24}, see Figure \ref{Fig_contour_2d} for an illustration). The \textit{support} of $\gamma$ is $\Sp(\gamma)\coloneqq \overline{\gamma}$, and its \textit{interior} is $\Int(\gamma) \coloneqq V(\gamma)\setminus \Sp(\gamma)$, where  $V(\gamma)\coloneqq V(\Sp(\gamma))$. We also denote $|\gamma|\coloneqq |\Sp(\gamma)|$. Denoting by $\Int^{(1)}(\gamma),\Int^{(2)}(\gamma),\dots \Int^{(k)}(\gamma)$ the connected components of $\Int{(\gamma)}$, we can split the interior into the \textit{plus interior} $\Int_+(\gamma)$ and the \textit{minus interior} $\Int_-(\gamma)$, defined as
\begin{equation*}
    \Int_\pm(\gamma) = \hspace{-0.7cm}\bigcup_{\substack{1\le j\le k, \\ \lab_{\overline{\gamma}}(\Int^{(j)}(\gamma))=\pm 1}}\hspace{-0.7cm}\Int^{(j)}(\gamma).
\end{equation*}

A contour $\gamma\in\Gamma(\sigma)$ is \textit{external} when $V(\gamma)\not\subset V(\gamma^\prime)$ for all $\gamma^\prime\in \Gamma(\sigma)\setminus \{\gamma\}$. See Figure \ref{Fig_contour_2d} for an illustration of contours and minus interior.
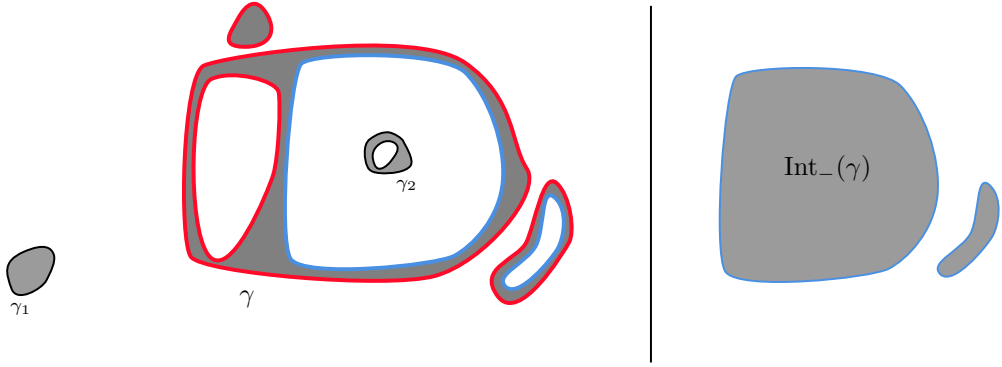
\begin{figure}[ht]
    \centering
    \tikzset{every picture/.style={line width=0.75pt}} 

\begin{tikzpicture}[x=0.75pt,y=0.75pt,yscale=-1,xscale=1]

\draw  [color={rgb, 255:red, 255; green, 10; blue, 40 }  ,draw opacity=1 ][fill={rgb, 255:red, 128; green, 128; blue, 128 }  ,fill opacity=1 ][line width=1.5]  (114.55,40.13) .. controls (125.47,35.3) and (220.51,20.81) .. (246.18,38.68) .. controls (271.85,56.54) and (269.12,79.72) .. (277.31,92.04) .. controls (285.51,104.35) and (255.74,139.72) .. (230.89,146.36) .. controls (206.04,153) and (114.2,144.93) .. (108,136.7) .. controls (101.79,128.48) and (103.63,44.95) .. (114.55,40.13) -- cycle ;
\draw  [color={rgb, 255:red, 255; green, 10; blue, 40 }  ,draw opacity=1 ][fill={rgb, 255:red, 255; green, 255; blue, 255 }  ,fill opacity=1 ][line width=1.5]  (118.37,46.89) .. controls (129.3,42.06) and (151.15,47.61) .. (152.24,53.65) .. controls (153.33,59.68) and (152.24,86.24) .. (149.51,95.17) .. controls (146.78,104.11) and (127.66,150.22) .. (116.74,135.74) .. controls (105.81,121.25) and (107.45,51.71) .. (118.37,46.89) -- cycle ;
\draw  [color={rgb, 255:red, 74; green, 144; blue, 226 }  ,draw opacity=1 ][fill={rgb, 255:red, 255; green, 255; blue, 255 }  ,fill opacity=1 ][line width=1.5]  (163.71,39.16) .. controls (169.72,33.37) and (235.26,32.16) .. (246.73,43.99) .. controls (258.2,55.82) and (266.88,79.86) .. (265.3,98.55) .. controls (263.72,117.25) and (250.01,131.39) .. (240.72,136.22) .. controls (231.44,141.05) and (164.8,147.81) .. (158.25,137.67) .. controls (151.69,127.53) and (157.7,44.95) .. (163.71,39.16) -- cycle ;
\draw  [color={rgb, 255:red, 0; green, 0; blue, 0 }  ,draw opacity=1 ][fill={rgb, 255:red, 155; green, 155; blue, 155 }  ,fill opacity=0.3 ][line width=0.75]  (198.15,77.36) .. controls (202.03,74.49) and (204.52,73.8) .. (207.26,73.82) .. controls (210,73.84) and (214.61,76.94) .. (215.06,78.24) .. controls (215.5,79.55) and (221,87.69) .. (218.68,90.77) .. controls (216.35,93.85) and (208.99,94.88) .. (203.25,94.42) .. controls (197.51,93.96) and (197.25,91.67) .. (196.55,90.01) .. controls (195.84,88.35) and (194.28,80.23) .. (198.15,77.36) -- cycle ;
\draw  [color={rgb, 255:red, 0; green, 0; blue, 0 }  ,draw opacity=1 ][fill={rgb, 255:red, 255; green, 255; blue, 255 }  ,fill opacity=1 ][line width=0.75]  (203,79.8) .. controls (206.88,76.93) and (211.27,78.57) .. (212.05,80.83) .. controls (212.82,83.09) and (211.27,85.14) .. (208.95,88.22) .. controls (206.62,91.3) and (202.38,94.32) .. (200.67,90.28) .. controls (198.97,86.23) and (199.12,82.68) .. (203,79.8) -- cycle ;
\draw  [color={rgb, 255:red, 255; green, 10; blue, 40 }  ,draw opacity=1 ][fill={rgb, 255:red, 128; green, 128; blue, 128 }  ,fill opacity=1 ][line width=1.5]  (277.17,129.31) .. controls (281.54,120.29) and (285.4,92.99) .. (292.46,99.53) .. controls (299.53,106.06) and (301.71,124.09) .. (298.65,129.47) .. controls (295.59,134.84) and (273.67,171.09) .. (262.75,156.61) .. controls (251.83,142.12) and (272.8,138.32) .. (277.17,129.31) -- cycle ;
\draw  [color={rgb, 255:red, 74; green, 144; blue, 226 }  ,draw opacity=1 ][fill={rgb, 255:red, 255; green, 255; blue, 255 }  ,fill opacity=1 ][line width=1.5]  (282.08,130.77) .. controls (286.45,121.76) and (284.48,100.26) .. (291.54,106.79) .. controls (298.61,113.33) and (296.06,127.17) .. (293,132.54) .. controls (289.94,137.92) and (273.34,157.98) .. (266.78,151.86) .. controls (260.23,145.74) and (277.71,139.79) .. (282.08,130.77) -- cycle ;
\draw  [color={rgb, 255:red, 0; green, 0; blue, 0 }  ,draw opacity=1 ][fill={rgb, 255:red, 155; green, 155; blue, 155 }  ,fill opacity=0.3 ][line width=0.75]  (23.56,134.2) .. controls (34.48,129.37) and (37.76,132.26) .. (38.85,135.64) .. controls (39.94,139.02) and (37.17,144.58) .. (35.03,148.68) .. controls (32.88,152.79) and (20.21,160.27) .. (17.51,152.79) .. controls (14.82,145.3) and (12.63,139.02) .. (23.56,134.2) -- cycle ;
\draw    (340,10) -- (340,190) ;
\draw  [color={rgb, 255:red, 74; green, 144; blue, 226 }  ,draw opacity=1 ][fill={rgb, 255:red, 155; green, 155; blue, 155 }  ,fill opacity=1 ][line width=0.75]  (382.98,45.6) .. controls (388.98,39.8) and (454.53,38.6) .. (466,50.43) .. controls (477.47,62.26) and (486.15,86.3) .. (484.57,104.99) .. controls (482.99,123.69) and (469.27,137.83) .. (459.99,142.66) .. controls (450.7,147.49) and (384.07,154.25) .. (377.51,144.11) .. controls (370.96,133.97) and (376.97,51.39) .. (382.98,45.6) -- cycle ;
\draw  [color={rgb, 255:red, 74; green, 144; blue, 226 }  ,draw opacity=1 ][fill={rgb, 255:red, 155; green, 155; blue, 155 }  ,fill opacity=1 ][line width=0.75]  (501.34,124.47) .. controls (505.71,115.46) and (503.75,93.95) .. (510.81,100.49) .. controls (517.88,107.02) and (515.33,120.87) .. (512.27,126.24) .. controls (509.21,131.62) and (492.61,151.67) .. (486.05,145.56) .. controls (479.5,139.44) and (496.98,133.49) .. (501.34,124.47) -- cycle ;
\draw  [color={rgb, 255:red, 255; green, 10; blue, 40 }  ,draw opacity=1 ][fill={rgb, 255:red, 128; green, 128; blue, 128 }  ,fill opacity=1 ][line width=1.5]  (146.4,14.41) .. controls (151.73,23.74) and (146.93,30.92) .. (134.67,30.67) .. controls (122.4,30.41) and (127.73,23.74) .. (131.73,16.41) .. controls (135.73,9.08) and (141.07,5.08) .. (146.4,14.41) -- cycle ;

\draw (131,152.4) node [anchor=north west][inner sep=0.75pt]    {$\gamma $};
\draw (405.67,83.73) node [anchor=north west][inner sep=0.75pt]    {$\Int_{-}( \gamma )$};
\draw (15.33,158.73) node [anchor=north west][inner sep=0.75pt]  [font=\scriptsize]  {$\gamma _{1}$};
\draw (210,96.19) node [anchor=north west][inner sep=0.75pt]  [font=\scriptsize]  {$\gamma _{2}$};

\end{tikzpicture}
    \caption{The gray regions on the left, both light and dark gray, represent the set of incorrect points of a configuration $\sigma$. In this case, $\Gamma(\sigma) = \{\gamma, \gamma_1, \gamma_2\}$, and the dark gray region depicts $\Sp(\gamma)$, while the light gray regions delimits the supports $\Sp(\gamma_1)$ and $\Sp(\gamma_2)$. For each connected component of $\Sp(\gamma)^c$, a red boundary indicates that the label function attributes $+1$ to this region, while a blue boundary indicates the complementary case. On the right, we depict the minus interior $\Int_-(\gamma)$ of $\gamma$.}
    \label{Fig_contour_2d}
\end{figure}
The operation that erases a contour is $\tau_\gamma$, defined as 
\begin{equation*}
    \tau_\gamma(\sigma)_x = 
\begin{cases}
	\;\;\;\sigma_x &\text{ if } x \in \Int_+(\gamma)\cup V(\gamma)^c, \\
	-\sigma_x &\text{ if } x \in \Int_-(\gamma),\\
	+1 &\text{ if } x \in \Sp(\gamma).
\end{cases}
\end{equation*}

With these contours, in \cite{Affonso_Bissacot_Maia_23} they can control the energy gain of erasing a contour. Although their paper focuses only on the case $d\ge 3$, their proofs on the energy and entropy bounds apply to $d=2$ directly.
\begin{proposition}\cite[Proposition 2.10]{Affonso_Bissacot_Maia_23}\label{Prop: Cost_erasing_contour}
	For $M$ large enough, there exists a constant $b_1\coloneqq b_1(\alpha)>0$, such that for  any $\Lambda \Subset \Z^2$, $\sigma\in\Omega^+_\Lambda$ and $\gamma\in \Gamma(\sigma)$ external, it holds that
	\begin{equation*}
	  	H_{\Lambda; 0}^+(\sigma)- H_{\Lambda;0}^+(\tau_{\gamma}(\sigma))\geq b_1\Big(|\gamma| + J\big(\Int_-(\gamma)\big) \Big),  
	\end{equation*}
    with $J(A)\coloneqq J(A,A^c)$ for every $A\subset \Z^2$.
\end{proposition}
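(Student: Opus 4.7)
The plan is to decompose the energy difference $\Delta H \coloneqq H_{\Lambda;0}^+(\sigma) - H_{\Lambda;0}^+(\tau_\gamma(\sigma))$ according to where each interacting pair $(x,y)$ sits, and to show separately that each of the terms $|\gamma|$ and $J(\Int_-(\gamma))$ is controlled by a positive multiple of $\Delta H$. Since $\tau_\gamma$ only modifies spins on $V(\gamma) = \Sp(\gamma) \cup \Int_-(\gamma) \cup \Int_+(\gamma)$, we can write
\[
\Delta H = \sum_{\{x,y\} \cap V(\gamma) \neq \emptyset} J_{xy}\bigl(\tau_\gamma(\sigma)_x \tau_\gamma(\sigma)_y - \sigma_x \sigma_y\bigr),
\]
and further split this sum into contributions from pairs involving $\Sp(\gamma)$ and pairs between $\Int_-(\gamma)$ and its complement (pairs inside $\Int_-(\gamma)$ are unchanged by the flip, and pairs inside $\Int_+(\gamma) \cup V(\gamma)^c$ do not involve $V(\gamma)$ in a nontrivial way).

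First I would handle the $|\gamma|$ term using only the nearest-neighbor interactions. By definition every $x \in \Sp(\gamma) \subset \partial\sigma$ has a lattice neighbor $y$ with $\sigma_x \sigma_y = -1$; after $\tau_\gamma$ the spin at $x$ becomes $+1$, and using the labeling structure of $\gamma$ (in particular \cite[Lemma 3.10]{affonso_24} asserting that labels are constant on connected components of $\Sp(\gamma)^c$) one checks that this bond now carries the product $+1$. Summing the gain over such bonds and using $J_{xy} = 1$ for nearest neighbors, one produces a net contribution of order $|\gamma|$, provided the competing ``bad'' short-range bonds across $\Sp(\gamma)$ (of which there are at most $O(|\gamma|)$) contribute a smaller constant, which is standard for contour arguments.

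Second, for the $J(\Int_-(\gamma))$ term, the model case would be that $\sigma \equiv -1$ on $\Int_-(\gamma)$ and $\sigma \equiv +1$ on $V(\gamma)^c \cup \Int_+(\gamma)$: flipping $\Int_-(\gamma)$ would then change every such cross-pair from product $-1$ to $+1$, yielding exactly $2J(\Int_-(\gamma))$ in gain. In general one has to account for pairs where the sign configuration disagrees with this picture, namely pairs involving spins that are ``incorrect'' relative to the label of their connected component. Every such incorrect site lies in the support of some other contour $\gamma' \in \Gamma(\sigma) \setminus \{\gamma\}$. The main obstacle is showing that the total bad contribution is at most a small fraction of $J(\Int_-(\gamma))$ (so it can be absorbed into a constant $b_1 > 0$), and this is precisely where the $(M,a)$-partition condition \eqref{B_distance_2} enters. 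Since $\d(\overline{\gamma}, \overline{\gamma}') > M \min\{|V(\gamma)|, |V(\gamma')|\}^{a/2}$ with $a = 6/(\alpha-2)$, the total cross-interaction between $\Int_-(\gamma)$ and any other contour can be bounded, using the decay $|x-y|^{-\alpha}$ and summing geometric series over scales of $|V(\gamma')|$, by $C M^{-\kappa}$ times $J(\Int_-(\gamma))$ for some $\kappa > 0$; choosing $M$ large then produces the desired $b_1 > 0$.

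The technical heart of the argument, and the main obstacle, is the second step: carefully bounding the contribution of distant contours to $\Int_-(\gamma)$'s cross-interaction. This requires exploiting $\alpha > 2$ together with the polynomial separation built into the $(M,a)$-partition, and carrying out the sum-over-scales computation while being careful about contours that may themselves be nested inside $\Int_+(\gamma)$. Once this estimate is in place, combining with the short-range lower bound from the first step and taking $M$ large enough yields the proposition with a uniform $b_1(\alpha) > 0$.
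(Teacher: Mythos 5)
This paper does not reprove the proposition---it is quoted directly from \cite{Affonso_Bissacot_Maia_23}, with the remark that the $d\ge 3$ arguments there carry over to $d=2$. So there is no proof in the present paper to compare your attempt against; what you have written is a high-level sketch of how such a contour-energy estimate might go, and at that level the shape (split into a support term and an interior cross-term, bound the spoiler contributions via the $(M,a)$-separation and take $M$ large) is aligned with the general strategy of that reference. But there are two concrete gaps.

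First, the claim that ``every such incorrect site lies in the support of some other contour $\gamma'$'' is false. A site $y\in V(\gamma)^c$ with $\sigma_y=-1$ need not lie in $\partial\sigma$ at all: it can sit deep inside the minus interior of a different external contour (or of a contour nested further in), where every neighbor is also $-1$. Such $y$ belongs to $V(\gamma')$ for some $\gamma'$, not to $\Sp(\gamma')$, and the interiors can be vastly larger than the supports; the separation condition \eqref{B_distance_2} is phrased in terms of $|V(\gamma')|$ precisely to account for this. Second, the inequality that absorbs all spoiler contributions into $C M^{-\kappa} J(\Int_-(\gamma))$ is asserted, not argued, and it is the entire substance of the proposition. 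It requires comparing distant cross-interactions to the local surface term $J(\Int_-(\gamma))$ by splitting into the cases $|V(\gamma')|\le |V(\gamma)|$ and $|V(\gamma')|> |V(\gamma)|$, using the explicit exponent $a=6/(\alpha-2)$ to make the scale-sum converge, and keeping track of the different roles played by $\Int_+(\gamma)$, $\Int_-(\gamma')$, and $\Sp(\gamma')$. The $|\gamma|$ bookkeeping also needs care (the neighbor $y$ of an incorrect $x\in\Sp(\gamma)$ may itself be in $\Sp(\gamma)$ or in $\Int_-(\gamma)$, and the worsened nearest-neighbor bonds must actually be counted, not dismissed as ``standard''). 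As it stands, the proposal names the right obstacle but does not overcome it.
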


Defining $\Omega^+ \coloneqq \cup_{\Lambda\Subset\Z^2}\Omega^+_\Lambda$, let  
\begin{equation*}
    \mathcal{C}_0 \coloneqq \{\gamma \subset\Z^2: 0 \in V(\gamma), \gamma \in \Gamma(\sigma) \text{ for some }\sigma\in\Omega^+, \text{ and } \gamma \text{ external}\}
\end{equation*} be the collection of external contours surrounding the origin and $\mathcal{C}_0(n) \coloneqq \{\gamma \in \mathcal{C}_0: |\gamma|=n\}$ for $n\geq 0$. These long-range contours are in general not connected, but the control of the mutual distances between connected components of a contour (ensured by \eqref{B_distance_2} and the finest condition) is enough to yield an upper bound on the enumeration of these contours.

\begin{proposition}\cite[Corollary 3.28]{Affonso_Bissacot_Maia_23}\label{Cor: Bound_on_C_0_n}
	Let  $\Lambda\Subset \mathbb{Z}^2$. There exists a constant $b_2\coloneqq b_2(\alpha) >0$ such that, for all $n\geq 1$, $|\mathcal{C}_0(n)| \leq e^{b_2 n}$.  
\end{proposition}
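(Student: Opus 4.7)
The plan is to bound $|\mathcal C_0(n)|$ by enumerating each $\gamma\in\mathcal C_0(n)$ through its connected-component decomposition, exploiting the tree-like linking structure imposed by the \emph{finest} $(M,a)$-partition condition, together with the standard 2D lattice-animal counting for each component. First I would decompose $\Sp(\gamma)=\overline\gamma$ into nearest-neighbor connected components $C_1,\dots,C_m\subset\Z^2$ with sizes $n_j\coloneqq|C_j|$ summing to $n$. The key structural fact is that, since $\overline\gamma$ cannot be refined further as an $(M,a)$-partition, for every bipartition $A\sqcup B$ of $\{C_1,\dots,C_m\}$ into nonempty subfamilies we have $d(\bigcup_{i\in A}C_i,\bigcup_{j\in B}C_j)\le M\min\{|V(\cup_A)|,|V(\cup_B)|\}^{a/2}$ --- otherwise \eqref{B_distance_2} would allow a strict refinement. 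Applied in particular to the singleton-vs-rest splitting, and using the two-dimensional bound $|V(C_j)|\le n_j^{2}$, this forces each $C_j$ to sit within distance $Mn_j^{a}$ of $\bigcup_{i\ne j}C_i$, yielding a connectedness graph on $\{1,\dots,m\}$ from which one extracts a spanning tree $T$ rooted at the component closest to the origin.

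Second, I would enumerate $\gamma$ through the following ingredients: (a)~a composition $(n_1,\dots,n_m)$ of $n$, contributing at most $2^{n}$ choices; (b)~the shape of each $C_j$ anchored at a fixed site, giving at most $c_0^{n_j}$ possibilities by the standard 2D lattice-animal bound, hence $c_0^{n}$ in total; (c)~the label $\lab_{\overline\gamma}$, which assigns a sign to each of the $O(n)$ connected components of $\Z^2\setminus\overline\gamma$, contributing $2^{O(n)}$; (d)~the position of the root $C_1$ inside a polynomial-in-$n$ box around the origin (since $\gamma$ surrounds $0$ and $|\gamma|=n$); and (e)~the position of each non-root $C_j$ relative to its parent in $T$, confined to a box of side $O(Mn_j^{a})$, hence at most $O(M^2 n_j^{2a})$ positional choices per component.

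Combining all estimates and summing over the composition, the quantity to control is of the form
\begin{equation*}
\sum_{m=1}^{n}(\text{tree-structure factor})\cdot M^{2m}\sum_{n_1+\cdots+n_m=n}\prod_{j=1}^{m}n_j^{2a}.
\end{equation*}
The main obstacle will be the tree-structure factor: a naive abstract enumeration via Cayley's formula ($m^{m-2}$) is too lossy and would produce a super-linear exponent of order $n\log n$. Avoiding this requires using a \emph{canonical} (geometrically determined) choice of tree --- for instance, an outside-in processing or a dyadic-scale grouping --- so that only the positional and shape entropies enter multiplicatively. Granted such a canonical ordering, invoking AM--GM ($\prod_j n_j\le(n/m)^{m}$) one gets $\prod_j n_j^{2a}\le(n/m)^{2am}$; the regime $2<\alpha\le 3$ then enters decisively, since $a=\frac{6}{\alpha-2}\ge 6$ makes the inner maximum $m\mapsto M^{2m}(n/m)^{2am}$ attained at $m\sim n/e$ with value only exponential in $n$. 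Absorbing this factor, together with $c_0^{n}$, $2^{O(n)}$, and polynomial prefactors, into a single constant $b_2=b_2(\alpha)$ yields the claimed $|\mathcal C_0(n)|\le e^{b_2 n}$.
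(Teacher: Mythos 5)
The paper does not prove this proposition; it simply imports it verbatim from \cite{Affonso_Bissacot_Maia_23} (Corollary 3.28 there, built on the cluster enumeration machinery of \cite{affonso_24}), so there is no in-paper argument to compare against, and your sketch must be judged on its own terms.

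Your sketch has the right ingredients (composition of $n$, lattice-animal counts per component, label entropy, positional entropy), but there is a genuine gap exactly at the step you flag yourself and then assume away: the passage from the finest-$(M,a)$-partition property to a \emph{canonically reconstructible} tree with per-edge positional cost $O(M^2 n_j^{2a})$. Your claim that the singleton-vs-rest bipartition ``yields a connectedness graph on $\{1,\dots,m\}$ from which one extracts a spanning tree'' does not hold as stated: it only shows that each $C_j$ is within $O(Mn_j^a)$ of \emph{some} other component, i.e.\ each vertex has out-degree $\geq 1$ in a nearest-component digraph, and that digraph need not be connected as an undirected graph (two tight pairs that are mutually far apart satisfy all singleton-vs-rest inequalities). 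Connectivity is forced only by applying condition \eqref{B_distance_2} to \emph{arbitrary} bipartitions $A\sqcup B$, but then the distance scale is $M\min\{|V(\cup_A)|,|V(\cup_B)|\}^{a/2}$, which is governed by the aggregate volume and not by any individual $n_j$; this is precisely why a naive tree does not give multiplicative per-component costs and why \cite{Affonso_Bissacot_Maia_23,affonso_24} resort to a genuinely multi-scale recursive decomposition. Until that step is filled in, the enumeration is incomplete. Two smaller remarks: the AM--GM step and the claim that ``$a\ge 6$ enters decisively'' are a red herring---$M^{2m}(n/m)^{2am}$ is $e^{O_a(n)}$ for any fixed $a>0$, so nothing special about $2<\alpha\le 3$ is used there; and item (d) needs the observation that the tree traversal bounds $\mathrm{diam}(\overline\gamma)$ by $O(Mn^a)$, so the root lies in a box of polynomial area, which you implicitly use but should state.
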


As the approach in \cite{Affonso_Bissacot_Maia_23} relies on the ideas of \cite{Ding2021}, we define the good event
$$\mathcal{E}\coloneqq \left\{\Delta_{\Int_-(\gamma)}(h){\le} \frac{b_1|\gamma| + b_1 J({\Int_-(\gamma)})}{4}, \ \forall \gamma\in\mathcal{C}_0 \right\},$$ where $\Delta_A$ is the error function defined as in \eqref{Eq: Delta}, but in two dimensions. We remark that Lemma \ref{Lemma: Concentration.for.Delta.General} also holds when $d=2$. Then, the proof of phase transition in two dimensions follows the same steps of \cite[Theorem 4.1]{Affonso_Bissacot_Maia_23} once we prove the following result.
\begin{proposition}\label{prop: goal in 2d}
    For any $2<\alpha\le 3$, there exists a constant $C\coloneqq C(\alpha)$ such that $\mathbb{P}(\mathcal{E}^c)\leq e^{-\frac{C}{\varepsilon^2}}$ for all $\epsilon \leq C$.
\end{proposition}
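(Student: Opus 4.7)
\medskip

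\noindent\textbf{Proof plan.} The overall strategy is a union bound over $\gamma\in\mathcal{C}_0$ combined with a Fr\"ohlich--Sidoravicius--Spencer type multiscale coarse graining applied to the subgaussian variables $\Delta_{\Int_-(\gamma)}(h)$. I would stratify $\mathcal{C}_0$ by $|\gamma|=n$ and by dyadic ranges $J(\Int_-(\gamma))\in[Q,2Q)$; Proposition \ref{Cor: Bound_on_C_0_n} furnishes $|\{\gamma\in\mathcal{C}_0:|\gamma|=n\}|\le e^{b_2 n}$ as the global combinatorial cost. Fixing such a $\gamma$ and writing $A\coloneqq\Int_-(\gamma)$, approximate $A$ at each dyadic scale by a set $A_\ell$ which is a union of cubes of side $2^\ell$ (for instance, taking every such cube that is at least half-occupied by $A$), so that $A_0\approx A$ and $A_L=\emptyset$ for some $L$ of order $\log\diam(A)$. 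The telescoping identity
\[
\Delta_A(h)=\sum_{\ell=0}^{L-1}\bigl(\Delta_{A_\ell}(h)-\Delta_{A_{\ell+1}}(h)\bigr),
\]
combined with Lemma \ref{Lemma: Concentration.for.Delta.General}, gives subgaussian tails for each increment with variance proxy $\vareps^2|A_\ell\Delta A_{\ell+1}|$.

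The heart of the argument is to allocate to the $\ell$-th increment a fraction $\eta_\ell$ of the energy budget $\tfrac{b_1}{8}(|\gamma|+Q)$ with $\sum_\ell \eta_\ell\le 1$. One has $|A_\ell\Delta A_{\ell+1}|\lesssim N_\ell\cdot 2^{2\ell}$, where $N_\ell$ is the number of scale-$\ell$ cubes meeting but not entirely contained in $A$. For $2<\alpha<3$ a dyadic cube $C$ of side $2^\ell$ satisfies $J(C,C^c)\gtrsim 2^{\ell(4-\alpha)}$, so every boundary cube contributes at least this amount to $J(A)$, giving $N_\ell\lesssim Q\cdot 2^{-\ell(4-\alpha)}$. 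The scale-$\ell$ Gaussian bound then takes the form $\exp\!\bigl(-c\,Q\,\eta_\ell^{\,2}\,2^{\ell(\alpha-2)}/\vareps^2\bigr)$. Choosing $\eta_\ell\propto 2^{-\ell\delta}$ with $\delta>0$ small enough that $\alpha-2-2\delta>0$ makes the exponent geometrically increasing in $\ell$, and a union bound over scales $\ell$, then over $Q=2^k\max(n,1)$ with $k\ge 0$, then over $n\ge 1$, absorbs the entropy $e^{b_2 n}$ to yield $\P(\mathcal{E}^c)\le\exp(-C/\vareps^2)$ in the non-critical regime.

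The main obstacle is the critical case $\alpha=3$. The improved interaction estimate (the forthcoming Lemma \ref{Lemma: Int_boundary_cubes}) gives $J(C,C^c)\sim 2^{2\ell}\cdot\ell$ for a scale-$\ell$ cube, beating the Imry--Ma noise only by a logarithmic factor; the scale-$\ell$ exponent becomes $\sim Q\,\eta_\ell^{\,2}\,\ell/\vareps^2$, which is no longer geometric, so the bound obtained by summing over $\ell$ with $\eta_\ell$ summable is delicate and, crucially, the coarse entropy $e^{b_2 n}$ can no longer be absorbed in one blow. The remedy is to refine the enumeration \emph{level by level}: instead of paying the full $e^{b_2 n}$ up front, one bounds the number of distinct $\ell$-coarse images $A_\ell$ of contours with $|\gamma|=n$ and $J(A)\in[Q,2Q)$ separately at each scale (analogously to \eqref{eq: entropy bound from l+1 to l} in the one-dimensional proof) and distributes this per-scale entropy through the multiscale union bound. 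Combined with an appropriate choice of $\eta_\ell$ (summable but decaying only polynomially in $\ell$, so that the logarithmic gain in $J$ is enough), this produces the Gaussian tail $\exp(-C/\vareps^2)$. Establishing this scale-sensitive enumeration for the possibly disconnected long-range contours, and coupling it cleanly with the improved bound on $J(C,C^c)$, is where I expect the real work to lie.
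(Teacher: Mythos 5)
Your plan for $2<\alpha<3$ is essentially the paper's argument (dyadic stratification by $|\gamma|$ and $J(\Int_-(\gamma))$, telescoping the increments $\Delta_{B_\ell}-\Delta_{B_{\ell+1}}$, subgaussian tails, per-scale entropy), but it contains a sign slip: with $|A_\ell\Delta A_{\ell+1}|\lesssim Q\,2^{\ell(\alpha-2)}$ and threshold $\eta_\ell Q$, the subgaussian exponent is $\eta_\ell^2 Q/(\varepsilon^2 2^{\ell(\alpha-2)})$, i.e.\ with $2^{\ell(\alpha-2)}$ in the \emph{denominator}, so the per-scale exponent \emph{decreases} geometrically rather than increases as you claim. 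The argument still closes for $\alpha<3$ (the dominant, smallest exponent sits at $\ell=L_Q$ and is $\propto Q^{(6-2\alpha)/(4-\alpha)}/\varepsilon^2$, a positive power of $Q$), but the monotonicity picture in your write-up is reversed. Also, for a cube of side $2^\ell$ at $\alpha=3$ one has $J(\mathrm{C},\mathrm{C}^c)\sim\ell\,2^\ell$, not $2^{2\ell}\ell$.

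The genuine gap is in the critical case. Your remedy is to bound the per-scale number of coarse images and to use a fixed allocation $\eta_\ell$ that is ``summable but decaying only polynomially.'' This does not close. At $\alpha=3$ the per-scale entropy behaves like $\exp\!\big(C\,Q/2^{r\ell}\big)$ while the Gaussian gain for a threshold $\eta_\ell Q$ behaves like $\exp\!\big(-c\,\eta_\ell^2 Q/(\varepsilon^2 2^{r\ell})\big)$, so beating the entropy at scale $\ell$ requires $\eta_\ell\gtrsim\varepsilon$ \emph{uniformly in $\ell$}. Since the number of active scales is $L_Q\sim\log Q$ and $\sum_\ell\eta_\ell\le 1$, this is impossible once $Q$ is large compared to $e^{1/\varepsilon}$; the sum over $Q$ then diverges. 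The paper's fix is not a cleverer fixed $\eta_\ell$ but a \emph{contour-dependent, per-scale energy budget}: it introduces the quantity $Q_\ell(\Int)$, the interaction across scale-$\ell$ admissible-cube boundaries computed on shrunk cubes $\widehat{\mathrm{C}}$ (Lemma~\ref{Lemma: Large_int}), proves these contributions are disjoint across scales so that $\sum_\ell Q_\ell(\Int)\le J(\Int)$ (Lemma~\ref{Lemma: No_overlap}), and then further stratifies $\fInt(n,Q)$ by the dyadic size of $Q_\ell(\Int)$ (the index $k$ in $\fInt_{\ell,k}$). With the threshold containing $Q_\ell(\Int)$ itself, both the per-scale Gaussian exponent and the per-scale entropy scale as $Q_\ell/2^{r\ell}\sim 2^k$, so the comparison becomes uniform in $\ell$, $k$, and $Q$ once $\varepsilon$ is small. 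This adaptive, non-overlapping per-scale energy decomposition is the missing ingredient in your plan; ``refining the enumeration level by level'' along the lines of the 1d Proposition~\ref{prop: entropy argument} is the right instinct, but without $Q_\ell(\Int)$ and its non-overlap property the threshold cannot be calibrated to the entropy at each scale.
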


As in \cite{Affonso_Bissacot_Maia_23}, we use a coarse-graining argument to control the external field. The argument in \cite{Affonso_Bissacot_Maia_23} uses Dudley's entropy bound \cite{Dudley67}, but we opt to follow the original work of \cite{FFS84} for two main reasons: it is the same argument used in the one-dimensional case (when proving Proposition \ref{prop: free energy control}), and it would be clearer to state the adaptations needed to reach the critical exponent $\alpha=3$.

The idea of the coarse-graining argument is to, at each scale $\ell\geq 0$, approximate each interior $\Int_-(\gamma)$ by a region $B_\ell(\Int_-(\gamma))$ close to the region $B_{\ell-1}(\Int_-(\gamma))$, in the sense that $|B_\ell(\Int_-(\gamma))\Delta\\ B_{\ell-1}(\Int_-(\gamma))|$ is small, and we should also control the number of possible approximations
at each step. We take the same approximations as in \cite{FFS84} and $\cite{Affonso_Bissacot_Maia_23}$, described next.

Fixing a parameter $r>4$, at each scale $\ell\geq 0$ we partition $\Z^2$ into cubes $$\mathrm{C}_{\ell}(x)\coloneqq \left(\prod_{i=1}^2{\left[2^{r\ell}x_i , \ 2^{r\ell}(x_i+1) \right)}\right)\cap \Z^2,$$ with $x\in\Z^{2}$. This gives a collection of disjoint cubes partitioning $\Z^2$. As in the one-dimensional case, each such cube will be called an $\ell$-cube, and $\mathrm{C}_{\ell}$ will denote an arbitrary $\ell$-cube. Given any $A\Subset\Z^2$, let $\C_{\ell}(A)$ be the collection of $\ell$-cubes which intersect $A$. We write simply $\C_{\ell}$ to denote an arbitrary collection of $\ell$-cubes. For any $A\subset\Z^2$, we define the collection of \textit{admissible} $\ell$-cubes to be 
\begin{equation}\label{eq: def of admissible cubes}
    \mathfrak{C}_\ell(A) \coloneqq \left\{\mathrm{C}_{\ell} : |\mathrm{C}_{\ell}\cap A| \geq \frac{1}{2}|\mathrm{C}_{\ell}|\right\}.
\end{equation}
Then, at each scale $\ell$, we approximate $\Int_-(\gamma)$ by $B_\ell(\Int_-(\gamma))\coloneqq \bigcup\limits_{\mathrm{C}\in\mathfrak{C}_{\ell}(\Int_-(\gamma))}\hspace{-0.5cm}\mathrm{C}$, the union of admissible $\ell$-cubes. For any $n, Q>0$, let \begin{equation}\label{eq: def of Int n,Q}
    \fInt(n,Q) \coloneqq \{\Int_-(\gamma): \gamma\in\mathcal{C}_0, \ n\leq |\gamma| < 2n \text{ and } Q\leq J(\Int_-(\gamma)) < 2Q\}.
\end{equation} We first give an upper bound on $|B_\ell\left(\fInt(n,Q)\right)| \coloneqq |\{B_\ell(\Int): \Int\in \fInt(n,Q)\}|$. To do so, we estimate the \textit{edge boundary} of admissible cubes, where the edge boundary  between two disjoint collections of $\ell$-cubes $\C_{\ell}$  and $\C_{\ell}^\prime$ is
\begin{equation}\label{eq: def of edge boundary}
    \partial(\C_{\ell},\C_{\ell}^\prime) \coloneqq \{ \{\mathrm{C}_{\ell}, \mathrm{C}^\prime_{\ell}\} : \mathrm{C}_{\ell} \in \C_{\ell}, \ \mathrm{C}_{\ell}^\prime \in \C_{\ell}^\prime \textrm{ and} \  \mathrm{C}_{\ell}^\prime \text{ shares an edge with }\mathrm{C}_{\ell}\}.
\end{equation}
To ease the notation, we will write $\partial\C_{\ell} =\partial(\C_{\ell},\C_{\ell}^c)$, where $\C_{\ell}^c = \C_{\ell}(\Z^2)\setminus \C_{\ell}$. We will also write $(\mathrm{C},\mathrm{C}^\prime)\in\partial \mathfrak{C}_{\ell}(\Int)$ to denote a pair in $\partial \mathfrak{C}_{\ell}(\Int)$ with $\mathrm{C}\in\mathfrak{C}_{\ell}(\Int)$ and $\mathrm{C}^\prime\notin \mathfrak{C}_{\ell}(\Int)$, for any $\Int = \Int_-(\gamma)$ for some $\gamma\in \mathcal C_0$. It is clear that $B_\ell(\Int_-(\gamma))$ is uniquely determined by $\partial \mathfrak{C}_\ell(\Int_-(\gamma))$. The next proposition controls the entropy of the admissible regions. 

\begin{proposition}\cite[Proposition 3.30]{Affonso_Bissacot_Maia_23}\label{Prop: Entropy_admissible_sets}
    Given $n, Q, \ell \geq 0$ and $M_{n, Q, \ell}>0$, let $\widehat{\fInt}(n,Q)$ be the collection of $\Int\in \fInt(n,Q)$ such that 
    \begin{equation}\label{Eq: general_bound_admissible_cubes}
        |\partial \mathfrak{C}_\ell(\Int)| \leq M_{n, Q, \ell}. 
    \end{equation}
    Then, denoting $B_\ell(\widehat{\fInt}(n,Q))\coloneqq \{B_\ell(\Int) : \Int \in \widehat{\fInt}(n,Q)\}$, there exist constants $b_3\coloneqq b_3(\alpha)$ and $\kappa = \kappa(\alpha)$ such that,
    \begin{equation}\label{Eq: Bound_on_boundary_of_admissible_sets}
        |B_\ell\left(\widehat{\fInt}(n, Q)\right)|\leq  \exp{\left\{b_3 \frac{\ell^{\kappa + 1} n}{2^{2r\ell}} + b_3 M_{n,Q,\ell} \right\}}. 
    \end{equation}
\end{proposition}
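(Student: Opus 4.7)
The strategy is to exploit the fact that $B_\ell(\Int)$ is uniquely determined by the edge boundary $\partial \mathfrak{C}_\ell(\Int)$ (since the $\ell$-cubes partition $\mathbb Z^2$ and the admissible ones form a subset whose complement of admissibility is completely encoded by the pairs in $\partial \mathfrak{C}_\ell(\Int)$). Hence it suffices to count the possible boundary configurations subject to the constraints $\Int = \Int_-(\gamma)$ for some external $\gamma \in \mathcal{C}_0$ with $|\gamma| \in [n, 2n)$ and $|\partial \mathfrak{C}_\ell(\Int)| \le M_{n,Q,\ell}$.

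First, I would observe that each pair $(\mathrm{C}, \mathrm{C}') \in \partial \mathfrak{C}_\ell(\Int)$ has $\mathrm{C}$ at least half filled by $\Int_-(\gamma)$ and $\mathrm{C}'$ at most half filled, so both cubes must simultaneously contain points of $\Int_-(\gamma)$ and of $\Int_+(\gamma)\cup V(\gamma)^c$. Since these two regions are separated by $\Sp(\gamma)$, each boundary cube lies within distance $O(2^{r\ell})$ of $\Sp(\gamma)$, which anchors the entire edge boundary to the contour.

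Given this anchoring, I would enumerate $\partial \mathfrak{C}_\ell(\Int)$ in two stages. Stage one counts the anchors: for every connected fragment of the edge boundary I would fix a nearest $\ell$-cube of $\mathfrak{C}_\ell(\Sp(\gamma))$. Since $|\gamma| < 2n$, the number of $\ell$-cubes meeting $\Sp(\gamma)$ is $O(n/2^{2r\ell})$, with an extra polynomial factor $\ell^{\kappa}$ absorbing the multi-scale structure of the $(M,a)$-partition (whose connected components may live at distinct scales governed by \eqref{B_distance_2}, each scale contributing a bounded combinatorial factor). Summed over possible numbers of anchors this yields a bound of $\exp(b_3\,\ell^{\kappa+1} n / 2^{2r\ell})$. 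Stage two, given anchors, enumerates the remaining boundary by a self-avoiding walk type estimate: starting from each anchor cube one explores $\partial \mathfrak{C}_\ell(\Int)$ edge by edge, with only finitely many continuations at each step; since the total length is at most $M_{n,Q,\ell}$, this stage costs a factor $\exp(O(M_{n,Q,\ell}))$.

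The main technical obstacle, which is precisely what \cite[Proposition 3.30]{Affonso_Bissacot_Maia_23} handles, is the multi-scale and disconnected nature of the long-range contour $\gamma$: its connected components interact only through the weak separation \eqref{B_distance_2}, so when the coarse-graining scale $2^{r\ell}$ is comparable to the inter-component distances, boundary fragments can a priori link distant components and the naive count blows up. Controlling this requires a delicate scale-by-scale bookkeeping, producing the extra polylogarithmic factor $\ell^{\kappa+1}$. Since neither this bookkeeping nor the isoperimetric-style counting uses the dimension in an essential way, the three-dimensional proof of \cite{Affonso_Bissacot_Maia_23} adapts verbatim to $d=2$, yielding \eqref{Eq: Bound_on_boundary_of_admissible_sets}.
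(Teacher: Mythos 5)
Your proposal follows essentially the same route as the paper, which does not reprove this proposition but simply observes that the argument of \cite[Proposition 3.30]{Affonso_Bissacot_Maia_23} adapts verbatim to $d=2$, and your sketch (anchoring the boundary cubes to $\Sp(\gamma)$ and then a two-stage anchor-plus-walk enumeration) is a reasonable account of that argument. One small imprecision worth flagging: for a pair $(\mathrm{C},\mathrm{C}')\in\partial\mathfrak{C}_\ell(\Int)$ it is \emph{not} true in general that both cubes individually meet both $\Int_-(\gamma)$ and its complement (e.g.\ $\mathrm{C}$ may lie entirely inside $\Int_-(\gamma)$); the correct observation is that $\mathrm{C}\cup\mathrm{C}'$ is connected and has points on both sides of $\Int_-(\gamma)$, hence contains a lattice edge of $\partial\Int_-(\gamma)$ whose endpoint outside $\Int_-(\gamma)$ lies in $\Sp(\gamma)$, which still gives the anchoring to $\Sp(\gamma)$ you need.
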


The statement of  \cite[Proposition 3.30]{Affonso_Bissacot_Maia_23} is slightly less general since they only consider $M_{n,Q,\ell} = C_1n/2^{r\ell}$ for a suitable constant $C_1>0$, but we can prove Proposition \ref{Prop: Entropy_admissible_sets} by adapting verbatim the arguments in \cite[Proposition 3.30]{Affonso_Bissacot_Maia_23}. In light of Proposition \ref{Prop: Entropy_admissible_sets}, the key remaining task is to bound $|\partial \mathfrak C_\ell(\Int)|$ for $\Int \in \fInt(n, Q)$. For $2<\alpha < 3$, a uniform bound suffices, but the same does not hold in the critical case of $\alpha = 3$, for which a more delicate estimate is required.
The next subsections are dedicated to proving Proposition \ref{prop: goal in 2d}, first for $2<\alpha<3$, then for the critical $\alpha=3$. 

\subsection{Phase transition for $2<\alpha< 3$}\label{sec: 2d not crtitical}

In previous works, the upper bound for $|\partial \mathfrak{C}_\ell(\Int_-(\gamma))|$ is given in terms of $|\gamma|$, see \cite[Proposition 2]{FFS84} and \cite[Proposition 3.17]{Affonso_Bissacot_Maia_23}. Our idea is to give an upper bound for $|\partial \mathfrak{C}_\ell(\Int_-(\gamma))|$ in terms of $J(\Int_-(\gamma))$, taking advantage of the large interaction between pairs in edge boundary $\partial \mathfrak{C}_\ell(\Int_-(\gamma))$.

\begin{proposition}\label{Prop: Prop.1.FFS_alpha_small}
    There exist constants $b_4,b_5$ depending only on $r$ such that, for every $\Int\in\fInt(n,Q)$ 
\begin{equation}\label{Eq: Prop.1.FFS.i_alpha_small}
    |{\partial}\mathfrak{C}_\ell(\Int)| \leq b_4 \frac{Q}{2^{r\ell(4-\alpha)}}, 
\end{equation}
    and 
\begin{equation}\label{Eq: Prop.1.FFS.ii_alpha_small}
    |B_\ell(\Int)\Delta B_{\ell+1}(\Int)| \leq b_5 2^{r\ell(\alpha-2)}Q, 
\end{equation}
for every $\ell\geq 0$.
\end{proposition}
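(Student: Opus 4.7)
The plan is to exploit the fact that each pair of cubes in $\partial \mathfrak{C}_\ell(\Int)$ must contribute a non-trivial amount to the total long-range interaction $J(\Int)$, and then to reduce the symmetric-difference bound to the edge-boundary bound via a connectivity argument across scales.

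For \eqref{Eq: Prop.1.FFS.i_alpha_small}, I would fix any unordered pair $\{\mathrm{C}, \mathrm{C}'\} \in \partial \mathfrak{C}_\ell(\Int)$ with $\mathrm{C}$ admissible and $\mathrm{C}'$ non-admissible. By \eqref{eq: def of admissible cubes} one has $|\mathrm{C} \cap \Int| \geq \tfrac{1}{2} \cdot 2^{2r\ell}$ and $|\mathrm{C}' \cap \Int^c| > \tfrac{1}{2} \cdot 2^{2r\ell}$. Since the two cubes share an edge, any $x \in \mathrm{C}$ and $y \in \mathrm{C}'$ satisfy $|x-y| \leq C_r \cdot 2^{r\ell}$, so $J_{xy} \geq c_r \cdot 2^{-r\ell\alpha}$. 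Multiplying these three lower bounds yields $J(\mathrm{C} \cap \Int, \mathrm{C}' \cap \Int^c) \geq c_r' \cdot 2^{r\ell(4-\alpha)}$. Since each site $x \in \Z^2$ belongs to a unique $\ell$-cube, the sum of these quantities over all pairs $\{\mathrm{C}, \mathrm{C}'\} \in \partial \mathfrak{C}_\ell(\Int)$ double-counts no interaction and is therefore bounded above by $J(\Int) < 2Q$, by \eqref{eq: def of Int n,Q}. Dividing produces \eqref{Eq: Prop.1.FFS.i_alpha_small}.

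For \eqref{Eq: Prop.1.FFS.ii_alpha_small}, the key observation is that an $\ell$-cube in the symmetric difference must sit inside an $(\ell+1)$-cube containing both admissible and non-admissible $\ell$-cubes. Precisely, I would take any $\ell$-cube $\mathrm{C} \in B_\ell(\Int) \Delta B_{\ell+1}(\Int)$ and let $\mathrm{C}^+$ be its enclosing $(\ell+1)$-cube; $\mathrm{C}$ and $\mathrm{C}^+$ have opposite admissibility status, and a quick volume-averaging check shows that $\mathrm{C}^+$ must contain at least one admissible and one non-admissible $\ell$-cube (otherwise, summing $|\mathrm{D} \cap \Int|$ over all $\ell$-subcubes $\mathrm{D} \subset \mathrm{C}^+$ would determine $(\ell+1)$-admissibility consistently with that of $\mathrm{C}$, a contradiction). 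Since the $2^r \times 2^r$ grid of $\ell$-cubes inside $\mathrm{C}^+$ is connected under edge adjacency, each such "mixed" $\mathrm{C}^+$ harbors at least one internal edge of $\partial \mathfrak{C}_\ell(\Int)$, and distinct mixed $\mathrm{C}^+$'s harbor disjoint internal edges. Hence the number of mixed parent cubes is at most $|\partial \mathfrak{C}_\ell(\Int)|$, each contributes at most $|\mathrm{C}^+| = 2^{2r(\ell+1)}$ to the symmetric difference, and applying \eqref{Eq: Prop.1.FFS.i_alpha_small} yields \eqref{Eq: Prop.1.FFS.ii_alpha_small} with $b_5 \leq 2^{2r} b_4$.

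Both steps are essentially bookkeeping; the most delicate part will be the disjointness verifications. In the first step one relies on the fact that each site lies in a unique $\ell$-cube to ensure that different boundary pairs give disjoint $(x,y)$-contributions to $J(\Int)$ — this is what makes the bound scale with $Q$ rather than $|\gamma|$, and thus supplies the gain over \cite{Affonso_Bissacot_Maia_23, FFS84}. In the second step the analogous point is that each mixed $(\ell+1)$-cube must supply its own internal boundary edge, so that the prefactor $b_5$ depends only on $r$ and not on $\ell$.
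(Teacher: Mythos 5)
Your proof is correct. It does essentially match the underlying ideas but takes a slightly different route than the paper: the paper does not prove Proposition~\ref{Prop: Prop.1.FFS_alpha_small} directly, instead deriving it as a corollary (Remark~\ref{rmk: proof_of_weaker_prop}) of the finer Proposition~\ref{Prop: Prop.1.FFS}, which is stated in terms of the single-scale interaction $Q_\ell(\Int)$ and uses the thinned cubes $\widehat{\mathrm{C}}_\ell$ of \eqref{eq: C_hat}. The thinning is there to guarantee that the families of pair-interactions are disjoint \emph{across scales} (Lemma~\ref{Lemma: No_overlap}), which is the key to the critical $\alpha=3$ argument; for the weaker statement involving $Q$ rather than $Q_\ell$ the thinning is unnecessary. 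You exploit exactly this: working with the full cubes $\mathrm{C}, \mathrm{C}'$, you only need disjointness of interaction families \emph{at a fixed scale} $\ell$, which holds for free because the unordered pair of $\ell$-cubes containing $x$ and $y$ is determined by $\{x,y\}$. So your argument is a clean self-contained proof of the $2<\alpha<3$ input, at the cost of not giving the scale-additive refinement $\sum_\ell Q_\ell(\Int) \le J(\Int)$ that the paper reuses for $\alpha=3$. Two small presentational remarks: the phrase ``each site belongs to a unique $\ell$-cube'' is a slightly imprecise justification for no double-counting --- the cleaner statement is that two distinct pairs in $\partial\mathfrak{C}_\ell(\Int)$ cannot produce the same $(x,y)$ since $\{x,y\}$ determines the pair of cubes; and your volume-averaging lemma in part (ii) only really needs the one direction you use (if all $\ell$-subcubes shared $\mathrm{C}$'s admissibility status then $\mathrm{C}^+$ would inherit it, contradicting membership in the symmetric difference), which you do state correctly in the parenthetical.
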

The proof of this proposition is very similar to that of \cite[Proposition 2]{FFS84} and \cite[Proposition 3.17]{Affonso_Bissacot_Maia_23}. Moreover, in the next subsection, we will prove Proposition \ref{Prop: Prop.1.FFS}, which immediately implies Proposition \ref{Prop: Prop.1.FFS_alpha_small} (see Remark \ref{rmk: proof_of_weaker_prop}). As a result, we omit the proof of Proposition \ref{Prop: Prop.1.FFS_alpha_small}.

Using Propositions \ref{Prop: Entropy_admissible_sets} and \ref{Prop: Prop.1.FFS_alpha_small} one can prove Proposition \ref{prop: goal in 2d} following the arguments of \cite{Affonso_Bissacot_Maia_23}, with some minor modifications. We present a short alternative proof here, with the hope that an interested reader can see all the required adaptations to treat the case $\alpha = 3$.

\begin{proof}[\textbf{Proof of Proposition~\ref{prop: goal in 2d} for $2<\alpha<3$. }]
If $J(\Int_-(\gamma))<2$, then we get that $\Int_-(\gamma)=\emptyset$ and thus $\Delta_{\Int_-(\gamma)}=0$. Hence in order to upper-bound $\mathbb{P}\left(\mathcal{E}^c \right)$, it suffices  to consider $\Int\in\fInt(n,Q)$ for $n\ge1$ and $ Q\geq 2$.
A simple union bound yields

\begin{align}\label{eq: union_bound_bad_event}
    &\mathbb{P}\left(\mathcal{E}^c \right) = \mathbb{P}\left(\sup_{\substack{\gamma\in\mathcal{C}_0}} \frac{\Delta_{\Int_-(\gamma)}(h)}{|\gamma| +  J({\Int_-(\gamma)})} > \frac{b_1}{4}\right) 
    \leq \sum_{n\ge 1, Q\geq 2}\mathbb{P}\left(\sup_{\substack{\Int\in\fInt(n,Q)}} {\Delta_{\Int}(h)}> 
 {\frac{b_1}{4}(n + Q)} \right). 
\end{align}
For any $L\geq 0$ we can write  
\begin{equation}\label{Eq: Telescopic_Delta_1}
    \Delta_{\Int}(h) = \sum_{\ell = 0}^{L} \left( \Delta_{B_\ell(\Int)}(h) - \Delta_{B_{\ell+1}(\Int)}(h) \right) + \Delta_{B_{L+1}(\Int)}(h),
\end{equation}
with the convention that $\Delta_\emptyset(h)=0$. Write $\Delta(\ell, \Int,h)=\Delta_{B_\ell(\Int)}(h) - \Delta_{B_{\ell+1}(\Int)}(h)$ for notational convenience. By \eqref{Eq: Prop.1.FFS.i_alpha_small}, there is no admissible $\ell$-cube when $2^{r\ell(4-\alpha)}>2b_4Q$. Thus, taking  $L_Q \coloneqq \ceil{\frac{\log_{2}(2b_4Q)}{r(4-\alpha)}}$ we have $B_{L_Q+1}(\fInt(n,Q)) = \{\emptyset\}$, so the last term on the right hand side of \eqref{Eq: Telescopic_Delta_1} vanishes. Let $g_{\ell}=\min\{\ell+1,L_Q-\ell+1\}$. As $\frac{b_1 }{4}(n + Q)\geq \frac{b_1}{40}\sum_{\ell = 0}^{L_Q}(\frac{n+Q}{g_{\ell}^2})$, we can bound 
\begin{align}
\begin{split}\label{Eq: Union_coarse_graining}
    &\mathbb{P}\left(\sup_{\substack{\Int\in\fInt(n,Q)}} {\Delta_{\Int}(h)}> 
 {\frac{b_1}{4}(n + Q)}\right)  \leq \sum_{\ell = 0}^{L_Q}\mathbb{P}\left(\sup_{\substack{\Int\in\fInt(n,Q)}} |\Delta(\ell, \Int,h)|> 
 \frac{b_1(n+Q)}{40g_{\ell}^2} \right).
\end{split}
\end{align}

Using Lemma~\ref{Lemma: Concentration.for.Delta.General}, \eqref{Eq: Prop.1.FFS.ii_alpha_small} and a union bound, we have 
\begin{align*}
&\mathbb{P}\left(\sup_{\substack{\Int\in\fInt(n,Q)}}|\Delta(\ell, \Int,h)|> {\frac{b_1 }{40}\left(\frac{n+Q}{g_{\ell}^2}\right)}\right) \\
  \leq~& |B_{\ell}(\fInt(n, Q))|\cdot |B_{\ell+1}(\fInt(n, Q))| \cdot\exp{\left\{-\frac{C_1[(n + Q)g_\ell^{-2}]^2}{\varepsilon^2 2^{r\ell(\alpha-2)} Q}\right\}}\\
 \leq~& |B_{\ell}(\fInt(n, Q))|\cdot |B_{\ell+1}(\fInt(n, Q))|\cdot\exp{\left\{ -\frac{C_1}{\varepsilon^22^{r\ell(\alpha-2)}} \left(\frac{n+Q}{g_\ell^4}\right) \right\}}.
\end{align*}
Combined with Proposition \ref{Prop: Entropy_admissible_sets} applied to $M_{n, Q, \ell} = 2b_4\frac{Q}{2^{r\ell(4-\alpha)}}$ and $M_{n, Q, \ell+1} = 2b_4\frac{Q}{2^{r(\ell+1)(4-\alpha)}}$, it yields that 
\begin{align}
&\mathbb{P}\left(\sup_{\substack{\Int\in\fInt(n,Q)}}|\Delta(\ell, \Int,h)|> {\frac{b_1 }{40}\left(\frac{n+Q}{g_{\ell}^2}\right)}\right) \nonumber\\
\leq~& \exp{\left\{C_2\frac{(\ell+1)^{\kappa + 1}n}{2^{2r\ell}} + C_2\frac{Q}{2^{r\ell(4-\alpha)}}\right\}}\nonumber\cdot\exp{\left\{ -\frac{C_1}{\varepsilon^22^{r\ell(\alpha-2)}} \left(\frac{n+Q}{g_\ell^4}\right) \right\}}\nonumber \\
\leq~& \exp{\left\{ -\frac{C_3n}{\varepsilon^22^{r\ell(\alpha - 2)}g_\ell^4} - \frac{C_3Q}{\varepsilon^22^{r\ell(\alpha - 2)}g_\ell^4}  \right\}}.\label{eq: ell level free energy diff bound}
\end{align}
In the last inequality, we used the fact that $\frac{2^{2r\ell}}{(\ell+1)^{\kappa+1}}\ge C 2^{r\ell(\alpha-2)}g_\ell^4$ and $2^{r\ell(4-\alpha)}\geq C 2^{r\ell(\alpha-2)}g_\ell^4$  for some constant $C>0$ since $\alpha<3$ and thus we could choose $\varepsilon>0$ small enough such that $$\frac{C_1}{\varepsilon^22^{r\ell(\alpha-2)}g_\ell^4}-\frac{C_2(\ell+1)^{\kappa + 1}}{2^{2r\ell}}\ge \frac{C_3}{\varepsilon^22^{r\ell(\alpha - 2)}g_\ell^4}~\mbox{ and }~\frac{C_1}{\varepsilon^22^{r\ell(\alpha-2)}g_\ell^4}-\frac{C_2}{2^{r\ell(4-\alpha)}}\ge \frac{C_3}{\varepsilon^22^{r\ell(\alpha - 2)}g_\ell^4}.$$

The result follows now from some straightforward computations. 
Since $\alpha>2$, we get that $2^{r(\alpha-2)(L_Q-\ell+1)}>Cg_\ell^4 $ for some $C>0$ depending only on $\alpha$. Thus, by \eqref{Eq: Union_coarse_graining} and \eqref{eq: ell level free energy diff bound}, we get that
\begin{align}
    \mathbb{P}\left(\mathcal{E}^c \right)&\leq \sum_{Q\geq 2}\sum_{n\geq 1}\sum_{\ell = 0}^{L_Q} \exp\left\{ -\frac{C_3n}{\varepsilon^22^{rL_Q(\alpha - 2)}g_\ell^4}- C_3\frac{Q}{\varepsilon^22^{r\ell(\alpha - 2)}g_\ell^4}  \right\} \nonumber \\
    &\leq  \sum_{Q\geq 2}\sum_{n\geq 1}\exp{\left\{- \frac{C_3n}{\varepsilon^2(L_Q+1)^42^{rL_Q(\alpha - 2)}}\right\}}\sum_{\ell = 0}^{L_Q}\exp\left\{- {C_4}\frac{Q}{\varepsilon^22^{rL_Q(\alpha - 2)}}  \right\}.
    \label{eq: bad event final calculation small}
\end{align} Recalling that  $L_Q = \left\lceil\frac{\log_{2}(2b_4Q)}{r(4-\alpha)}\right\rceil$, we get that $2^{rL_Q(4-\alpha)}\leq 2^{r(4-\alpha)}\cdot 2b_4Q$. Thus, we have 
\begin{align}
    &\sum_{\ell=0}^{L_Q}\exp\left\{- {C_4}\frac{Q}{\varepsilon^22^{rL_Q(\alpha - 2)}}  \right\} \leq  (L_Q + 1)\exp{\left\{- \frac{C_5}{\varepsilon^2}Q^{1 - \frac{\alpha - 2}{4-\alpha}}\right\}}\nonumber \\ 
    &\hspace{3cm}\leq  2\left\lceil\frac{\log_{2}(2b_4Q)}{r(4-\alpha)}\right\rceil\exp{\left\{- \frac{C_5}{\varepsilon^2}Q^{\frac{6-2\alpha}{4-\alpha}}\right\}} \leq C_6^{-1}\exp{\left\{- \frac{C_6}{\varepsilon^2}Q^{\frac{6-2\alpha}{4-\alpha}}\right\}}.\label{eq: bad event final calculation small 1}
\end{align}
In addition, since $2<\alpha<3$ and using again that $2^{rL_Q}\leq 2^r(2b_4)^{\frac{1}{4-\alpha}} Q^{\frac{1}{4-\alpha}}$, we can upper-bound $(L_Q+1)^42^{r(\alpha - 2)L_Q}\leq C_7 Q$ for some constant $C_7>0$ depending only on $r$ and $\alpha$. Thus we get that \begin{equation}\label{eq: bad event final calculation small 3}
    \sum_{n\geq 1}\exp{\left\{- \frac{C_3n}{\varepsilon^2(L_Q+1)^42^{rL_Q(\alpha - 2)}}\right\}}\le C_8(L_Q+1)^42^{rL_Q(\alpha - 2)}\le C_9 Q.
\end{equation}  Plugging \eqref{eq: bad event final calculation small 1} and \eqref{eq: bad event final calculation small 3} into \eqref{eq: bad event final calculation small}, we get that
\begin{align*}
\mathbb{P}\left(\mathcal{E}^c \right)\leq  \sum_{Q\geq 2}C_9Q\cdot C_6^{-1} \exp\left\{- \frac{C_6 Q^{\frac{6-2\alpha}{4-\alpha}}}{\varepsilon^2}\right\}.
\end{align*}
The result follows by taking $\varepsilon^2$ sufficiently small. 
\end{proof}

\subsection{Phase Transition for $\alpha = 3$}\label{sec: 2d crtitical}

To reach the critical $\alpha=3$ we want to take advantage of interactions in each level. For any edge boundary $(\mathrm{C},\mathrm{C}^\prime)$, we start by considering interactions restricted to smaller cubes inside $\mathrm{C}$ and $\mathrm{C}^\prime$. Given any $\mathrm{C}_{\ell}(x)$, let 
\begin{equation}\label{eq: C_hat}
    \widehat{\mathrm{C}}_{\ell}(x) \coloneqq \left(\prod_{i=1}^2{\left[2^{r\ell}x_i +2^{r(\ell-1)}, \ 2^{r\ell}(x_i+1) - 2^{r(\ell - 1)}\right)}\right)\cap \Z^2 
\end{equation} 
be obtained from the cube $\mathrm{C}_{\ell}(x)$ after we remove an outer layer with thickness $2^{r(\ell - 1)}$, see Figure \ref{Fig: hat_C}. For an arbitrary $\ell$-cube $\mathrm{C}_{\ell}$, we denote ${\widehat{\mathrm{C}}_{\ell}}$ the cube of the form \eqref{eq: C_hat} sharing the same center as $\mathrm{C}_{\ell}$.  

\begin{lemma}\label{Lemma: Large_int}
    There exists a constant $b_6\coloneqq b_6(\alpha)$ such that, for any $A\Subset\Z^2$ and $(\mathrm{C},\mathrm{C}^\prime)\in \partial \mathfrak{C}_{\ell}(A)$ with $\ell\geq 1$,
    \begin{equation*}
        J(A\cap \widehat{\mathrm{C}}, A^c\cap \widehat{\mathrm{C}}^\prime) \geq b_6 2^{r\ell(4-\alpha)}.
    \end{equation*}
\end{lemma}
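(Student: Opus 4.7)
The plan is to obtain the bound by finding a large subset of $A \cap \widehat{\mathrm{C}}$ and a large subset of $A^c \cap \widehat{\mathrm{C}}^\prime$, and then use the fact that any two such points are at distance of order $2^{r\ell}$, so that the pointwise interaction is bounded below by a constant times $2^{-r\ell\alpha}$.

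First I would use the definition of admissibility: since $\mathrm{C}\in \mathfrak{C}_{\ell}(A)$, we have $|A\cap \mathrm{C}|\geq \tfrac{1}{2}|\mathrm{C}|=\tfrac{1}{2}\cdot 2^{2r\ell}$, and since $\mathrm{C}^\prime\notin \mathfrak{C}_{\ell}(A)$, we have $|A^c\cap \mathrm{C}^\prime|>\tfrac{1}{2}\cdot 2^{2r\ell}$. Passing from $\mathrm{C}$ to $\widehat{\mathrm{C}}$ costs at most the volume of the discarded outer layer, which is
\begin{equation*}
|\mathrm{C}\setminus \widehat{\mathrm{C}}|=2^{2r\ell}-(2^{r\ell}-2\cdot 2^{r(\ell-1)})^2\leq 4\cdot 2^{2r\ell-r}.
\end{equation*}
Because $r>4$, this is at most $\tfrac{1}{4}|\mathrm{C}|$, so we get
\begin{equation*}
|A\cap \widehat{\mathrm{C}}|\geq \tfrac{1}{2}|\mathrm{C}|-\tfrac{1}{4}|\mathrm{C}|=\tfrac{1}{4}\cdot 2^{2r\ell},
\end{equation*}
and, by the same estimate applied to $\mathrm{C}^\prime$, $|A^c\cap \widehat{\mathrm{C}}^\prime|\geq \tfrac{1}{4}\cdot 2^{2r\ell}$.

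Next, I would upper-bound the distances. Since $\mathrm{C}$ and $\mathrm{C}^\prime$ share an edge, their union is contained in a rectangle of side lengths $2^{r\ell}$ and $2\cdot 2^{r\ell}$, so for any $x\in \widehat{\mathrm{C}}$ and $y\in \widehat{\mathrm{C}}^\prime$ one has $|x-y|\leq \sqrt{5}\cdot 2^{r\ell}$. Consequently, $J_{xy}=|x-y|^{-\alpha}\geq 5^{-\alpha/2}\cdot 2^{-r\ell\alpha}$ uniformly for such pairs.

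Combining these two ingredients, the sum defining $J(A\cap \widehat{\mathrm{C}},A^c\cap \widehat{\mathrm{C}}^\prime)$ contains at least $|A\cap \widehat{\mathrm{C}}|\cdot |A^c\cap \widehat{\mathrm{C}}^\prime|\geq \tfrac{1}{16}\cdot 2^{4r\ell}$ terms, each at least $5^{-\alpha/2}\cdot 2^{-r\ell\alpha}$, which yields the desired bound with $b_6=\tfrac{1}{16}\cdot 5^{-\alpha/2}$. There is no real obstacle here; the only point to watch is that the choice of thickness $2^{r(\ell-1)}$ for the outer shell is tuned to be large enough to make the later coarse-graining estimates work (this will matter in subsequent proofs), while still small enough relative to $|\mathrm{C}|$ when $r>4$ to retain a constant fraction of $A\cap \mathrm{C}$ and $A^c\cap \mathrm{C}^\prime$.
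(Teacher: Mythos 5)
Your proof is correct and follows the same approach as the paper: bound the intersection sizes $|A\cap\widehat{\mathrm{C}}|$ and $|A^c\cap\widehat{\mathrm{C}}^\prime|$ from below via admissibility and the small outer-layer volume (using $r>4$), then bound $J_{xy}$ from below uniformly over pairs by the diameter of $\mathrm{C}\cup\mathrm{C}^\prime$. The only difference is the numerical constant in the diameter bound (you use $\sqrt{5}\cdot 2^{r\ell}$ where the paper uses the safer $3\cdot 2^{r\ell}$), which is immaterial.
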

\begin{proof}
    For any $x\in \widehat{\mathrm{C}}$ and $y\in {\widehat{\mathrm{C}}^\prime}$, we have $J_{xy}\ge (3\cdot 2^{r\ell})^{-\alpha}$. Thus we can bound
    \begin{equation}\label{Eq: J_hat_C_aux}
           J(A\cap \widehat{\mathrm{C}}, A^c\cap {\widehat{\mathrm{C}}^\prime}) \geq \frac{1}{3^{\alpha}}|A\cap \widehat{\mathrm{C}}||A^c\cap {\widehat{\mathrm{C}}^\prime}|2^{-r\ell\alpha}.
    \end{equation}
    Moreover, we have $|A\cap \widehat{\mathrm{C}}| \geq |A\cap \mathrm{C}| - |\mathrm{C}\setminus \widehat{\mathrm{C}}| \geq 2^{2r\ell}(\frac{1}{2} - \frac{4}{2^r})$, where the second inequality follows from the admissibility condition as in \eqref{eq: def of admissible cubes}. Recalling that $r> 4$, we have $\frac{1}{2} - \frac{4}{2^r} \geq \frac{1}{4}$ and thus $|A\cap \widehat{\mathrm{C}}|\geq 2^{2r\ell - 2}$. An analogous argument shows that $|A^c\cap {\widehat{\mathrm{C}}^\prime}|\geq 2^{2r\ell - 2}$, and therefore  \eqref{Eq: J_hat_C_aux} yields
    \begin{equation*}
            J(A\cap \widehat{\mathrm{C}}, A^c\cap {\widehat{\mathrm{C}}^\prime}) \geq \frac{1}{16\cdot 3^\alpha}2^{r\ell(4-\alpha)},
    \end{equation*}
    proving the lemma for $b_6 = \frac{1}{16\cdot 3^\alpha}$.
\end{proof}

The key advantage of considering smaller boxes $\widehat{\mathrm{C}}$ is that their interactions do not overlap in the coarse-graining approximations. This is demonstrated in the next lemma.

\begin{lemma}\label{Lemma: No_overlap}
    For every $\ell\geq 0$ and $A\Subset \Z^2$, defining 
    \begin{equation}\label{Eq: Def_Q_ell}
        Q_\ell(A) \coloneqq \sum_{(\mathrm{C},\mathrm{C}^\prime)\in \partial\mathfrak{C}_\ell(A)}J(A\cap \widehat{\mathrm{C}}, A^c\cap {\widehat{\mathrm{C}}^\prime}),
    \end{equation}
    we have
    \begin{equation*}
        \sum_{\ell\geq 0}Q_\ell(A)\leq J(A).
    \end{equation*}
\end{lemma}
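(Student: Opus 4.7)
The strategy is to interchange the summation order and show that each pair $(x,y) \in A \times A^c$ contributes to $\sum_\ell Q_\ell(A)$ at most once. Unfolding the definition of $Q_\ell$ and of $J(\cdot, \cdot)$, we write
$$\sum_{\ell \geq 0} Q_\ell(A) \;=\; \sum_{x \in A,\, y \in A^c} J_{xy}\, N(x,y),$$
where $N(x,y)$ counts the number of levels $\ell \geq 0$ admitting an edge-adjacent pair $(\mathrm{C},\mathrm{C}') \in \partial\mathfrak{C}_\ell(A)$ with $x \in \widehat{\mathrm{C}}$ and $y \in \widehat{\mathrm{C}}'$. Since the $\ell$-cubes partition $\Z^2$, at each level the only candidates for $\mathrm{C}$ and $\mathrm{C}'$ are the unique $\ell$-cubes containing $x$ and $y$ respectively, so $N(x,y)$ really is the count of contributing levels. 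Because $J(A) = \sum_{x \in A,\, y \in A^c} J_{xy}$, the proposition reduces to verifying $N(x,y) \leq 1$ for all such $x,y$.

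To establish $N(x,y) \leq 1$ I would use a direct geometric computation. Suppose the pair contributes at some level $\ell$. Then the two $\ell$-cubes containing $x$ and $y$ share an edge, which, without loss of generality, is perpendicular to the first coordinate axis, so they are of the form $\mathrm{C}_\ell(n,m)$ and $\mathrm{C}_\ell(n+1,m)$. Using the explicit description
$$\widehat{\mathrm{C}}_\ell(n,m) \;=\; \bigl[2^{r\ell}n + 2^{r(\ell-1)},\; 2^{r\ell}(n+1) - 2^{r(\ell-1)}\bigr) \times \bigl[2^{r\ell}m + 2^{r(\ell-1)},\; 2^{r\ell}(m+1) - 2^{r(\ell-1)}\bigr),$$
the memberships $x \in \widehat{\mathrm{C}}_\ell(n,m)$ and $y \in \widehat{\mathrm{C}}_\ell(n+1,m)$ give the strict inequalities
$$2^{r(\ell-1)+1} \;<\; |x_1 - y_1| \;\leq\; \|x - y\|_\infty \;<\; 2^{r\ell + 1},$$
where the upper bound follows because $|x_1-y_1|$ is bounded by $2 \cdot 2^{r\ell} - 2 \cdot 2^{r(\ell-1)} < 2^{r\ell+1}$, and $|x_2-y_2| < 2^{r\ell}$ since $x_2$ and $y_2$ lie in the same second-coordinate strip.

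Now, if $(x,y)$ were also to contribute at a strictly larger level $\ell' > \ell$, the same argument applied at level $\ell'$ would force $\|x - y\|_\infty > 2^{r(\ell'-1)+1} \geq 2^{r\ell + 1}$, directly contradicting the upper bound obtained at level $\ell$. Hence $N(x,y) \leq 1$, and summing in $J_{xy}$ gives the desired bound $\sum_\ell Q_\ell(A) \leq J(A)$. The only substantive point is the strictness of the inequality $\|x - y\|_\infty > 2^{r(\ell-1)+1}$, which is exactly what the inner-shrinking of $\mathrm{C}$ to $\widehat{\mathrm{C}}$ by a margin $2^{r(\ell-1)}$ was designed to guarantee; given this, the argument is pure bookkeeping and I do not foresee any serious obstacle.
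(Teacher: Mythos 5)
Your proof is correct and rests on the same geometric observation as the paper's: the inward margin of width $2^{r(\ell-1)}$ in the definition of $\widehat{\mathrm{C}}_\ell$ prevents a pair $(x,y)$ from contributing at two distinct scales. The paper phrases this as a set-containment statement ($\widehat{\mathrm{C}}_k \cap \mathrm{C}_\ell = \emptyset$ for $k>\ell$) whereas you phrase it as incompatible two-sided metric bounds on $\|x-y\|_\infty$, but the underlying argument and the explicit use of the margin are the same.
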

\begin{proof}
    Given any $(\mathrm{C}_{\ell}, \mathrm{C}_{\ell}^\prime)\in \partial \mathfrak{C}_{\ell}(A)$, it is enough to show that no interaction between ${\widehat{\mathrm{C}}_{\ell}}$ and $\widehat{\mathrm{C}}^\prime_{\ell}$ appears in $Q_k(A)$ for all $k>\ell$. An interaction $J_{xy}$ with $x\in A\cap {\widehat{\mathrm{C}}_{\ell}}$ and $y\in A^c\cap {\widehat{\mathrm{C}}^\prime_{\ell}}$ appears in $Q_k(A)$ only if there exists $(\mathrm{C}_{k}, \mathrm{C}^\prime_{k})\in \partial \mathfrak{C}_k(A)$ with $ \mathrm{C}_{\ell}\subset \mathrm{C}_{k}$ and $ \mathrm{C}^\prime_{\ell}\subset \mathrm{C}^\prime_{k}$. As both pairs $\{\mathrm{C}_{\ell},  \mathrm{C}^\prime_{\ell}\}$ and $\{ \mathrm{C}_{k},  \mathrm{C}^\prime_{k}\}$ share an edge, one of the edges of $\mathrm{C}_{\ell}$ must be contained in an edge of $\mathrm{C}_{k}$, and the same holds for $\mathrm{C}^\prime_{\ell}$ and $\mathrm{C}^\prime_{k}$, see Figure~\ref{Fig: hat_C} for an illustration with $k = \ell+1$. As $2^{r(k-1)}\geq 2^{r\ell}$, we must have $\widehat{\mathrm{C}}_{k}\cap \mathrm{C}_{\ell}= \emptyset$ and similarly $\widehat{\mathrm{C}}^\prime_{k}\cap \mathrm{C}^\prime_{\ell}= \emptyset$.  
\end{proof}

\begin{figure}
    \centering
    \input{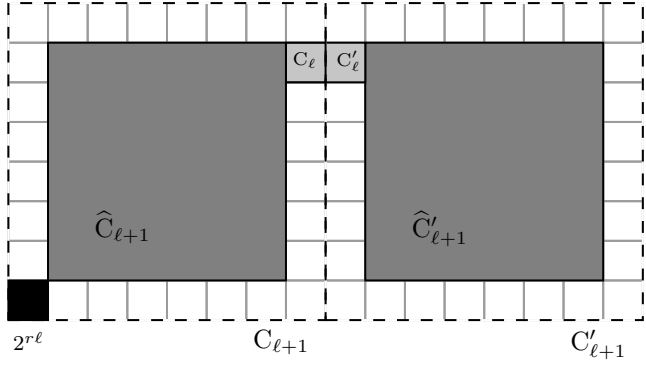}
    \caption{In this picture, we take $r=3$. The dashed lines delimit two $(\ell+1)$-cubes, $\mathrm{C}_{\ell+1}$ and $\mathrm{C}^\prime_{\ell+1}$, while the dark gray areas are $\widehat{\mathrm{C}}_{\ell+1}$ and $\widehat{\mathrm{C}}_{\ell+1}^\prime$. The black cube represents one of the $\ell$-cubes inside $\mathrm{C}_{\ell+1}$, and the light gray cubes are two neighboring $\ell$-cubes $\mathrm{C}_\ell\subset \mathrm{C}_{\ell+1}$ and $\mathrm{C}_\ell^\prime\subset\mathrm{C}_{\ell+1}^\prime$.}
    \label{Fig: hat_C}
\end{figure}

The next proposition is a modification of \cite[Proposition 1]{FFS84} and \cite[Proposition 3.16]{Affonso_Bissacot_Maia_23}, which is also a generalization of Proposition~\ref{Prop: Prop.1.FFS_alpha_small}. 
\begin{proposition}\label{Prop: Prop.1.FFS}
    There exists a  constant $b_7$ depending only on $r$ such that, for any $A\Subset \Z^2$ and $\ell \geq 0$, we have
\begin{equation}\label{Eq: Prop.1.FFS.i}
    |{\partial}\mathfrak{C}_\ell(A)| \leq  \frac{Q_\ell(A)}{b_62^{r\ell(4-\alpha)}}, 
\end{equation}
    and 
\begin{equation}\label{Eq: Prop.1.FFS.ii}
    |B_\ell(A)\Delta B_{\ell+1}(A)| \leq b_7 2^{r\ell(\alpha-2)} Q_\ell(A). 
\end{equation}
\end{proposition}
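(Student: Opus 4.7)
The plan is to derive both bounds directly from Lemma~\ref{Lemma: Large_int} together with a simple combinatorial comparison between the $\ell$-th and $(\ell+1)$-th coarse-grainings; no isoperimetric tools are needed.

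Inequality \eqref{Eq: Prop.1.FFS.i} is essentially immediate. By the definition of $Q_\ell(A)$ in \eqref{Eq: Def_Q_ell}, each pair $(\mathrm{C},\mathrm{C}')\in\partial\mathfrak{C}_\ell(A)$ contributes one term $J(A\cap\widehat{\mathrm{C}},A^c\cap\widehat{\mathrm{C}}')$ to the sum, and Lemma~\ref{Lemma: Large_int} lower-bounds each such term by $b_6\,2^{r\ell(4-\alpha)}$. Summing over boundary pairs gives $Q_\ell(A)\geq b_6\,2^{r\ell(4-\alpha)}|\partial\mathfrak{C}_\ell(A)|$, which is \eqref{Eq: Prop.1.FFS.i} with $b_7 = b_6^{-1}$.

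For \eqref{Eq: Prop.1.FFS.ii}, I will first bound the number $N_\ell$ of $\ell$-cubes contained in $B_\ell(A)\Delta B_{\ell+1}(A)$; this symmetric difference is indeed a union of $\ell$-cubes, because each $(\ell+1)$-cube decomposes into $2^{2r}$ $\ell$-subcubes. Let $\mathcal{S}$ be the collection of $(\ell+1)$-cubes containing at least one $\ell$-cube of $B_\ell(A)\Delta B_{\ell+1}(A)$, so that $N_\ell\leq 2^{2r}|\mathcal{S}|$. The key claim is that every $\mathrm{C}_{\ell+1}\in\mathcal{S}$ must contain both an admissible and a non-admissible $\ell$-subcube: one direction is built into the definition of the symmetric difference, and the reverse follows from a simple volume count. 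Namely, if all $\ell$-subcubes of $\mathrm{C}_{\ell+1}$ were non-admissible then $|A\cap \mathrm{C}_{\ell+1}|<2^{2r}\cdot 2^{2r\ell-1}=2^{2r(\ell+1)-1}$, forcing $\mathrm{C}_{\ell+1}$ itself to be non-admissible; the symmetric statement rules out the opposite scenario. Since the $\ell$-subcubes of $\mathrm{C}_{\ell+1}$ form a connected $2^r\times 2^r$ grid, the presence of both types produces at least one adjacent pair of opposite admissibility—i.e., a pair in $\partial\mathfrak{C}_\ell(A)$ lying entirely inside $\mathrm{C}_{\ell+1}$. These interior pairs are disjoint across distinct $\mathrm{C}_{\ell+1}\in\mathcal{S}$, so $|\mathcal{S}|\leq|\partial\mathfrak{C}_\ell(A)|$. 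Combining with \eqref{Eq: Prop.1.FFS.i} yields
\[
|B_\ell(A)\Delta B_{\ell+1}(A)| = 2^{2r\ell}N_\ell \leq 2^{2r(\ell+1)}|\partial\mathfrak{C}_\ell(A)| \leq \frac{2^{2r}}{b_6}\,2^{r\ell(\alpha-2)}\,Q_\ell(A),
\]
which is \eqref{Eq: Prop.1.FFS.ii}.

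The only substantive step is the combinatorial claim above, and once phrased correctly it reduces to the one-line volume inequality. The $\ell=0$ case is not formally covered by Lemma~\ref{Lemma: Large_int} as stated; however, adopting the convention $\widehat{\mathrm{C}}_0=\mathrm{C}_0$ makes that lemma hold trivially for $\ell=0$ (each nearest-neighbor pair has $J_{xy}=1 = 2^{0\cdot(4-\alpha)}$), so the full argument goes through uniformly for all $\ell\geq 0$.
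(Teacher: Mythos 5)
Your proof is correct and follows essentially the same route as the paper: part \eqref{Eq: Prop.1.FFS.i} is identical, and part \eqref{Eq: Prop.1.FFS.ii} rests on the same key observation (every $(\ell+1)$-cube meeting $B_\ell(A)\Delta B_{\ell+1}(A)$ contains, among its $\ell$-subcubes, an adjacent pair of opposite admissibility and hence a pair in $\partial\mathfrak{C}_\ell(A)$), except that you count straddling $(\ell+1)$-cubes and then invoke \eqref{Eq: Prop.1.FFS.i}, whereas the paper bounds each such cube's volume directly by the interaction of the boundary pair inside it and sums. Two points you make explicit that the paper treats tersely are worth keeping: the volume-count showing a straddling $(\ell+1)$-cube must contain $\ell$-subcubes of both kinds (so connectivity of the subcube grid yields an interior boundary pair), and the convention $\widehat{\mathrm{C}}_0 = \mathrm{C}_0$, which is needed to make Lemma~\ref{Lemma: Large_int} and the proposition sensible at $\ell = 0$ and under which the lemma is trivially true there.
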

\begin{remark}\label{rmk: proof_of_weaker_prop}
    By the definition of $Q_\ell(A)$ we can directly bound $Q_\ell(A)\leq J(A)$, so Proposition \ref{Prop: Prop.1.FFS_alpha_small} follows immediately from Proposition \ref{Prop: Prop.1.FFS}.
\end{remark}

\begin{proof}[\textbf{Proof of Proposition~\ref{Prop: Prop.1.FFS}}]
   For \eqref{Eq: Prop.1.FFS.i}, combining Lemma \ref{Lemma: Large_int} with \eqref{Eq: Def_Q_ell} yields that
   \begin{equation*}
       Q_\ell(A)\geq b_62^{4-\alpha}|\partial\mathfrak{C}_{\ell}(A)|.
   \end{equation*}

    We now turn to the proof of \eqref{Eq: Prop.1.FFS.ii}. Given $\mathrm{C}\in \mathscr{C}_{\ell+1}(B_{\ell+1}(A)\setminus B_{\ell}(A))$, there exists a pair $\{\mathrm{C}_{\ell}, \mathrm{C}^\prime_{\ell}\}\in\partial \mathfrak{C}_\ell(A)$ with $\mathrm{C}_{\ell}\cup \mathrm{C}^\prime_{\ell}\subset \mathrm{C}$. By Lemma~\ref{Lemma: Large_int} we have
\begin{align*}\label{Eq: bound.on.c.bar}
    |\left(B_{\ell+1}(A)\setminus B_\ell(A)\right)\cap \mathrm{C}| &\leq |\mathrm{C}| = 2^{2r(\ell+1)} =2^{2r}2^{r\ell(\alpha - 2)}2^{r\ell(4-\alpha)} \\
    &\leq  b_6^{-1} 2^{2r}2^{r\ell(\alpha - 2)} J(A\cap \widehat{\mathrm{C}}_\ell, A^c\cap \widehat{\mathrm{C}}^\prime_\ell).
\end{align*}
Therefore, as all cubes  $\mathrm{C}\in \mathscr{C}_{\ell+1}(B_{\ell+1}(A)\setminus B_{\ell}(A))$ are disjoint, 
\begin{align*}
    |B_{\ell+1}(A)\setminus B_\ell(A)| &= \sum_{\mathrm{C}\in \mathscr{C}_{\ell+1}(B_{\ell+1}(A)\setminus B_\ell(A))} |(B_{\ell+1}(A)\setminus B_\ell(A))\cap \mathrm{C}| \\
    &\leq\sum_{\mathrm{C}\in \mathscr{C}_{\ell+1}(B_{\ell+1}(A)\setminus B_\ell(A))}  b_6^{-1} 2^{2r}2^{r\ell(\alpha - 2)} J(A\cap \widehat{\mathrm{C}}_\ell, A^c\cap \widehat{\mathrm{C}}_\ell^\prime)\\& \leq b_6^{-1}2^{2r}2^{r\ell(\alpha - 2)} Q_\ell(A). 
\end{align*}
To get the same bound for $|B_{\ell}(A)\setminus B_{\ell+1}(A)|$ we use a similar argument, covering $B_{\ell}(A)\setminus B_{\ell+1}(A)$ with ${(\ell+1)}$-cubes {and considering the edge boundaries in those $(\ell+1)$-cubes}. This concludes the proof taking $b_7 =  b_6^{-1} 2^{2r + 1}$.
\end{proof}

\begin{remark}
Notice that the proof of \eqref{Eq: Prop.1.FFS.ii} also works when $|B_{\ell+1}(A)\Delta B_\ell(A)| = |B_\ell(A)|$, that is, when $\ell$ is the last scale with non-empty admissible cubes. In this case, \eqref{Eq: Prop.1.FFS.ii} actually bounds $|B_\ell(A)|$.
\end{remark}

For $\alpha=3$, we can prove a stronger version of Lemma~\ref{Lemma: interacton_1} in two dimensions. Let us recall the following definition from the previous sections. Given $A\subset \Z^2$ and a configuration $\sigma\in\Omega$, $A^+(\sigma)\coloneqq \{x\in A : \sigma_x=1\}$ is the collection of plus spins in $A$, and similarly $A^-(\sigma)$ is the collection of minus spins in $A$. We start with an auxiliary lemma.
\begin{lemma}\label{lem: isoperimetric inequality for cubes}
    For any $0\leq\ell<L$, let $\C, \C^\prime$ be two disjoint collections of $\ell$-cubes such that $\cup_{\mathrm{C}\in \C\cup\C^\prime}\mathrm{C}$ is an $L$-cube. If $\min\{|\C|, |\C^\prime|\}\ge c{2^{2r(L-\ell)}}$ for some constant $0<c\le 0.5$, then
    \begin{equation*}
        |\partial(\C,\C^\prime) |\ge \sqrt{c}{2^{r(L-\ell)}}.
    \end{equation*}
\end{lemma}
\begin{proof}
    By the isoperimetric inequality, $|\partial\C|\ge 4\sqrt{|\C|}$ and $|\partial\C^\prime|\ge 4\sqrt{|\C^\prime|}$. Notice that 
$|\partial\C|+|\partial\C^\prime|=2|\partial(\C,\C^\prime) |+|\partial(\C\cup\C^\prime) |$. In addition, since the union of the $\ell$-cubes in $\C\cup\C^\prime$ is an $L$-cube, we get that $|\partial(\C\cup\C^\prime) |=4\cdot 2^{r(L-\ell)}$. Therefore,
    \begin{align}
        |\partial(\C,\C^\prime) |& \geq 2\sqrt{|\C|}+2\sqrt{|\C^\prime|}-2\cdot 2^{r(L-\ell)}\nonumber\\&=2\cdot\frac{(\sqrt{|\C|}+\sqrt{|\C^\prime|}- 2^{r(L-\ell)})\cdot (\sqrt{|\C|}+\sqrt{|\C^\prime|}+ 2^{r(L-\ell)})}{\sqrt{|\C|}+\sqrt{|\C^\prime|}+ 2^{r(L-\ell)}}.\label{eq: iso step 1}
    \end{align} Note that $|\C|+|\C^\prime|=2^{2r(L-\ell)}$, we get that  
    \begin{equation}\label{eq: iso step 2}
        (\sqrt{|\C|}+\sqrt{|\C^\prime|}- 2^{r(L-\ell)})\cdot (\sqrt{|\C|}+\sqrt{|\C^\prime|}+ 2^{r(L-\ell)})=2\sqrt{|\C|\cdot|\C^\prime|}\ge\sqrt{2c}\cdot2^{2r(L-\ell)}.
    \end{equation}In addition, we have \begin{equation}\label{eq: iso step 3}
        \sqrt{|\C|}+\sqrt{|\C^\prime|}+ 2^{r(L-\ell)}\le  (\sqrt{2}+1)\cdot2^{r(L-\ell)}.
    \end{equation}Plugging \eqref{eq: iso step 2} and \eqref{eq: iso step 3} into \eqref{eq: iso step 1}, we 
  complete the proof of Lemma~\ref{lem: isoperimetric inequality for cubes}.
\end{proof}

\begin{lemma}\label{Lemma: Int_boundary_cubes}
    There exists a constant $b_8>0$ {depending only on $r$} such that for any $L$-cube $\mathrm{C}$ and any configuration $\sigma$, if $\min\{|\mathrm{C}^+(\sigma)|,|\mathrm{C}^-(\sigma)|\}= m$, then we have \begin{equation}\label{eq: 2d critical energy bound}
       {J(\mathrm{C}^+(\sigma), \mathrm{C}^-(\sigma))}\ge b_8 \sqrt{m+1}\ln(m+1).
        \end{equation}
\end{lemma}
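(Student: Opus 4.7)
The plan is to run a multi-scale argument that exploits Lemma \ref{Lemma: Large_int} together with a combinatorial dichotomy on how fast the coarse-grained volume decays. The dichotomy is what captures the extra $\ln(m)$ factor needed at the critical exponent $\alpha = 3$.

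Without loss of generality assume $|\mathrm{C}^+(\sigma)| = m \le |\mathrm{C}^-(\sigma)|$ and set $A = \mathrm{C}^+(\sigma)$. For $m$ below a fixed constant $m_0$ the bound holds trivially by choosing $b_8$ small, so assume $m \ge m_0$. Restrict all admissibility notions to $\ell$-cubes lying inside $\mathrm{C}$; since each such cube is majority $A$ or majority $A^c$, the restricted collections $\mathfrak{C}_\ell(A)$ and $\mathfrak{C}_\ell(A^c)$ partition the $\ell$-cubes of $\mathrm{C}$, with sizes $n_\ell^\pm$. Define
\[ g(\ell) := \min(n_\ell^+, n_\ell^-) \cdot 2^{2r\ell}, \]
so that $g(0) = m$ and $g(\ell) \to 0$ as $\ell$ grows. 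Let $Q_\ell$ denote the quantity of \eqref{Eq: Def_Q_ell} restricted to $\mathrm{C}$; by Lemma \ref{Lemma: No_overlap} applied inside $\mathrm{C}$, $J(\mathrm{C}^+, \mathrm{C}^-) \ge \sum_\ell Q_\ell$.

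Two lower bounds for $Q_\ell$ form the key inputs. Lemma \ref{lem: isoperimetric inequailty for cubes} applied to $(\mathfrak{C}_\ell(A), \mathfrak{C}_\ell(A^c))$ with $c = \min(n_\ell^+, n_\ell^-)/2^{2r(L-\ell)}$ gives $|\partial \mathfrak{C}_\ell(A)| \ge \sqrt{g(\ell)}/2^{r\ell}$, which combined with Lemma \ref{Lemma: Large_int} at $\alpha = 3$ (contributing $b_6 \cdot 2^{r\ell}$ per boundary pair) yields $Q_\ell \ge b_6 \sqrt{g(\ell)}$. Separately, the proof of \eqref{Eq: Prop.1.FFS.ii} at $\alpha = 3$ gives $|B_\ell(A) \triangle B_{\ell+1}(A)| \le b_7 \cdot 2^{r\ell} Q_\ell$, and the same bound holds with $A$ replaced by $A^c$ by the symmetry $Q_\ell(A) = Q_\ell(A^c)$; hence $|g(\ell) - g(\ell+1)| \le b_7 \cdot 2^{r\ell} Q_\ell$.

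The crux is a dichotomy with threshold $K := 1/(4r\log 2)$. In Case A, if $|\{\ell : g(\ell) \ge m/4\}| \ge K \log m$ then $\sum_\ell Q_\ell \ge b_6 \sum_{\ell : g(\ell) \ge m/4} \sqrt{g(\ell)} \ge (b_6 K/2)\sqrt{m}\log m$, and we are done. In Case B there exists $\ell^* \le K \log m$ with $g(\ell^*) < m/4$; telescoping $\sum_{\ell < \ell^*} b_7 \cdot 2^{r\ell} Q_\ell \ge |g(0) - g(\ell^*)| \ge 3m/4$ and using $2^{r\ell} \le 2^{r\ell^*} \le m^{rK\log 2} = m^{1/4}$ for $\ell < \ell^*$, we conclude
\[ \sum_\ell Q_\ell \ge \frac{3m}{4 b_7 \cdot m^{1/4}} = \frac{3}{4 b_7}\, m^{3/4} \ge b_8 \sqrt{m}\log m \]
for $m \ge m_0$ sufficiently large. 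The main obstacle is tuning $K$ so that both regimes in the dichotomy simultaneously produce the required $\Omega(\sqrt{m}\log m)$ lower bound: the isoperimetric bound is sharp when $g$ decays slowly (and one needs many scales to accumulate the $\log m$), while the total-variation bound is sharp when $g$ drops rapidly (and one needs few enough scales to avoid paying the factor $2^{r\ell^*}$); the choice $K = 1/(4r\log 2)$ balances the two.
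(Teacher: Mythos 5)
The proposal is correct and takes a genuinely different route from the paper. The paper proves the lemma by induction on $L$, running a four-way case analysis on the cube count $a_\ell = |\mathfrak{C}_\ell(\mathrm{C}^-(\sigma))|$ and invoking Karamata's inequality (via the concavity of $\sqrt{x}\ln x$) in the cases where $m$ is small relative to $|\mathrm{C}|$ or where the admissible cubes are sparse; the isoperimetric lemma together with Lemma \ref{Lemma: Large_int} is used only in the final case (iv). Your argument dispenses with the induction entirely: you track a single scalar $g(\ell) = 2^{2r\ell}\min(n_\ell^+, n_\ell^-)$ (the smaller coarse-grained volume), show $g(0)=m$, $g(L)=0$, and pair the isoperimetric input $Q_\ell \gtrsim \sqrt{g(\ell)}$ against the FFS-style telescoping bound $|g(\ell)-g(\ell+1)| \lesssim 2^{r\ell}Q_\ell$. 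The dichotomy on whether $g$ stays near $m$ for $\gtrsim \log m$ scales (where the isoperimetric bound accumulates the $\sqrt{m}\log m$) or drops fast (where telescoping the $2^{r\ell}Q_\ell$ sum with $2^{r\ell} \le m^{1/4}$ gives the even stronger $m^{3/4}$) replaces the paper's inductive cases (i)--(iii) in one step. This buys a cleaner, non-inductive argument at the price of handling a few bookkeeping points you left implicit: one should break ties in \eqref{eq: def of admissible cubes} so $\mathfrak{C}_\ell(A)$ and $\mathfrak{C}_\ell(A^c)$ genuinely partition the $\ell$-cubes of $\mathrm{C}$; the $\ell=0$ scale needs a direct bound since Lemma \ref{Lemma: Large_int} requires $\ell \ge 1$ (nearest-neighbor interactions give $Q_0 \ge \sqrt{m}$ anyway); the "trivial for $m<m_0$" claim uses that adjacent opposite spins always exist in a connected $\mathrm{C}$ with nonempty $\mathrm{C}^\pm$, and the dependence of $m_0$ on $b_8$ must be resolved in the correct order (fix the Case-A constant $b_8'=b_6K/2$ first, then pick $m_0$ so Case B matches it, then shrink $b_8$ for small $m$). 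With these glued in, the proof is complete.
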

\begin{proof}
   Our proof is based on an induction on $L$. If $L=1$, the desired result follows by taking $b_8>0$ small enough. Now we assume \eqref{eq: 2d critical energy bound} holds for any $\ell\le L-1$ and then prove the case for $\ell=L$. Without loss of generality one can assume $|\mathrm{C}^+(\sigma)|\ge |\mathrm{C}^-(\sigma)|$. Recall the definition of $\mathfrak{C}_\ell\left(\mathrm{C}^-(\sigma)\right)$ from \eqref{eq: def of admissible cubes}. Let $a_\ell\coloneqq|\mathfrak{C}_\ell\left(\mathrm{C}^-(\sigma)\right)|$.  We consider four different cases. \\

    \textit{Case (i): $m\le 2^{2r(L-1)-1}$}. We first write $\mathrm{C}$ as the disjoint union of $(L-1)$-cubes $\mathrm{C}_1,\mathrm{C}_2,\cdots , \mathrm{C}_{2^{2r}}$. Let $x_i=|\mathrm{C}_i^-(\sigma)|$, for $1\le i\le 2^{2r}$, be the number of minus spins in $\mathrm{C}_i$. Since $\sum_{i=1}^kx_i=m\le 2^{2r(L-1)-1}$, we must have $\min\{|\mathrm{C}_i^+(\sigma)|,|\mathrm{C}_i^-(\sigma)|\}=x_i$ for all $1\leq i\leq 2^{2r}$. Thus, we can apply the induction hypothesis to get 
    \begin{equation}\label{eq: coarse-graining L-1}
        J( \mathrm{C}^+(\sigma),  \mathrm{C}^-(\sigma))\ge \sum_{i=1}^{2^{2r}} J( \mathrm{C}_i^+(\sigma),  \mathrm{C}_i^-(\sigma)) \ge \sum_{i=1}^{2^{2r}} b_8 \sqrt{x_i+1}\ln(x_i+1).
    \end{equation}
   The desired result follows once we notice that $f(x)=\sqrt{x+1}\ln(x+1)$ is a concave function for $x\ge 0$, and thus we get from Karamata's inequality that $\sum_{i=1}^{2^{2r}} \sqrt{x_i+1}\ln(x_i+1)\ge \sqrt{m+1}\ln(m+1)$. \\

   \textit{Case (ii):  $m > 2^{2r(L-1)-1}$ and $a_\ell < \frac{m}{2^{2r\ell+1}}$ for some $\ell\le \frac{L}{2}$.} In this case, there are at least $\frac{m}{2}$ minuses contained in non-admissible $\ell$-cubes with respect to $\mathrm{C}$ (recall \eqref{eq: def of admissible cubes}). Let $\mathrm{B}_1,\cdots,\mathrm{B}_k$ be the $\ell$-cubes in $\mathrm{C}$ that are not in $\mathfrak{C}_\ell( \mathrm{C}^-(\sigma))$, and as in Case (i), let $x_i = |B_i^-(\sigma)|$, for $1\leq i\leq k$, be the number of minuses in those cubes. Then we get from the induction hypothesis that \begin{equation}\label{eq: coarse-graining ell}
        J( \mathrm{C}^+(\sigma),  \mathrm{C}^-(\sigma))\ge \sum_{i=1}^k J( \B_i^+(\sigma),   \B_i^-(\sigma))\ge \sum_{i=1}^k b_8 \sqrt{x_i+1}\ln(x_i+1).
    \end{equation} 
    Again we can use that $f(x)=\sqrt{x+1}\ln(x+1)$ is concave, and thus by Karamata's inequality we get that $$\sum_{i=1}^k \sqrt{x_i+1}\ln(x_i+1),~0\le x_i\le 2^{2r\ell-1}$$ reaches its minimum if at most one $x_i$ takes value in $(0,2^{2r\ell-1})$ and others have values either 0 or $2^{2r\ell-1}$. Thus we get that \begin{equation}\label{eq: minimum for concave function}
        \sum_{i=1}^k \sqrt{x_i+1}\ln(x_i+1)\ge \left\lfloor\frac{m}{2^{2r\ell}}\right\rfloor\cdot \sqrt{2^{2r\ell-1}+1}\ln(2^{2r\ell-1}+1)\ge m\cdot 2^{-r\ell-0.5}.
    \end{equation}
    Combining \eqref{eq: coarse-graining ell}, \eqref{eq: minimum for concave function} and the fact that $2^{2r(L-1)-1}\le m\le 2^{2rL-1}$, we get that \begin{equation*}
         J( \mathrm{C}^+(\sigma),  \mathrm{C}^-(\sigma))\ge m\cdot 2^{-r\ell-0.5}\ge \sqrt{m}\cdot 2^{rL-r\ell-1}\ge C_2\sqrt{m+1}\cdot \ln(m+1),
    \end{equation*}
    for some small absolute constant $C_2>0$, where the last transition used the assumption that $\ell \leq L/2$. This then completes the verification of \eqref{eq: 2d critical energy bound} (we may choose $b_8$ smaller than the absolute constant $C_2$ here). \\ 

    \textit{Case (iii): $m > 2^{2r(L-1)-1}$, $a_j \geq \frac{m}{2^{2rj+1}}$ for all $j \leq \frac{L}{2}$ and $a_\ell > 3\cdot 2^{2r(L-\ell)-2}$ for some $\ell\le \frac{L}{2}$.}  Let $\mathrm{D}_1,\cdots,\mathrm{D}_k$ be the $\ell$-cubes in  $\mathfrak{C}_\ell( \mathrm{C}^-(\sigma))$ with $x_i\coloneqq|\mathrm{D}_i^-(\sigma)|\le 3\cdot2^{2r\ell-2}$. Then we get that $m\ge k\cdot 2^{2r\ell-1}+(a_\ell-k)\cdot 3\cdot2^{2r\ell-2}$. Combining with the fact that $m \le 2^{2rL-1}$ and $a_\ell > 3\cdot 2^{2r(L-\ell)-2}$, we get that $k\ge 3a_\ell-\frac{m}{2^{2r\ell-2}}\ge
    2^{2r(L-\ell)-2}$. Applying induction hypothesis to $\mathrm{D}_i$ we conclude that \begin{equation*}
         J( \mathrm{C}^+(\sigma),  \mathrm{C}^-(\sigma))\ge \sum_{i=1}^k J( \mathrm{D}_i^+(\sigma),   \mathrm{D}_i^-(\sigma))\ge 2^{2r(L-\ell)-2}\cdot b_8 \sqrt{2^{2r\ell-2}+1}\ln(2^{2r\ell-2}+1).
    \end{equation*} 
    Since $\ell\le L/2$ and  $m\le \frac{2^{2rL}}{2}$, the desired lower-bound holds.\\
    
    \textit{Case (iv): $m > 2^{2r(L-1)-1}$ and $\frac{m}{2^{2r\ell+1}}\le a_\ell\leq 3\cdot 2^{2r(L-\ell)-2}$ for all $\ell\le \frac{L}{2}$.} In this case, we recall \eqref{Eq: Def_Q_ell} and we next prove that for any $\ell\le\frac{L}{2}$, we have $Q_\ell(\mathrm{C}^-(\sigma))\ge C_3 \sqrt{m}$ for some absolute constant $C_3>0$. Having assumed $m > 2^{2r(L-1)-1}$ and $\frac{m}{2^{2r\ell+1}} \leq a_\ell$, we can apply Lemma~\ref{lem: isoperimetric inequality for cubes} (since $\min\{a_\ell, 2^{2r(L-\ell)}-a_\ell\}\ge \frac{m}{2^{2r\ell+1}}$) to get that $|\partial\left(\mathfrak{C}_\ell\left(\mathrm{C}^-(\sigma)\right),\mathfrak{C}_\ell\left(\mathrm{C}^+(\sigma)\right)\right)| \geq \sqrt{\frac{m}{2^{2r\ell+1}}}$. Then, recalling \eqref{Eq: Def_Q_ell}, we get from Lemma~\ref{Lemma: Large_int} that 
    $$Q_\ell(\mathrm{C}^-(\sigma))\ge |\partial\left(\mathfrak{C}_\ell\left(\mathrm{C}^-(\sigma)\right), \mathfrak{C}_\ell\left(\mathrm{C}^+(\sigma)\right)\right)| \cdot b_2 2^{r\ell}\ge \sqrt{\frac{m}{2^{2r\ell+1}}}\cdot b_2 2^{r\ell}\ge C_3 \sqrt{m+1}.$$
    Summing over $0\le \ell\le \frac{L}{2}$ and combining with Lemma~\ref{Lemma: No_overlap} we conclude that $$J( \mathrm{C}^+(\sigma),  \mathrm{C}^-(\sigma))\ge \frac{L}{2}\cdot C_3 \sqrt{m+1}.$$ The desired result follows by noticing that $\ln(m+1)\le 2rL$ (and we may choose $b_8 \leq \frac{C_3}{4r}$). 
\end{proof}

We are ready to prove the main result of this section.

\begin{proof}[\textbf{Proof of Proposition \ref{prop: goal in 2d} for $\alpha=3$}.]

Recall \eqref{eq: union_bound_bad_event} and \eqref{Eq: Telescopic_Delta_1}. Recall the definition of $\fInt(n, Q)$ from \eqref{eq: def of Int n,Q}. Consider $\Int\in \fInt(n,Q)$ for $n\ge 1$ and $Q\ge 2$.
Given any pair $(\mathrm{C},\mathrm{C}^\prime)\in\partial\mathfrak{C}_\ell(\Int)$ (recall the definition of edge boundary from \eqref{eq: def of edge boundary}), there exists a cube $\mathrm{C}_1\subset \mathrm{C}\cup \mathrm{C}^\prime$ with side length $2^{r\ell}$ and satisfying $\min{\{|\mathrm{C}_1\cap \Int|, |\mathrm{C}_1\cap \Int^c|\}}\geq 2^{2r\ell - 2}$. 
Considering $\eta = \mathbbm{1}_{\Int} - \mathbbm{1}_{\Int^c}$ the configuration that is plus in $\Int$ and minus in $\Int^c$, we have $\min{\{|\mathrm{C}_1\cap \Int|, |\mathrm{C}_1\cap \Int^c|\}} = \min{\{|\mathrm{C}_1^+(\eta)|, |\mathrm{C}_1^-(\eta)|\}}$, and by Lemma \ref{Lemma: Int_boundary_cubes} we get that
\begin{equation}\label{eq: edge pair interaction}
    J(\Int)\geq J(\Int\cap \mathrm{C}_1, \Int^c\cap \mathrm{C}_1) =  J(\mathrm{C}_1^{+}(\eta), \mathrm{C}_1^-(\eta)) \geq \frac{b_8}{4}\ell2^{r\ell}.
\end{equation} Fixing $L \coloneqq L_Q$, where $L_Q$ is the largest integer $\ell$ satisfying $\frac{b_8}{4}\ell2^{r\ell}\leq Q$, by \eqref{eq: edge pair interaction} we have $B_{L+1}(\Int) = \emptyset$ for every $\Int\in \fInt(n,Q)$.  Let $\mathcal{H}_\ell = \mathcal{H}_{\ell, n, Q}$ be the event such that for all $\Int\in \fInt(n, Q)$, $|\Delta(\ell, \Int,h)|\leq  C_1\left( Q_\ell(\Int) + \frac{n}{(\ell+1)^{2}}  + \frac{Q}{(L-\ell + 1)^{2}} \right)$
with $C_1<0.1b_1(2\pi)^{-2}$. Lemma~\ref{Lemma: No_overlap} guarantees that $C_1\sum_{\ell = 0}^{L}(Q_{\ell}(\Int) + \frac{n}{(\ell+1)^2} + \frac{Q}{(L-\ell + 1)^{2}} ) \leq \frac{b_1 }{4}(n + Q)$, so by 
 \eqref{Eq: Telescopic_Delta_1} we have \begin{equation}
      \mathcal{E}^c \subset \cup_{\ell = 0}^{L} \mathcal{H}^c_\ell.\label{eq: Telescopic_Delta_2}
 \end{equation}
 For $0\leq \ell \leq L$, we wish to split $\Int\in\fInt(n, Q)$ according to $Q_\ell(\Int)$ (recall \eqref{Eq: Def_Q_ell}). To this end, we note that Lemma \ref{Lemma: Large_int} assures that $Q_\ell(\Int)\geq b_62^{r\ell}$ as long as $\mathfrak{C}_\ell(\Int)\neq \emptyset$. Thus, defining $\fInt_{\ell, k}(n,Q)\coloneqq \{ \Int\in \fInt(n, Q) : Q_\ell(\Int)\in [b_62^{r\ell}2^k, b_62^{r\ell}2^{k+1})\}$, we can bound
 
\begin{align}\label{Eq: P_H_ell}
    \mathbb{P}(\mathcal{H}^c_\ell)\leq \sum_{k= 0}^{K_\ell}\mathbb{P}\Bigg(& \sup_{\Int\in \fInt_{\ell, k}(n,Q)} |\Delta(\ell, \Int,h)|\geq  z_{\ell,k,n,Q}\Bigg).
\end{align}
where $z_{\ell,k,n,Q}=C_1\left( b_62^{r\ell}2^k + \frac{n}{(\ell+1)^{2}}  + \frac{Q}{(L-\ell + 1)^{2}}  \right)$, and $K_\ell$ is the smallest integer $K$ satisfying $\frac{b_8}{4}L2^{rL}< b_62^{\ell}2^{K + 1}$. By our choice of $L=L_Q$, we have $L2^{rL}>\frac{4}{b_8}Q$, and thus  $\fInt_{\ell, k}(n,Q)=\{\emptyset\}$ for $k>K_\ell$.

By \eqref{Eq: Prop.1.FFS.i}, we can apply Proposition \ref{Prop: Entropy_admissible_sets} with $M_{n, Q, \ell} = 2^{k + 1}$,  getting that \begin{equation}
    |B_{\ell}(\fInt_{\ell, k}(n,Q))|\cdot |B_{\ell+1}(\fInt_{\ell, k}(n,Q))|\leq \exp{\left\{C_2 \frac{\ell^{\kappa + 1} n}{2^{2r\ell}} + C_22^{k} \right\}}.\label{eq: Prop.1.FFS.ii application}
\end{equation}  Using Lemma~\ref{Lemma: Concentration.for.Delta.General}, \eqref{Eq: Prop.1.FFS.ii} and a union bound, we get that

\begin{align*}
&\mathbb{P}\Bigg( \sup_{\Int\in \fInt_{\ell, k}(n,Q)} |\Delta(\ell, \Int,h)| \geq z_{\ell,k,n,Q} \Bigg)\\
\leq~& |B_{\ell}(\fInt_{\ell, k}(n,Q))|\cdot |B_{\ell+1}(\fInt_{\ell, k}(n,Q))|\cdot \exp{\left\{-\frac{C_3[ 2^{r\ell} 2^k + n(\ell+1)^{-2} + Q(L-\ell + 1)^{-2}]^2}{\varepsilon^2 2^{2r\ell} 2^k}\right\}} \\
\leq~& |B_{\ell}(\fInt_{\ell, k}(n,Q))|\cdot |B_{\ell+1}(\fInt_{\ell, k}(n,Q))|\cdot \exp{\left\{ -\frac{C_4}{\varepsilon^2}\left[\frac{n}{(\ell+1)^{2}2^{r\ell}} + \frac{Q}{2^{r\ell}(L-\ell+1)^{2}} + {2^k}\right] \right\}}.
\end{align*}
Combined with \eqref{eq: Prop.1.FFS.ii application}, it yields 
\begin{align}
    &\sum_{k=0}^{K_\ell} \mathbb{P}\left( \sup_{\Int\in \fInt_{\ell, k}(n,Q)} |\Delta(\ell, \Int,h)| \geq z_{\ell,k,n,Q} \right)\nonumber\\
 \leq~&\sum_{k=0}^{K_\ell} \exp{\left\{C_2\frac{(\ell+1)^{\kappa + 1}n}{2^{2r\ell}} + C_2{2^k}\right\}}\exp{\left\{ -\frac{C_4}{\varepsilon^2}\left[\frac{n}{(\ell+1)^{2}2^{r\ell}} + \frac{Q}{2^{r\ell}(L-\ell+1)^{2}} + {2^k}\right] \right\}}\nonumber \\
 \leq~& \exp{\left\{ -\frac{C_5n}{\varepsilon^2(\ell+1)^{2}2^{r\ell}} - C_5\frac{Q}{\varepsilon^22^{r\ell}(L-\ell+1)^{2}}  \right\}}\sum_{k\geq 0}\exp{\left\{{-\frac{C_52^{k}}{\varepsilon^2}}\right\}} \nonumber \\
 \leq~& \exp{\left\{ -\frac{C_6n}{\varepsilon^2(\ell+1)^{2}2^{r\ell}} - C_6\frac{Q}{\varepsilon^22^{r\ell}(L-\ell+1)^{2}}  \right\}},\label{eq: ell level free energy diff bound critical}
\end{align}
where the second and the last inequalities hold for $\varepsilon$ small enough. Combining\eqref{eq: union_bound_bad_event}, \eqref{eq: Telescopic_Delta_2}, \eqref{Eq: P_H_ell} and \eqref{eq: ell level free energy diff bound critical},
we have 
\begin{align*}
     &\mathbb{P}\left(\mathcal{E}^c \right)\leq \sum_{Q\geq 2}\sum_{n\geq 1}\sum_{\ell = 0}^{L}\exp{\left\{ -\frac{C_6n}{\varepsilon^2(\ell+1)^{2}2^{r\ell}} - C_6\frac{Q}{\varepsilon^22^{r\ell}(L-\ell+1)^{2}}  \right\}} \\
    \leq~&  \sum_{Q\geq 2}\sum_{n\geq 1}\exp{\left\{- \frac{C_6n}{\varepsilon^2(L+1)^{2}2^{rL}}\right\}}\sum_{\ell = 0}^{L}\exp{\left\{ - C_6\frac{Q}{\varepsilon^22^{r\ell}(L-\ell+1)^{2}}\right\}}.
\end{align*}
Again by our choice of $L=L_Q$ we have $\ln(Q)2^{rL}\leq C_{7}Q$, for $C_{7}$ large enough. Thus, 
\begin{align}
   \sum_{\ell = 0}^{L}\exp{\left\{- C_6\frac{Q}{\varepsilon^22^{r\ell}(L-\ell+1)^{2}}  \right\}} 
   &\leq  \sum_{\ell = 0}^{L}\exp{\left\{- C_6C_7\frac{\ln(Q)\cdot 2^{r(L - \ell)}}{\varepsilon^2(L-\ell+1)^{2}}  \right\}}.\label{eq: 2d critical final calculation 1}
\end{align} Recalling that $r>4$, we have $\frac{2^{r(L - \ell)}}{(L-\ell+1)^{2}}\ge (L-\ell+1)$. Combined with \eqref{eq: 2d critical final calculation 1}, it yields that
\begin{align}
    \sum_{\ell = 0}^{L}\exp{\left\{- C_6\frac{Q}{\varepsilon^22^{r\ell}(L-\ell+1)^{2}}  \right\}}
    &\leq  \sum_{\ell = 0}^{L}\exp{\left\{- C_6C_7\frac{\ln(Q)\cdot (L-\ell+1)}{\varepsilon^2}  \right\}}\nonumber\\
   &=  \sum_{\ell = 0}^{L} Q^{ -\frac{C_6C_7(L-\ell+1)}{\vareps^2}} \leq C_{8}Q^{ -\frac{C_{8}}{\vareps^2}}.\nonumber
\end{align}
Moreover, one can easily bound 
\begin{align*}
    \sum_{n\geq 1}\exp{\left\{- \frac{C_6n}{\varepsilon^2(L+1)^{2}2^{rL}}\right\}} \leq \sum_{n\geq 1}\exp{\left\{- \frac{C_{9}n}{\varepsilon^2\ln(Q)^{2}Q}\right\}} \leq C_{10} Q (\ln Q)^2,
\end{align*}
 and we conclude that $  \mathbb{P}\left(\mathcal{E}^c \right) \leq\sum_{Q\geq 2} C_{8}C_{10}Q^{1 - C_{8}/\varepsilon^2}(\ln Q)^2$, and the desired result follows by taking $\varepsilon$ large enough. 
\end{proof}

\section*{Acknowledgements}
J. Ding is supported by NSFC Tianyuan Key Program Project No.12226001, NSFC Key Program Project No.12231002, and by New Cornerstone Science Foundation through the XPLORER PRIZE. F. Huang is supported by the Beijing Natural Science Foundation Undergraduate Initiating Research Program (Grant No. QY23006). We thank Lucas Affonso for discussions during his visit to Peking University. 

\bibliographystyle{habbrv} 
\bibliography{bib.bib} 

\end{document}